\crefname{hypothesis}{Hypothesis}{Hypotheses}
\title{{Subgradient Regularized} Multivariate Convex Regression at Scale}
\author{Wenyu Chen\thanks{MIT Operations Research Center 
  (\email{wenyu@mit.edu}).}
\and Rahul Mazumder\thanks{MIT Sloan School of Management, Operations Research Center and MIT Center for Statistics
  (\email{rahulmaz@mit.edu}).}
}
\newcommand{\B}{\boldsymbol}
\newcommand{\M}{\boldsymbol}
\renewcommand{\vec}{\mathrm{vec}}
\newcommand{\norm}[1]{\lVert#1\rVert}
\newcommand{\E}{\mathbb{E}}
\newcommand{\R}{\mathbb{R}}
\newcommand{\code}[1]{{\ttfamily #1}}
\renewcommand{\emph}{\it}
\begin{document}

\maketitle

\begin{abstract}
We present new large-scale algorithms for fitting a {subgradient regularized} multivariate convex regression function to $n$ samples in $d$ dimensions---a key problem in shape constrained nonparametric regression with applications in statistics, engineering and the applied sciences. 
The infinite-dimensional learning task can be expressed via a convex quadratic program (QP) with $O(nd)$ decision variables and $O(n^2)$ constraints. While instances with $n$ in the lower thousands can be addressed with current algorithms within reasonable runtimes, solving larger problems (e.g., $n\approx 10^4$ or $10^5$) is computationally challenging. 
To this end, we present an active set type algorithm on the dual QP. For computational scalability, we allow for approximate optimization of the reduced sub-problems; and propose randomized augmentation rules for expanding the active set.
{{We derive novel computational guarantees for our algorithms.}}
We demonstrate that our framework can {approximately} solve instances of the subgradient regularized convex regression problem with $n=10^5$ and $d=10$ within minutes; and shows strong computational performance compared to earlier approaches.  
\end{abstract}

\begin{keywords}
  convex regression, nonparametric shape restricted regression, quadratic programming, large scale convex optimization, numerical optimization, computational guarantees.  
\end{keywords}

\begin{AMS}
90C06, 	90C20, 	90C25, 	90C30, 62-08, 	62Gxx	
\end{AMS}

\section{Introduction}\label{sec:intro}
{Given $n$ samples $(y_{i}, \M{x}_{i})$, $i \in [n]:=\{1, \ldots, n\}$ with response $y_{i} \in \mathbb{R}$ and 
covariates $\M{x}_{i} \in \mathbb{R}^d$, we consider the task of predicting $y$ using a convex 
function of $\M{x}$. This convex function is unknown and needs to be estimated from the data. This leads rise to the so-called multivariate {\emph{convex regression}} problem~\cite{seijo2011nonparametric,lim2012consistency} where we minimize the sum of squared residuals
\begin{equation}\label{convex-infinite-dim-est}
\hat{\varphi} = \arg\min_{\varphi\in \mathcal{F}}~\frac{1}{n}\sum_{i=1}^n(y_i-\varphi(\bm{x}_i))^2
\end{equation}
over all real-valued convex functions in $\mathbb{R}^d$, denoted by $\mathcal{F}$.
Above, $\varphi(\bm{x}_i)$ is the value of the convex function $\varphi$ at point $\M{x}_{i}$.

In the special case where $\varphi(\bm{x})=\M{x}^T\B\beta$ is a linear function, with unknown regression coefficients $\B\beta$, criterion~\eqref{convex-infinite-dim-est} leads to the well-known least squares problem. Problem~\eqref{convex-infinite-dim-est} is an instance of shape constrained nonparametric regression~\cite{robertson1988order}---here we learn the underlying function $\varphi$ under a qualitative shape constraint such as convexity. The topic of function estimation under shape constraints has received significant attention in recent years---see for example, the special issue in {\it Statistical Science}~\cite{samworth2018special} for a nice overview.}

Convex regression is widely used in economics, operations research, statistical learning and engineering applications. In economics applications, for example, convexity/concavity arise in modeling utility and production functions, consumer preferences~\cite{varian1982nonparametric,johnson2018shape}, among others.
In some stochastic optimization problems, value functions are taken to be convex~\cite{shapiro2009lectures}. See also the works of~\cite{hannah2013multivariate,wang2013estimating,balazs2016convex} for other important applications of convex regression.

There is a rich body of work in statistics studying different (statistical) methodological aspects of convex 
regression~\cite{seijo2011nonparametric,lim2012consistency,balazs2016convex,han2016multivariate,han2019convergence,kur2019optimality}. 
However, the challenges associated with computing the convex regression estimator limit our empirical understanding of this estimator and its usage in practice. More recently, there is a growing interest in developing efficient algorithms for this optimization problem---see for e.g.~\cite{mazumder2018computational,aybat2016parallelizable,bertsimas2021sparse,lin2020efficient}. The focus of the current paper is to further advance the computational frontiers of convex regression.

We note that the infinite-dimensional Problem~\eqref{convex-infinite-dim-est} can be reduced~\cite{seijo2011nonparametric,mazumder2018computational} to a finite dimensional convex quadratic program (QP):
\begin{equation}\label{eqn:nonreg}
\begin{aligned}
    &\underset{\phi_1,\ldots,\phi_n;\bm{\xi}_1,\ldots,\bm{\xi}_n}{\text{minimize}}&&\frac{1}{n}\sum_{i=1}^n(y_i-\phi_i)^2\\
    &\quad\,\text{s.t.}&& \phi_j-\phi_i\geq \langle \bm{x}_j-\bm{x}_i,\bm{\xi}_i\rangle,\quad \forall (i,j)\in\Omega
    \end{aligned}
\end{equation}
where $\phi_1,\ldots,\phi_n\in\R$, $\B\xi_1,\ldots,\B\xi_n\in\R^d$,  and $\Omega:=\{(i,j):1\leq i,j\leq n,i\neq j\}$. In~\eqref{eqn:nonreg}, $\phi_{i}=\varphi(\bm{x}_i)$ and $\bm\xi_i$ is a subgradient of $\varphi(\bm{x})$ at $\bm x = \bm x_i$. 
Problem~\eqref{eqn:nonreg} has $O(nd)$ variables and $O(n^2)$ constraints and becomes computationally challenging when $n$ is large.  
For the convex regression problem to be statistically meaningful~\cite{kur2019optimality,mazumder2018computational}, we consider cases with $n \gg d$ (and number of features $d\sim \log n$ to be small). Off-the-shelf interior point methods~\cite{seijo2011nonparametric} for~\eqref{eqn:nonreg} are limited to instances where $n$ is at most a few hundred. 
\cite{aybat2016parallelizable} consider a regularized version of~\eqref{eqn:nonreg} (i.e.,~\eqref{eqn:origin}, below) and propose parallel algorithms to solve instances with $n\approx 1,600$ leveraging commercial QP solvers (like Mosek). 
\cite{mazumder2018computational} use an alternating direction method of multipliers (ADMM)~\cite{boyd-admm1}-based algorithm
that can address problems up to $n \approx 3,000$.
Recently,~\cite{lin2020efficient} propose a different ADMM method and also a proximal augmented Lagrangian method  where the subproblems are solved by the semismooth Newton method---they address instances with $n\approx 3,000$. 
Algorithms based on nonconvex optimization have been proposed to learn convex functions that are representable as a piecewise maximum of $k$-many hyperplanes~\cite{hannah2013multivariate,balazs2016convex,ghosh2019max}---these are interesting approaches, but they may not lead to an optimal solution
for the convex regression convex optimization problem.

Recently,~\cite{bertsimas2021sparse} present a cutting plane or constraint generation-type algorithm for~\eqref{eqn:nonreg}: At every iteration, they solve a reduced QP by considering a subset of constraints. Leveraging capabilities of commercial solvers (e.g., Gurobi), this can approximately solve instances of~\eqref{eqn:nonreg} with $n\approx 10^4$-$10^5$. In this paper, we also present an active-set type method, but our approach differs from~\cite{bertsimas2021sparse}, as we discuss below. We also establish novel computational guarantees for our proposed approach.

The convex least squares estimator~\eqref{eqn:nonreg} may lead to undesirable statistical properties when $\bm{x}$ is close to the boundary of the convex hull of $\{\B{x}_{i}\}_{1}^{n}$~\cite{mazumder2018computational,balabdaoui2007consistent}. 
This problem can be improved 
by considering a subgradient regularized version of~\eqref{eqn:nonreg} given by:
\begin{align}
\begin{aligned}
    &\underset{\phi_1,\ldots,\phi_n;\bm{\xi}_1,\ldots,\bm{\xi}_n}{\text{minimize}}&&\frac{1}{n}\sum_{i=1}^n(y_i-\phi_i)^2+\frac{\rho}{n}\sum_{i=1}^n\norm{\bm{\xi}_i}^2 \\
    &\quad\,\text{s.t.}&& \phi_j-\phi_i\geq \langle \bm{x}_j-\bm{x}_i,\bm{\xi}_i\rangle,\quad \forall (i,j)\in\Omega
    \end{aligned}
    \tag{\mbox{$P_0$}}\label{eqn:origin}
\end{align}
where, we impose an additional $\ell_2$-based regularization on the subgradients $\{\bm\xi_{i}\}_{1}^n$ of $\varphi$; {{with
$\rho > 0$}} being the regularization parameter. Formulation~\eqref{eqn:origin} also appears in~\cite{aybat2016parallelizable}. Statistical estimation error properties of a form of subgradient regularized convex least squares estimator appear in~\cite{mazumder2018computational}. {{In the special case where $\varphi(\bm{x})=\M{x}^\top\B\beta$ is a linear function,~\eqref{eqn:origin} leads to the popular ridge regression estimator (i.e., least squares with an additional squared $\ell_2$ penalty on $\B\beta$). In ridge regression a nonzero penalty on $\|\B\beta\|_{2}^2$
often leads to improved statistical performance over vanilla least squares. Similarly, in convex regression, a value of $\rho>0$ can result in better statistical estimation error compared to the unregularized estimator with $\rho=0$---See Section~\ref{sec:experiments} for numerical support on a collection of datasets.}}

\medskip

\noindent {\bf Our approach:} 
In this paper, we focus on solving~\eqref{eqn:origin} for $\rho>0$. Problem~\eqref{eqn:origin} has a strongly convex objective function in the decision variables $(\{\phi_{i}\}_{1}^n,\{\bm\xi_{i}\}_{1}^n)$ --- its Lagrangian dual (see \eqref{eqn:main-dual} below) is a convex QP with $O(n^2)$ variables over the nonpositive orthant.
We present large scale algorithms for this dual and study their computational guarantees. 
The large number of variables poses computational challenges for full-gradient-based optimization methods as soon as $n$ becomes larger than a few thousand.
However, we anticipate that $O(n)$-many of the constraints in~\eqref{eqn:origin} will suffice. We draw inspiration from the one dimensional case ($d=1$), an observation that was also used by~\cite{bertsimas2021sparse}. Hence, we use methods inspired by constraint generation~\cite{bertsimas1997introduction}, which we also refer to (with an abuse of terminology) as active set type methods~\cite{bertsekas1997nonlinear}. Every step of our algorithm considers a reduced dual problem where the decision variables,  informally speaking,  correspond to a subset of the primal constraints.
The vanilla version of this active set method, which solves the reduced dual problem to {\emph{optimality}}\footnote{This is similar to the method of~\cite{bertsimas2021sparse} who consider a constraint generation method for~\eqref{eqn:nonreg} where the reduced sub-problems are solved to optimality.}, becomes expensive when $n$ and/or the size of the active set becomes large especially if one were to perform several active-set iterations. 
We propose improved algorithms  
that perform inexact (or approximate) optimization for this reduced problem initially and then increase the optimization accuracy at a later stage.
Upon solving the reduced problem (exactly or inexactly), we examine 
optimality conditions for the full problem; 
and include additional 
variables into the dual problem, if necessary. 

To augment the current active-set, greedy deterministic augmentation rules that scan all $O(n^2)$-constraints, become computationally expensive---therefore, we use randomized rules, which leads to important cost savings. These randomized augmentation rules extend the random-then-greedy selection strategies proposed by~\cite{lu2018randomized} in the context of Gradient Boosting Machines~\cite{friedman2001greedy}. 
Our approach operates on the dual and results in a dual feasible solution---we show how this leads to a primal feasible solution, delivering a duality gap certificate.

We establish a novel linear convergence rate of our algorithm {(in terms of outer iterations)} on the dual, which is not strongly convex. Our guarantees apply to both exact/inexact optimization of the reduced problem; and both deterministic and randomized augmentation rules. 
As we focus on large scale problems (e.g. $n\geq 10,000$),
inexact optimization of the reduced sub-problem and randomized augmentation rules play a key role in computational efficiency. As we carefully exploit problem-structure, our standalone algorithms enjoy a low memory footprint and can approximately solve instances of subgradient regularized convex regression  with $n \approx 10^5$ and $d=10$ in minutes.
Numerical comparisons suggest that on several datasets, our approach appears to notably outperform earlier algorithms in solving~\eqref{eqn:origin}
for values of $\rho>0$ that result in good statistical performance. For larger scale instances, our approach is better suited for finding solutions with low to medium accuracy. Since our approach is based on the smooth dual of~\eqref{eqn:origin}, the performance of our algorithm would deteriorate when $\rho$ is numerically very close to zero. In particular, our approach may not be suitable for obtaining a high-accuracy solution to the unregularized problem~\eqref{eqn:nonreg}.

\medskip

\noindent {\bf Organization of paper:} \Cref{sec:problem} presents both primal and dual formulations of the full problem~\eqref{eqn:origin}; and a first order method on the dual.
\Cref{sec:algo} presents active-set type algorithms, augmentation rules and associated computational guarantees. \Cref{sec:dualgap} discusses computing duality gap certificates. 
\Cref{sec:experiments} presents numerical experiments.
Some technical details are relegated to \Cref{app:proofs} to improve readability.

\medskip

\noindent {\bf Notations:} For convenience, we list some notations used throughout the paper.
We denote the set $\{1,2,\ldots, n\}$ by 
$[n]$. The cardinality of a set $W$ is denoted by $|W|$. We denote by $\mathbb{R}_+$, $\mathbb{R}_{++}$ the set of nonnegative and positive real numbers (respectively). A similar notation applies for $\mathbb{R}_{-}$ and $\mathbb{R}_{--}$.
Symbols $\bm{1}_n$, $\bm{e}_i$ and $\bm{I}$ denote: a vector of length $n$ of all ones, the $i$-th standard basis element and the identity matrix (respectively). 
$\mathrm{span}(A)$ denotes the linear space generated by the vectors in the set $A$. For a matrix $\bm{B}$, let $\vec(\bm{B})$ denote a vectorized version of $\bm{B}$.
The largest singular value of a matrix $\M{B}$ is denoted by $\lambda_{\max}(\bm{B})$.
We use $\norm{\cdot}$ to denote the Euclidean norm of a vector and the spectral norm of a matrix. 
Finally, $\partial f(\bm{x})$ denotes {the subdifferential (set of subgradients)} of $f$ at $\bm{x}$. 

\section{Primal and Dual Formulations} \label{sec:problem}
We introduce some notation to rewrite Problem~\eqref{eqn:origin} compactly. Let $\bm{y}=[y_1,\ldots,y_n]^\top\in\mathbb{R}^n$, $\bm{X}=[\bm{x}_1^\top,\ldots,\bm{x}_n^\top]$ $\in\mathbb{R}^{n\times d}$, $\bm{\phi}=[\phi_1,\ldots,\phi_n]^\top\in\mathbb{R}^n$, and $\bm{\xi}=\vec([\bm{\xi}_1,\ldots,\bm{\xi}_n])\in\mathbb{R}^{nd}$. We define $\bm A\in\R^{n(n-1)\times n}$ and $\bm B\in\R^{n(n-1)\times nd}$ such that the rows of $\bm A\bm\phi+\bm B\bm\xi$ correspond to the constraints $\phi_j-\phi_i-\langle\bm x_j-\bm x_i,\bm\xi_i\rangle$ for $(i,j) \in \Omega$. 
Hence \eqref{eqn:origin} is equivalent to:
\begin{align}\label{eqn:main-primal}
\begin{aligned}
    &\underset{\bm{\phi},\bm{\xi}}{\text{minimize}}&&f(\bm{\phi},\bm{\xi}):=\frac{1}{2}\norm{\bm{y}-\bm{\phi}}^2+\frac{\rho}{2}\norm{\bm{\xi}}^2~~~~~\text{s.t.}& &\bm{A\phi}+\bm{B\xi} \geq \M{0},
    \end{aligned}
    \tag{$P$}
\end{align}
where, $\bm{A\phi}+\bm{B\xi} \geq \M{0}$ denotes componentwise inequality.
Due to strong convexity of the objective,~\eqref{eqn:main-primal} has a unique minimizer denoted by $(\bm{\phi}^\star,\bm{\xi}^\star)$. While~\eqref{eqn:main-primal} has a large number (i.e., $O(n^2)$) of constraints, given that the affine hull of the data points has full dimension\footnote{This occurs with probability one if the covariates are drawn from a continuous distribution.}, i.e. $\mathrm{span}(\{\bm x_i-\bm x_j\}_{j\neq i})=\R^d$, it can be shown that the constraint matrix $\bm{C}=\begin{bmatrix}\bm{A}&\bm{B}\end{bmatrix}\in\mathbb{R}^{n(n-1)\times(n+nd)}$ is of rank $O(nd)$. 
This serves as a motivation for our active-set approach.

The Lagrangian dual of~\eqref{eqn:main-primal} is equivalent to the following convex problem
\begin{equation}L^\star  = \underset{\bm{\lambda}\in\mathbb{R}^{n(n-1)}}{\mathrm{minimize}}\quad L(\bm{\lambda}):=\frac{1}{2\rho}\bm{\lambda}^\top(\rho\bm{AA}^\top+\bm{BB}^\top)\bm{\lambda}- \bm{y}^\top\bm{A}^\top\bm{\lambda} \quad \mathrm{s.t.}\quad\bm{\lambda}\leq 0.\tag{$D$}\label{eqn:main-dual}\end{equation}
{\begin{definition}
A convex function $f$ is $\sigma$-smooth if it is continuously differentiable with $\sigma$-Lipschitz gradient; $f$  is $\mu$-strongly convex if $f(\bm{x})-\frac{\mu}{2}\bm{x}^\top\bm{x}$ is convex. 
\end{definition}}
Note that $\B\lambda \mapsto L(\bm{\lambda})$ is not strongly convex, but it is $\sigma$-smooth, where $\sigma$ is the maximum eigenvalue of the matrix $\bm Q:=\bm A\bm A^\top+\frac{1}{\rho}\bm B\bm B^\top$.

Unlike the primal~\eqref{eqn:main-primal}, 
the dual problem~\eqref{eqn:main-dual} is amenable to proximal 
gradient methods~\cite{nesterov2018lectures,beck2009fast} (PGD).
Other gradient based methods like accelerated proximal gradient methods (APG) \cite{beck2009fast,nesterov2018lectures}, the limited-memory Broyden-Fletcher-Goldfarb-Shanno (L-BFGS) method~\cite{liu1989limited} (for example), may also be used; and they work well in our numerical experience.  

However, every iteration of PGD requires computing the gradient $\nabla L(\M{\lambda}) \in \mathbb{R}^{n(n-1)}$. While an unstructured gradient computation will cost $O(n^4)$, exploiting the structure of $\M{A}, \M{B}$, this cost can be reduced to $O(n^2d)$, 
allowing us to scale these algorithms for instances with $n \approx 3,000$. 
Such matrix-vector multiplications can be used to estimate $\sigma$ via the power method or backtracking line-search~\cite{beck2009fast}. 
When $L$ is $\sigma$-smooth, PGD enjoys a standard sublinear convergence rate $O(1/t)$~\cite{beck2009fast}. With an additional strong convexity 
assumption PGD is known to converge at a linear rate \cite{nesterov2013gradient}. Note $L(\bm\lambda)$ is not strongly convex.
However, $L(\bm\lambda)$ satisfies the Polyak-{\L}ojasiewicz condition \cite{karimi2016linear,necoara2018linear} under which PGD converges at a linear rate. We note more general convergence results under error bound conditions can be found in~\cite{luo1992linear,luo1993error,li1995error, bolte2017error}.

\section{Active Set Type Algorithms}\label{sec:algo}
As~\eqref{eqn:main-dual} has $O(n^2)$ variables, the proximal gradient method (owing to full gradient computations) becomes prohibitively expensive when $n$ becomes larger than a few thousand. 
However, as discussed earlier, we {expect only} $O(n)$-many of these variables to be nonzero at an optimal solution---motivating the use of
a {constraint-generation/active set-type} method on the primal~\eqref{eqn:main-primal}, which relates to a column-generation type method on the dual~\eqref{eqn:main-dual}.

Constraint generation is traditionally used in the context of solving large-scale linear programs~\cite{dantzig1960decomposition,bertsimas1997introduction}. When used in the context of the QP~\eqref{eqn:main-primal}, as done in~\cite{bertsimas2021sparse}, we start with a reduced problem 
with a small subset of constraints in~\eqref{eqn:main-primal}. With a slight abuse of nomenclature, we refer to these constraints as the active set\footnote{Our usage of ``active set'' differs from the active set method for solving a QP, as discussed in~\cite{nocedal2006numerical}}.
After obtaining an optimal dual solution to this reduced problem, the traditional form of constraint generation will augment the active set with some dual variables that correspond to the violated primal constraints (if any) and re-solve the problem on the expanded set of constraints. 
We mention two shortcomings of this approach:
(a) Solving the reduced problem to optimality becomes expensive (especially when the active set becomes large and/or if several iterations of constraint-generation is needed); and (b)~finding variables to be appended to the active set has a large cost of $O(n^2d)$ operations.

To circumvent these shortcomings, we propose modifications to the above constraint generation or active set method. To address (a), we solve the reduced sub-problem inexactly (e.g., by taking a few iterations of the proximal gradient method). To address (b), we consider randomized rules to reduce the cost of augmenting the active set from $O(n^2d)$ to $O(nd)$ (for example).
We show that our proposed algorithm converges; and does so with a linear convergence rate in the outer iterations.

\subsection{Properties of the reduced problem}\label{subsec3:properties}

Let $W \subseteq \Omega$ index a subset of the constraints in~\eqref{eqn:main-primal}; and consider the reduced primal:
\begin{align}
\begin{aligned}
&\underset{\bm{\phi},\bm{\xi}}{\text{minimize}}&&f(\bm{\phi},\bm{\xi})=\frac{1}{2}\lVert\bm{y}-\bm{\phi}\rVert^2+\frac{\rho}{2}\lVert\bm{\xi}\rVert^2~~~~~\text{s.t.}&&\bm{A}_W\bm{\phi}+\bm{B}_W\bm{\xi}\geq \M{0}
\end{aligned}
\tag{\mbox{$P_W$}}
\label{eqn:relaxed-primal}
\end{align}
where,  $\bm{A}_W$ (and $\B{B}_{W}$) denotes matrix $\M{A}$ (and $\M{B}$) restricted to rows indexed by $W$.

{In the rest of the paper, we will use $W$ as a subscript for vectors or matrices whose size changes with $W$, and use $W$ (or $[W]$) as superscript for vectors or matrices whose size does not change with $W$. When $W=\Omega$, the relaxed problem is the original problem, and we drop the use of $\Omega$ as 
subscript and/or superscript for notational convenience.} \\
We consider solving the dual of~\eqref{eqn:relaxed-primal}.
\Cref{prop:relaxed-dual-char} presents some of its properties.
\begin{proposition}\label{prop:relaxed-dual-char}
The Lagrangian dual of \eqref{eqn:relaxed-primal} is given by:
\begin{equation}\label{eqn:relaxed-dual}
   \underset{\bm{\lambda}_W}{\mathrm{min}}~~ L_W(\bm{\lambda}_W):=\tfrac{1}{2\rho}\bm{\lambda}_W^\top(\rho\bm{A}_W\bm{A}_W^\top+\bm{B}_W\bm{B}_W^\top)\bm{\lambda}_W- \bm{y}^\top\bm{A}_W^\top\bm{\lambda}_W ~~\mathrm{s.t.}~~\bm{\lambda}_W\leq \M{0}\tag{\mbox{$D_W$}}.
\end{equation}
Let  $L_W^\star$ be the optimal objective value for~\eqref{eqn:relaxed-dual}.
The objective function $L_W(\cdot):\mathbb{R}^{|W|}\to\mathbb{R}$ is $\sigma_W$-smooth for some $\sigma_W\leq \sigma$. The set of all optimal solutions to \eqref{eqn:relaxed-dual} is a polyhedron of the form
\begin{equation}\Lambda^{\star}_W=\left\{\bm{\lambda}_W\in\mathbb{R}^{|W|}:\bm{A}_W^\top\bm{\lambda}_W=\bm{s}_A^W,\frac{1}{\sqrt{\rho}}\bm{B}_W^\top\bm{\lambda}_W=\bm{s}_B^W,\bm{\lambda}_W\leq 0\right\}\label{eqn:opt-dual-poly-W}\end{equation}
where, $\bm{s}_A^W = \bm{y}-\bm{\phi}^W_\star,\bm{s}_B=-\sqrt{\rho}\bm{\xi}^W_\star$; and $(\bm{\phi}_\star^W,\bm{\xi}_\star^W)$ is the unique optimal solution to the reduced
primal problem~\eqref{eqn:relaxed-primal}.
\end{proposition}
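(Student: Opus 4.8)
The plan is to derive each claim in \Cref{prop:relaxed-dual-char} from standard Lagrangian duality applied to the strongly convex QP~\eqref{eqn:relaxed-primal}, treating the general $W$ case exactly as the full problem $\Omega$ was handled to obtain~\eqref{eqn:main-dual}. First I would form the Lagrangian $\mathcal{L}(\bm\phi,\bm\xi,\bm\lambda_W) = \frac12\norm{\bm y-\bm\phi}^2 + \frac{\rho}{2}\norm{\bm\xi}^2 - \bm\lambda_W^\top(\bm A_W\bm\phi + \bm B_W\bm\xi)$ with multiplier $\bm\lambda_W \geq \M0$ (the sign convention in~\eqref{eqn:main-dual} flips this to $\bm\lambda_W \le \M0$, which I would just track consistently). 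Since the objective is strongly convex in $(\bm\phi,\bm\xi)$ and the constraints are affine, Slater's condition holds trivially (e.g. $\bm\phi$ large positive, $\bm\xi=\M0$ is strictly feasible when the $\bm x_i$ are distinct), so strong duality holds and the dual is exactly $\min_{\bm\lambda_W \le \M0} \; \min_{\bm\phi,\bm\xi} \mathcal{L}$. Minimizing out $\bm\phi$ gives $\bm\phi = \bm y - \bm A_W^\top\bm\lambda_W$ (with the appropriate sign) and minimizing out $\bm\xi$ gives $\bm\xi = -\frac{1}{\rho}\bm B_W^\top\bm\lambda_W$; substituting back and simplifying produces the quadratic $L_W(\bm\lambda_W)$ displayed in~\eqref{eqn:relaxed-dual}. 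This is a routine ``complete the square'' computation.

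Next, for the smoothness claim: $\nabla^2 L_W = \frac{1}{\rho}(\rho\bm A_W\bm A_W^\top + \bm B_W\bm B_W^\top) = \bm A_W\bm A_W^\top + \frac1\rho \bm B_W\bm B_W^\top =: \bm Q_W$, so $\sigma_W = \lambda_{\max}(\bm Q_W)$. The inequality $\sigma_W \le \sigma$ follows because $\bm Q_W$ is the principal submatrix of $\bm Q = \bm A\bm A^\top + \frac1\rho\bm B\bm B^\top$ indexed by the rows/columns in $W$, and a principal submatrix of a PSD matrix has largest eigenvalue bounded by that of the full matrix (Cauchy interlacing, or simply $\bm v^\top \bm Q_W \bm v = \tilde{\bm v}^\top \bm Q \tilde{\bm v} \le \sigma\norm{\tilde{\bm v}}^2 = \sigma\norm{\bm v}^2$ for the zero-padded extension $\tilde{\bm v}$).

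For the characterization~\eqref{eqn:opt-dual-poly-W} of the optimal dual set, the key observation is that $L_W(\bm\lambda_W)$ depends on $\bm\lambda_W$ only through the linear images $\bm A_W^\top\bm\lambda_W$ and $\bm B_W^\top\bm\lambda_W$: indeed $L_W(\bm\lambda_W) = \frac12\norm{\bm A_W^\top\bm\lambda_W}^2 + \frac{1}{2\rho}\norm{\bm B_W^\top\bm\lambda_W}^2 - \bm y^\top(\bm A_W^\top\bm\lambda_W)$, which is a strictly convex function of the pair $(\bm A_W^\top\bm\lambda_W, \bm B_W^\top\bm\lambda_W)$. Hence the optimal value of this pair is unique; I would then invoke the KKT stationarity/primal recovery map from the duality argument above, which gives $\bm A_W^\top\bm\lambda_W^\star = \bm y - \bm\phi_\star^W$ and $\bm B_W^\top\bm\lambda_W^\star = -\rho\,\bm\xi_\star^W$ at any dual optimum, with $(\bm\phi_\star^W,\bm\xi_\star^W)$ the unique primal solution (unique by strong convexity of $f$). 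Therefore $\Lambda_W^\star$ is exactly the set of $\bm\lambda_W \le \M0$ achieving these fixed linear images, which is the stated polyhedron (after rescaling $\bm B_W$ by $\frac{1}{\sqrt\rho}$ to match the normalization in the statement, so $\bm s_B^W = -\sqrt\rho\,\bm\xi_\star^W$). Conversely any $\bm\lambda_W$ in that polyhedron has the same objective value $L_W^\star$ and is feasible, hence optimal.

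I expect the only mild obstacle to be bookkeeping: keeping the sign convention ($\bm\lambda \le \M0$ versus $\ge \M0$) and the $\sqrt\rho$ rescaling of $\bm B_W$ consistent between the Lagrangian computation and the final stated form, and being careful that ``$L_W$ is strictly convex in the images'' is what upgrades ``the set of optimal images is a point'' to ``$\Lambda_W^\star$ is a clean affine slice of the orthant.'' Everything else is textbook convex duality for a strongly convex QP, so I would keep the write-up brief and point to the fact that this mirrors the derivation of~\eqref{eqn:main-dual} in \Cref{sec:problem} verbatim with $\Omega$ replaced by $W$.
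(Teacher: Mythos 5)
Your proposal is correct and follows essentially the same route as the paper's own proof sketch: Lagrangian minimization over $(\bm\phi,\bm\xi)$ to obtain \eqref{eqn:relaxed-dual}, the principal-submatrix (zero-padding) argument for $\sigma_W\leq\sigma$, and the KKT/primal-recovery identities---sharpened by your observation that $L_W$ is strictly convex in the images $(\bm A_W^\top\bm\lambda_W,\bm B_W^\top\bm\lambda_W)$---to get the polyhedron \eqref{eqn:opt-dual-poly-W}. One small caveat: your proposed Slater point ($\bm\xi=\M{0}$ with $\bm\phi$ chosen freely) is not strictly feasible, since with $\bm\xi_i=\M{0}$ the constraints for $(i,j)$ and $(j,i)$ force $\phi_i=\phi_j$; strong duality still holds because the constraints are affine and feasible (or use the strictly feasible point $\phi_i=\tfrac12\norm{\bm x_i}^2$, $\bm\xi_i=\bm x_i$ when the $\bm x_i$ are distinct).
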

\begin{proof}[Proof Sketch]
{The Lagrangian dual is obtained by computing $\min_{\bm\xi,\bm\phi}\mathcal{L}(\bm\phi,\bm\xi;\bm\lambda_W)$, where $\mathcal{L}(\bm\phi,\bm\xi;\bm\lambda_W)=f(\bm\phi,\bm\xi)+\bm\lambda_W^\top(\bm A_W\bm\phi+\bm B_W\bm\xi)$ for any $\bm\lambda_W\leq 0$. The optimality (KKT) conditions for $\bm\phi,\bm\xi$ are $\bm\phi_\star^W=\bm y+\bm{A}_W^\top\bm\lambda_W$ and $\bm\xi_\star^W=\rho\bm B_W^\top\bm\lambda_W$. Plugging these equations into $\mathcal{L}$ and flipping the sign of the function lead to the objective $L_W$. As $\B\lambda_{W} \mapsto L_W(\B\lambda_{W})$ is a quadratic function, it is $\sigma_W$-smooth with $\sigma_W$ being the maximum eigenvalue of the Hessian matrix. Since the Hessian of the reduced dual problem is a submatrix of that of the original dual problem (corresponding to $W$ out of $\Omega$), it follows that $\sigma_W\leq \sigma$. The formulation of the polyhedral set follows from the KKT conditions~\cite{BV2004}. }
\end{proof}

PGD is readily applicable to~\eqref{eqn:relaxed-dual}. The per iteration cost is $O(|W|d)$, which is much smaller than 
$O(n^2d)$ (as $|W| \sim n$). Solving the reduced problem~\eqref{eqn:relaxed-dual} is usually much 
faster than solving the full problem~\eqref{eqn:main-dual} when $|W| \ll |\Omega|$.

\subsection{Augmentation Rules}\label{subsec:active-set-aug}

For any $W\subseteq \Omega$, given a feasible solution\footnote{This can be obtained by solving~\eqref{eqn:relaxed-dual} exactly (i.e., to optimality) or inexactly (i.e., approximately).} $\bm\lambda_W\in\R_{-}^{|W|}$ to the reduced dual~\eqref{eqn:relaxed-dual}, we can construct its corresponding primal variables $(\bm{\phi}^W,\bm{\xi}^W)$ for \eqref{eqn:relaxed-primal}, by making use of the KKT conditions:
\begin{equation}\label{def:primal-dual}
(\bm{\phi}^W,\bm{\xi}^W)
=(\bm{y}-\bm{A}^\top_W\bm{\lambda}_W,-\frac{1}{\rho}\bm{B}_W^\top\bm{\lambda}_W)\in\mathbb{R}^n\times\mathbb{R}^{nd}.
    \end{equation}
    
Notice that for a general dual variable $\bm\lambda_W\in\mathbb{R}^{|W|}_{-}$, the primal variables obtained via \eqref{def:primal-dual} may not be feasible for the reduced primal problem \eqref{eqn:relaxed-primal}. Below we discuss some rules for augmenting the current set of constraints (i.e., the active set). 

After obtaining $(\bm{\phi}^W,\bm{\xi}^W)$, we check if it is feasible for \eqref{eqn:main-primal}---that is, we verify if each component of $\bm{A\phi}^W+\bm{B\xi}^W$ (denoted by $v_{(i,j)}$) is nonnegative:
\begin{equation}\label{eqn:viols}
v_{(i,j)}=\phi_j^W-\phi_i^W-\langle \bm x_j-\bm x_i,\bm\xi_i^W\rangle \geq 0~~~\forall (i,j) \in \Omega.
\end{equation}
To this end, it is helpful to decompose the $O(n^2)$ constraints into $n$ blocks
$$\Omega = \bigcup_i\Omega_{i\cdot}=\bigcup_j\Omega_{\cdot j},$$
where $\Omega_{i\cdot} = \{(i,j):j\neq i,1\leq j\leq n\}$ and $\Omega_{\cdot j}=\{(i,j):i\neq j,1\leq i \leq n\}.$ Similarly, we define the slices $W_{i \cdot}$ and $W_{\cdot i}$ for all $i$.
The two decompositions $\{\Omega_{i\cdot}\}_{1}^{n}$ and $\{\Omega_{\cdot j}\}_{1}^{n}$ have different geometric interpretations. 
Now define points $P_j=\{\bm x_j,\phi_j^W\}$ and hyperplanes $H_i:y=\langle \bm{x}-\bm{x}_i,\bm\xi^W_i\rangle+\phi_i^W$ in $\mathbb{R}^{d+1}$. Note that each point $P_i$ lies on the hyperplane $H_i$, and $v_{(i,j)}$ denotes the vertical distance between $P_j$ and $H_i$. 
For each $i$, the nonnegativity of $v_{(i,j)}$ for all $(i,j) \in \Omega_{i\cdot}$, checks if the hyperplane $H_i$ lies below each point $P_j$, i.e. if $H_i$ is a supporting hyperplane of the points $\{P_j\}_1^n$. 
On the other hand, for each $j$, the nonnegativity of $v_{(i,j)}$ for all $(i,j) \in \Omega_{\cdot j}$, checks if $P_j$ lies above each hyperplane $H_i$---i.e., if $P_j$ lies above the piecewise maximum of these hyperplanes. The quantity $|\min_i v_{(i,j)}|$ is the amount by which $P_j$ lies below  the piecewise maximum. 

In \Cref{sec:dualgap} we see that the block decomposition interpretation is useful in obtaining a primal feasible solution.
Next we discuss a deterministic augmentation rule---due to its high computational cost, we subsequently present randomized augmentation rules---both of which make use of the above decomposition of $\Omega$. 

\subsubsection{Deterministic Augmentation Rule}\label{sec:det-augment}
We first present a simple greedy-like deterministic augmentation rule:

\smallskip

\begin{compactitem}
\item[{\bf Rule~{1}}.] \textbf{Greedy within each Block:}  For each block $\Omega_{i\cdot}$ (or $\Omega_{\cdot j}$),  choose $P$ pairs with the smallest $v_{(i,j)}$-values among $\Omega_{i\cdot}\backslash W_{i\cdot}$ (or $\Omega_{\cdot j}\backslash W_{\cdot j}$). From these $P$ indices, we add to the current active set $W$, only those with negative $v_{(i,j)}$-values.
\end{compactitem}

\smallskip

Rule~1 evaluates $O(n^2)$ constraints with computational cost {$O(n^2d+d|W|)$} and augments $W$ by at most $nP$ such constraints. For every block, obtaining the largest $P$ violations can be done via a partial sort---leading to a total cost of $O(n^2d + nP\log P)$ for $n$ blocks\footnote{A similar augmentation rule with $P=1$ is used in~\cite{bertsimas2021sparse}}. As a result, with this greedy rule, the augmentation becomes a major computational bottleneck for the active-set type method. This motivates the randomized augmentation rules discussed below.

\subsubsection{Randomized Augmentation Rules}\label{sec:rand-augment}
We present four randomized rules that sample a small subset of the  
indices in $\Omega \setminus W$ instead of performing a computationally intensive full scan across $O(n^2)$ indices as in Rule~1.

\smallskip

\begin{compactitem}
    \item[\bf Rule 2.] \label{crt2:random}\textbf{Random:} Uniformly sample $K$ indices in $\Omega\backslash W$---the cost of computing the corresponding $v_{(i,j)}$-values is $O(Kd+d|W|)$.
        
    \smallskip

    \item[\bf Rule 3.] \label{crt3:random-block}\textbf{Random within each Block:} For each block $\Omega_{i\cdot}$ (or $\Omega_{\cdot j}$), uniformly sample $P$ elements in $\Omega_{i\cdot}\backslash W_{i\cdot}$ (or $\Omega_{\cdot j}\backslash W_{\cdot j}$); the total computational cost is $O(nPd+d|W|)$.

    \smallskip

    \item[\bf Rule 4.] \label{crt4:RtG}\textbf{Random then Greedy:} Uniformly sample $M$-many $(i,j)$-pairs from $\Omega\backslash W$, and from these pairs choose the $K$ pairs with the smallest 
    $v_{(i,j)}$-values. Computing the $M$ values of $v_{(i,j)}$ cost $O(Md+d|W|)$, and the greedy selection step costs $M+K\log K$. The total computational cost is $O(Md+d|W|)$.
    
    \smallskip

    \item[\bf Rule 5.] \label{crt5:block-RtG}\textbf{Random Blocks then Greedy within each Block:} Uniformly sample $G$ groups from $\{\Omega_{i\cdot}\}_{i=1}^n$ (or $\{\Omega_{\cdot j}\}_{j=1}^n$) and for each group, choose the $P$ pairs that have the smallest $v_{(i,j)}$ values with $(i,j) \in \Omega_{i\cdot}\backslash W_{i\cdot}$ (or $\Omega_{\cdot j}\backslash W_{\cdot j}$). Similar to Rule 4, the total computational cost is $O(Gnd+d|W|)$.
\end{compactitem}

\medskip

Note that the above rules lead to a set of indices denoted by 
$\Delta'$. From these candidates, we only select those with negative $v_{(i,j)}$-values, which are then appended to the current active set $W$. That is,  if $\Delta=\{(i,j)\in\Delta':v_{(i,j)}<0\}$ then we set $W \leftarrow W \cup \Delta$.

Of the above, Rules~4 and 5 are inspired by, but different from the random-then-greedy coordinate descent procedure~\cite{lu2018randomized}, proposed in the context of Gradient Boosting.

\subsubsection{Norms Associated with the Augmentation Rules}
The computational guarantees of our active set-type algorithms depend upon certain norms induced by 
the aforementioned augmentation rules.
We first present some notation that we will use in our analysis.

\begin{definition}\label{def:indexset}
Given a vector $\bm{\theta}$, an index set $S$, and $k\leq |S|$, let $\mathcal{G}[S,k,\bm \theta]$ be the set of $k$ elements with the largest values of $|\theta_\omega|$ for $\omega\in S$. Given a pair  $(S,k)$ as above, we let $\mathcal{U}[S,k]$ denote the set of $k$ uniformly subsampled indices from the set $S$.
\end{definition}

\begin{definition}\label{def:rule-indexset}
Given a vector $\bm{\theta}\in\R^{n(n-1)}$ indexed by $\Omega$, let $\{S_i\}_{1}^{n}$ be disjoint subsets of $\Omega$, with $\bar{S}=\cup_{i \in [n]} S_{i}$.
Given positive integers $P,K,M,G$, we define $\delta_{1}(\bm\theta,\{S_i\}), \ldots, \delta_{5}(\bm\theta,\{S_i\}) \subset \Omega$ as follows:
\begin{eqnarray*}
\delta_{1}(\bm\theta,\{S_i\})=\bigcup\limits_{i=1}^n\mathcal{G}[S_i,P,\bm\theta],~~~
\delta_2(\bm\theta,\{S_i\})=\mathcal{U}[\bar{S},K],~~~~
\delta_3(\bm\theta,\{S_i\}) = \bigcup_{i=1}^n\mathcal{U}[S_i,P]\\
\delta_4(\bm\theta,\{S_i\})=\mathcal{G}[\mathcal{U}[\bar{S},M],K,\bm\theta],~~~~~~~
\delta_5(\bm\theta,\{S_i\})=\bigcup_{i\in\mathcal{U}[[n],G]}\mathcal{G}[S_i,P,\bm\theta].
\end{eqnarray*}
\end{definition}

With the above notation, we can express the indices to be augmented as a function of the violations ${\bm v}$, a vectorized representation of the entries $\{v_{(i,j)}\}_{(i,j)\in \Omega}$~\eqref{eqn:viols}. Let $\Delta'_{\{\ell\}}$ denote the pairs selected by Rule~$\ell$, and 
$\Delta_{\{\ell\}}=\{\omega\in\Delta_{\{\ell\}}':v_{(i,j)}<0\}\subseteq \Delta_{\{\ell\}}'$ be the final set to be added to $W$.
Let $\tilde{\bm v}$ be a vector having the same length as $\M{v}$, with its entries given by
$\tilde{v}_{(i,j)}=\min\{v_{(i,j)},0\}$. Then it is easy to verify that $\Delta'_{\{\ell\}}$ can be written as $\delta_\ell(\tilde{\bm v},\{S_i\})$ with $S_i=\Omega_{i\cdot}\setminus W_{i\cdot}$ or $S_i = \Omega_{\cdot i}\setminus W_{\cdot i}$.

\begin{definition}\label{def:rule-norms}
Given $\bm\theta\in\R^{n(n-1)}$, let $\{\Omega_i\}_{1}^n$ be either $\{\Omega_{i\cdot}\}_{1}^{n}$ or $\{\Omega_{\cdot i}\}_{1}^{n}$.
Define 
\begin{equation}\label{norm-theta-ell}
\norm{\bm\theta}_{\{\ell\}}=\left(\E\left[\sum_{\omega\in\delta_\ell(\bm\theta,\{\Omega_i\})}|\theta_\omega|^2\right]\right)^{1/2} ~~~\text{for}~~~\ell \in \{ 1, \ldots, 5\}.
\end{equation}
For $\ell=1$, there is no randomness in $\delta_\ell$, so the expectation can be removed. For $\ell\geq 2$, the expectation is taken over the randomness arising from the selection operator $\mathcal{U}$ (cf \Cref{def:indexset}).
\end{definition}

\Cref{lem:outer-conv-lem2-main} shows that the expression in display~\eqref{norm-theta-ell} leads to a norm on $\R^{n(n-1)}$. Furthermore, the norm equivalence constants appearing in~\eqref{eqn:alphal-betal} determine the convergence rates of \hyperlink{algo}{Algorithm~1} (see \Cref{thm:outer-conv}). 

\begin{lemma}\label{lem:outer-conv-lem2-main}
$\norm{\bm\theta}_{\{\ell\}}$ defined in \eqref{norm-theta-ell} is a norm on $\R^{n(n-1)}$. Furthermore, {the constants $\alpha_{\{\ell\}}, \beta_{\{\ell\}}$ listed in \Cref{table:summary}} satisfy that for all $\bm{x} \in \mathbb{R}^{n(n-1)}$
\begin{equation}\label{eqn:alphal-betal}
\alpha_{\{\ell\}}\norm{\bm{x}}_{\{\ell\}}^2\geq\norm{\bm{x}}_2^2\geq \beta_{\{\ell\}}\norm{\bm{x}}_{\{\ell\}}^2.
\end{equation}
\end{lemma}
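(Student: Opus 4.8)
The plan is to verify the norm axioms for each $\norm{\cdot}_{\{\ell\}}$ and then extract the equivalence constants $\alpha_{\{\ell\}},\beta_{\{\ell\}}$ by a direct combinatorial/probabilistic counting argument. For the norm axioms, positive homogeneity is immediate since $\delta_\ell(\bm\theta,\{\Omega_i\})$ depends on $\bm\theta$ only through the ordering of $|\theta_\omega|$, which is scale-invariant, and the quantity inside the expectation scales as $t^2$ under $\bm\theta\mapsto t\bm\theta$. Positive definiteness ($\norm{\bm\theta}_{\{\ell\}}=0\iff\bm\theta=\bm 0$) follows because each rule selects at least one index from every nonempty block (or at least $\min\{K,|\bar S|\}$ globally), so if $\bm\theta\neq\bm 0$ there is a realization with positive probability picking up a nonzero coordinate; for $\ell=1$ (deterministic) one picks the $P$ largest-magnitude entries in each block, which certainly includes a nonzero one when the block is nonempty and $\bm\theta\neq\bm 0$. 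The triangle inequality is the one axiom requiring a bit more care, and I expect it to be the main obstacle, so I treat it next.

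For the triangle inequality I would first observe that $\norm{\cdot}_{\{\ell\}}$ can be rewritten as a supremum: $\sum_{\omega\in\mathcal{G}[S,k,\bm\theta]}|\theta_\omega|^2=\max_{T\subseteq S,\,|T|\le k}\sum_{\omega\in T}|\theta_\omega|^2$, i.e. the selection operator $\mathcal{G}$ always picks the maximizing set. Hence for $\ell=1$, $\norm{\bm\theta}_{\{1\}}^2=\sum_{i=1}^n\max_{|T_i|\le P,\,T_i\subseteq\Omega_i}\sum_{\omega\in T_i}|\theta_\omega|^2$, which is a supremum over a fixed collection of seminorms $\bm\theta\mapsto(\sum_{i}\sum_{\omega\in T_i}|\theta_\omega|^2)^{1/2}$ (each of which is a genuine seminorm — a coordinate projection followed by $\ell_2$), and a pointwise supremum of seminorms is a seminorm; combined with positive definiteness this gives a norm. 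For $\ell\ge2$ I would condition on the realization of the random selection $\mathcal{U}$: for each fixed realization, $\delta_\ell(\bm\theta,\{\Omega_i\})$ still selects the magnitude-maximizing subset of the randomly restricted index pool, so $(\sum_{\omega\in\delta_\ell}|\theta_\omega|^2)^{1/2}$ is a seminorm in $\bm\theta$ for that realization; then $\norm{\bm\theta}_{\{\ell\}}=(\E[\,\text{(seminorm)}^2])^{1/2}$ is the $L^2(\Omega,\p)$-norm of a random seminorm, and Minkowski's inequality in $L^2$ (applied after the pointwise triangle inequality of each realization's seminorm) yields subadditivity. The one subtlety is that the random pool $\mathcal{U}[\bar S,M]$ etc. does not depend on $\bm\theta$, which is what makes "fix the realization, then it's a seminorm" legitimate — worth stating explicitly.

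For the equivalence constants in \eqref{eqn:alphal-betal}, the upper bound $\norm{\bm x}_2^2\ge\beta_{\{\ell\}}\norm{\bm x}_{\{\ell\}}^2$ with $\beta_{\{\ell\}}\ge1$ is immediate with $\beta_{\{\ell\}}=1$ whenever $\delta_\ell\subseteq\Omega$ is always a subset of the full index set (which it is), since then $\sum_{\omega\in\delta_\ell}|x_\omega|^2\le\norm{\bm x}_2^2$ surely, hence also in expectation; so the content is entirely in the lower bound $\alpha_{\{\ell\}}\norm{\bm x}_{\{\ell\}}^2\ge\norm{\bm x}_2^2$, i.e. a lower bound on $\E[\sum_{\omega\in\delta_\ell}|x_\omega|^2]$. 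Here I would argue coordinate-by-coordinate: it suffices to bound, for each fixed $\omega\in\Omega$, the probability $\p(\omega\in\delta_\ell(\bm x,\{\Omega_i\}))$ from below by $1/\alpha_{\{\ell\}}$ uniformly in $\bm x$, since then $\E[\sum_\omega |x_\omega|^2 \mathbb{1}\{\omega\in\delta_\ell\}]=\sum_\omega|x_\omega|^2\,\p(\omega\in\delta_\ell)\ge\alpha_{\{\ell\}}^{-1}\norm{\bm x}_2^2$. For the pure-sampling rules (Rule~2: $\p=K/|\bar S|$ adjusted for $W$; Rule~3: $\p=P/|S_i|$) this probability is a simple ratio and is $\bm x$-independent, giving clean constants on the order of $n^2/K$ or $n/P$. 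For the greedy-containing rules (1, 4, 5) the event $\{\omega\in\delta_\ell\}$ does depend on $\bm x$ through the ranking, so the uniform-in-$\bm x$ lower bound is subtler: for Rule~1, $\omega\in\delta_1$ need not hold, so instead I would go back to the $\max_{|T_i|\le P}$ form and bound $\sum_i\max_{|T_i|\le P}\sum_{\omega\in T_i}|x_\omega|^2\ge\sum_i\frac{P}{|\Omega_i|}\sum_{\omega\in\Omega_i}|x_\omega|^2$ (the max over $P$-subsets dominates the average over $P$-subsets, which equals the $P/|\Omega_i|$-scaled full block sum), giving $\alpha_{\{1\}}=\max_i|\Omega_i|/P$; for Rules~4 and~5, I would condition on the random pool and apply the same "max dominates average" trick within the realized pool, then take expectation over the pool, where the pool hits any fixed $\omega$ with the sampling probability — so the constants multiply, e.g. $\alpha_{\{4\}}\sim(|\bar S|/M)\cdot(M/K)$-type products. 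I would then collect all five constants into the referenced \Cref{table:summary}, double-checking the $W$-dependence is absorbed by using $|\Omega_i|$ (the worst case $W=\emptyset$) so the constants are uniform over iterations.
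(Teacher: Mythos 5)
Your proposal is correct in substance and, for the most part, arrives at the constants in \Cref{table:summary} by a route that genuinely differs from the paper's. The ``max dominates average'' step is exactly the paper's Proposition~\ref{prop:norm-equiv} (proved there via the LP relaxation $\max_{\bm\pi}\sum\pi_\omega|\theta_\omega|^2$ s.t. $\sum\pi_\omega\le P$, $0\le\pi_\omega\le1$, with $\pi_\omega=P/N$ feasible), so Rules 1, 2, 3 and 5 proceed essentially as in the paper. The real divergence is Rule~4: the paper computes the exact selection probability $\pi(l)$ of the $l$-th largest entry by a hypergeometric counting argument adapted from the random-then-greedy boosting analysis, then uses $\sum_l\pi(l)=K$, monotonicity of $\pi(l)$ and $\pi(1)=M/(n(n-1))$ to sandwich $\norm{\bm\theta}_{\{4\}}^2$; you instead condition on the random pool $\mathcal{U}[\Omega,M]$, apply max-dominates-average inside the pool, and integrate out the pool, which yields the same $\alpha_{\{4\}}=n(n-1)/K$ with less bookkeeping (the paper's computation buys the exact weight profile $\pi(\cdot)$, which is not needed for the lemma). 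Your treatment of the norm axioms is also more careful than the paper's: the paper asserts the triangle inequality tersely (for Rule~4 it leans on the decreasing-weights structure of $\sum_l\pi(l)|\theta_{(l)}|^2$), whereas your reformulation of $\mathcal{G}$ as a maximum over subsets, giving a pointwise supremum of coordinate-projection seminorms per realization followed by Minkowski in $L^2$ over the ($\bm\theta$-independent) randomness, is a clean and complete argument.

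One genuine shortfall: the lemma asserts the \emph{specific} constants $\beta_{\{\ell\}}$ listed in \Cref{table:summary} (e.g., $\beta_{\{2\}}=n(n-1)/K$, $\beta_{\{4\}}=n(n-1)/M$, $\beta_{\{5\}}=n/G$), and these sharper values are what Remark~\ref{remark:rates} and \Cref{table:comparison} use to distinguish the best-case behavior of the rules. Settling for $\beta_{\{\ell\}}=1$ via $\delta_\ell\subseteq\Omega$ proves a weaker statement. The repair is short: for Rules 2 and 3 the norm is an exact multiple of $\norm{\cdot}_2$, so $\beta=\alpha$; for Rules 4 and 5 use that the selected set is contained in the realized pool, so $\sum_{\omega\in\delta_\ell}|x_\omega|^2\le\norm{\bm x_{U}}^2$ surely and taking expectations gives $\norm{\bm x}_{\{4\}}^2\le\frac{M}{n(n-1)}\norm{\bm x}^2$ and $\norm{\bm x}_{\{5\}}^2\le\frac{G}{n}\norm{\bm x}^2$. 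With that addition your argument fully recovers \eqref{eqn:alphal-betal} with the tabulated constants.
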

The proof of \Cref{lem:outer-conv-lem2-main} appears in Section~\ref{subsecA:proof-lem2}.

\Cref{table:summary} presents a summary of some key characteristics of the five rules.

\begin{table}[h]\centering
\begin{tabular}{c|ccccc}
     \hline
     Rule& $\Delta'_{\{\ell\}}$&$|\Delta'_{\{\ell\}}|$&Augmentation Cost&$ \alpha_{\{\ell\}}$ &  $\beta_{\{\ell\}}$\\
     \hline
     1& $\cup_{i=1}^n\mathcal{G}[\Omega_i,P,\tilde{\bm v}]$&$nP$&$O(n^2d+d|W|)$&$\frac{n-1}{P}$&1\\
     2&$\mathcal{U}[\Omega,K]$&$K$&$O(Kd+d|W|)$&$\frac{n(n-1)}{K}$&$\frac{n(n-1)}{K}$\\
     3& $\cup_{i=1}^n\mathcal{U}[\Omega_i,P]$&$nP$&$O(nPd+d|W|)$&$\frac{n-1}{P}$&$\frac{n-1}{P}$\\
     4&$\mathcal{G}[\mathcal{U}[\Omega,M],K,\tilde{\bm v}]$&$K$&$O(Md+d|W|)$&$\frac{n(n-1)}{K}$&$\frac{n(n-1)}{M}$\\
     5&$\cup_{i\in\mathcal{U}[[n],G]}\mathcal{G}[\Omega_i,P,\tilde{\bm v}]$&$GP$&$O(Gnd+d|W|)$&$\frac{n(n-1)}{GP}$&$\frac{n}{G}$\\
     \hline
\end{tabular}
\caption{{\small Summary of some properties of the augmentation rules. Recall that $\Delta'_{\{\ell\}}$ denotes the pairs selected as per Rule~$\ell$. 
The number of candidates to be augmented to the current active set depends upon the signs of $v_{(i,j)}$s; and is of size at most $|\Delta'_{\{\ell\}}|$. 
Here, \textit{Augmentation Cost} is the cost of obtaining $\Delta'_{\{\ell\}}$. We present estimates of the norm-equivalence constants $\alpha_{\{\ell\}},\beta_{\{\ell\}}$~\eqref{eqn:alphal-betal}.
For notational convenience, we assume that $W=\varnothing$---for a nonempty $W$, we can replace $\Omega_i$ with $\Omega_i\backslash W_i$.} }

\label{table:summary}
\end{table}

\subsection{Active set method with inexact optimization of sub-problems}\label{sec:CG-inexact-method}
Once we augment the active set (based on Rules~1--5), we solve the updated reduced QP either exactly or inexactly. We then identify additional constraints to be added to the current active set; and continue till some convergence criteria is satisfied. Our algorithm is summarised below:

\begin{savenotes}
\begin{tcolorbox}[colback=white]
\begin{compactitem}
\item[] \underline{{\bf{\hypertarget{algo}{Algorithm~1}:}} An Active Set Type Method on the Dual~\eqref{eqn:main-dual}} 
\smallskip
\item[]  Initialize with $W^0$ and $\bm{\lambda}^0$. Perform the following steps~1--3 till convergence.
\item[1.] 
Obtain a feasible solution $\bm{\lambda}_{W^m}$ for $(D_{W^m})$
by either (a) solving $(D_{W^m})$ to optimality (i.e., exactly) or (b) solving $(D_{W^m})$ inexactly via a few steps of proximal gradient descent\footnote{Other choices are also possible, as discussed in Remark~\ref{remark-inexact-compute}.}.

\item[2.] Compute $\bm{\phi}^{W^m}=\bm{y}-\bm{A}^\top_{W^m}\bm{\lambda}_{W^m},$~ $\bm{\xi}^{W^m}=-\frac{1}{\rho}\bm{B}_{W^m}^\top\bm{\lambda}_{W^m}$ as per~\eqref{def:primal-dual}.
\item[3.] Use one of the Rules~1--5 to augment $W^{m}$ to obtain $W^{m+1}$; and go to Step~1.
\end{compactitem}
\end{tcolorbox}
\end{savenotes}

In what follows, we call \hyperlink{algo}{Algorithm~1} with option (a) [Step~1] the \textit{Exact Active Set} method (EAS); and option (b) [Step~1] the \textit{Active Set Gradient Descent} (ASGD) method.

If the size of the active set remains sufficiently small, interior point solvers (including heavy-duty commercial solvers like Gurobi, Mosek) may be used for full minimization of the reduced problem, as long as the number of active set iterations remain small. However, for larger problems (e.g., when 
$n\approx 10^4$--$10^5$), first order methods may be preferable due to warm-start capabilities (across active sets) and low memory requirements (by exploiting problem-structure). Furthermore, they also allow for inexact computation for the sub-problems---see numerical experiments in Section~\ref{sec:experiments} showing the important benefits in approximate optimization.

\Cref{thm:outer-conv} establishes that \hyperlink{algo}{Algorithm~1} requires at most 
$m=O(\log(1/\epsilon))$-many outer iterations to deliver an $\epsilon$-optimal dual solution for Problem~\eqref{eqn:main-dual}:
\begin{theorem}\label{thm:outer-conv}
For a given $\bm{\lambda}^0$ and $W^0$, let $\bm{\lambda}^m = \bm{\lambda}^{[W_m]}$ (for $m \geq 1$) be the sequence generated by \hyperlink{algo}{Algorithm~1}. Then, for either EAS or ASGD with augmentation Rule~$\ell$, we have:
$$\E[L(\bm{\lambda}^m)-L^\star]\leq\left(1-\frac{\mu}{\sigma\alpha_{\{\ell\}}}\right)^m [L(\bm{\lambda}^0)-L^\star],$$
where $\sigma$ is the smoothness constant of $L$, $\mu$ is a constant depending\footnote{{See Section~\ref{app:lemma-5-helper-thm} for a characterization of $\mu$}.} only on $\bm{A},\bm{B}$ and $\rho$, and $\alpha_{\{\ell\}}$ appears in \Cref{lem:outer-conv-lem2-main}.
\end{theorem}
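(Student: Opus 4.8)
The plan is to reduce the convergence analysis of \hyperlink{algo}{Algorithm~1} to a ``sufficient decrease'' estimate for a single outer iteration, and then iterate. Let $\bm\lambda^m$ be the current dual iterate (padded with zeros on $\Omega\setminus W^m$ so it lives in $\R^{n(n-1)}$), and let $(\bm\phi^{W^m},\bm\xi^{W^m})$ be the associated primal point from~\eqref{def:primal-dual}. The violations $\bm v = \bm A\bm\phi^{W^m}+\bm B\bm\xi^{W^m}$ and their negative parts $\tilde{\bm v}$ measure how far $\bm\lambda^m$ is from being stationary for the full problem~\eqref{eqn:main-dual}: in fact the projected-gradient / proximal-gradient residual of $L$ at $\bm\lambda^m$ is (up to the smoothness constant) exactly governed by $\tilde{\bm v}$, since $\nabla L(\bm\lambda) = \bm A\bm\phi + \bm B\bm\xi$ evaluated at the primal point attached to $\bm\lambda$, and the constraint $\bm\lambda\le 0$ makes the proximal step a coordinatewise truncation. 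First I would make this precise: show that one proximal-gradient step of $L$ from $\bm\lambda^m$ restricted to the augmented coordinates $W^{m+1}$ decreases the objective by at least a constant times $\sum_{\omega\in\Delta'_{\{\ell\}}}\tilde v_\omega^2$, i.e.\ by at least $\tfrac{c}{\sigma}\,\|\tilde{\bm v}\|_{\{\ell\}}^2$ in expectation, using that the augmentation rule selects precisely the index set $\delta_\ell(\tilde{\bm v},\{\Omega_i\setminus W_i\})$ and the definition~\eqref{norm-theta-ell} of $\|\cdot\|_{\{\ell\}}$. Because in Step~1 we solve $(D_{W^{m+1}})$ either exactly or to no-worse accuracy than one PGD step (ASGD with warm start $\bm\lambda^m$ already does at least as well as this single step, and EAS does even better), we get
\[
\E\big[L(\bm\lambda^{m+1})\big]\;\le\; L(\bm\lambda^m)\;-\;\frac{c}{\sigma}\,\E\big[\|\tilde{\bm v}^m\|_{\{\ell\}}^2\big].
\]

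The second ingredient is a Polyak--\L ojasiewicz (PL) / error-bound inequality relating $\|\tilde{\bm v}^m\|_2^2$ to the optimality gap $L(\bm\lambda^m)-L^\star$. This is where the constant $\mu$ enters: although $L$ is not strongly convex, it is a convex quadratic over the polyhedron $\{\bm\lambda\le 0\}$, and such functions satisfy a global quadratic growth / PL condition with a modulus $\mu$ that depends only on $\bm Q = \bm A\bm A^\top + \tfrac1\rho\bm B\bm B^\top$ (through a Hoffman-type constant of the system defining $\Lambda^\star$ in~\eqref{eqn:opt-dual-poly-W} with $W=\Omega$) — this is the fact alluded to in the text via~\cite{karimi2016linear,necoara2018linear,necoara2018linear}. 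Concretely, I would invoke the standard equivalence ``smooth + polyhedral constraint $\Rightarrow$ PL'' to obtain $\tfrac12\|\tilde{\bm v}^m\|_2^2 \ge (\text{proximal-gradient residual})^2/(2) \ge \mu\,(L(\bm\lambda^m)-L^\star)$, after normalizing constants so that $\mu$ absorbs them. Combining this with the norm-equivalence $\|\tilde{\bm v}^m\|_2^2 \le \alpha_{\{\ell\}}\|\tilde{\bm v}^m\|_{\{\ell\}}^2$ from \Cref{lem:outer-conv-lem2-main} gives
\[
\E\big[\|\tilde{\bm v}^m\|_{\{\ell\}}^2\big]\;\ge\;\frac{1}{\alpha_{\{\ell\}}}\,\E\big[\|\tilde{\bm v}^m\|_2^2\big]\;\ge\;\frac{2\mu}{\alpha_{\{\ell\}}}\,\E\big[L(\bm\lambda^m)-L^\star\big].
\]
Plugging into the descent inequality yields $\E[L(\bm\lambda^{m+1})-L^\star]\le\big(1-\tfrac{\mu}{\sigma\alpha_{\{\ell\}}}\big)\E[L(\bm\lambda^m)-L^\star]$ (after fixing the numerical constant $c$ so that $c\cdot 2\mu$ reads as $\mu$), and the claim follows by induction on $m$, taking total expectation over the independent randomness in each augmentation step.

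A couple of technical points need care. One must be precise about the ``restriction'' step: the single PGD step used in the lower bound is taken in the coordinates $W^{m+1}$, and one has to argue that the residual on those coordinates captures all the negative-violation mass selected by Rule~$\ell$ — this is immediate because $\Delta'_{\{\ell\}}\subseteq W^{m+1}$ by construction of Step~3, and on coordinates already in $W^m$ where $\bm\lambda^m$ is (exactly or inexactly) optimal, the contribution only helps. A second point is that for the inexact (ASGD) variant one needs the warm-started iterate after the inner PGD steps to be at least as good as one PGD step from $\bm\lambda^m$ on $(D_{W^{m+1}})$; since PGD is monotone and $\bm\lambda^m$ is feasible for $(D_{W^{m+1}})$ (pad with zeros), this holds for any number $\ge 1$ of inner iterations.

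The main obstacle I anticipate is establishing the PL/error-bound inequality with the ``right'' constant $\mu$ that is genuinely independent of the active set — i.e.\ making sure the Hoffman constant of the optimality system is controlled by $\bm A, \bm B, \rho$ alone and does not degrade with $|W|$ or across iterations. Getting the chain ``negative violations $\to$ proximal-gradient residual of the \emph{full} problem $\to$ optimality gap'' with clean constants, and reconciling it with the one-step decrease measured only on the \emph{augmented} coordinates, is the delicate part; everything else (monotonicity of PGD, norm equivalence from \Cref{lem:outer-conv-lem2-main}, the induction) is routine.
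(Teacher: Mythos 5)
Your proposal is correct and follows essentially the same route as the paper: a one-step sufficient-decrease bound over the augmented coordinates (with the inexact case handled by the descent/monotonicity of PGD from the zero-padded warm start), a Hoffman-constant-based proximal-PL inequality for the full dual that makes $\mu$ independent of the active set, and the norm equivalence $\alpha_{\{\ell\}}\norm{\cdot}_{\{\ell\}}^2\geq\norm{\cdot}_2^2$ from \Cref{lem:outer-conv-lem2-main} to pass from the expected captured violation mass to the full residual. The only piece you leave informal---reconciling the decrease on $W^{m+1}$ with the residual on all of $\Omega$ when the reduced problem is solved inexactly---is exactly what the paper formalizes via the coordinatewise function $d(\lambda,\theta,\mu)\geq 0$ and complementary slackness, and your sketch of how to close it is the right one.
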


The proof of \Cref{thm:outer-conv} is provided in Sections~\ref{app:lemma-5-helper-thm} and~\ref{subsec:proof-outer-thm}. We make a few remarks regarding \Cref{thm:outer-conv}.

\begin{remark}
If we used PGD on the dual~\eqref{eqn:main-dual}, the linear convergence rate parameter~\cite{nesterov2018lectures, karimi2016linear} would be $(1-\mu/\sigma)$.
The parameter in \Cref{thm:outer-conv} has an additional factor $1/\alpha_{\{\ell\}}$,
which arises due to the use of an active set method in conjunction with the augmentation rules. We present an example in \Cref{subsubsec:unavoidable} showing that this factor is perhaps unavoidable. 
\end{remark}

\begin{remark}\label{remark-inexact-compute}
\hyperlink{algo}{Algorithm~1} allows for both exact and inexact optimization of the reduced problem---the guarantees for \Cref{thm:outer-conv} apply to both variants. {{In particular, if we use a fixed number of PGD iterations for every sub-problem, we would obtain a linear convergence rate on the total number of inner iterations.}} 
Furthermore, \Cref{thm:outer-conv} applies to both deterministic and randomized augmentation rules.

Although our theory is based on a proximal gradient update on the reduced problem, the theory also applies to other descent algorithms that have a sufficient decrease condition, e.g. accelerated proximal gradient (APG) methods~\cite{beck2009fast,nesterov2018lectures} and L-BFGS~\cite{liu1989limited} with some modifications--see discussions in \Cref{remark:FISTA-LBFGS}.
\end{remark}

\begin{remark}\label{remark:rates}
\Cref{thm:outer-conv} implies that we need {at most} $O\left(({\sigma\alpha_{\{\ell\}}}/{\mu})\log({1}/{\epsilon})\right)$-many outer iterations,
to achieve an accuracy level of $\epsilon>0$. This
quantifies the worst-case convergence rate via $\alpha_{\{\ell\}}$.  We note that the constant $\sigma\alpha_{\{\ell\}}/\mu$ can be large when $n$ is large, making the speed of linear convergence slow. However,
by following the arguments in the proof, 
one can consider a more optimistic upper bound on the number of iterations,
by replacing $\alpha_{\{\ell\}}$ with $\beta_{\{\ell\}}$ (cf Lemma~\ref{lem:outer-conv-lem2-main}), as we discuss below.
\end{remark}

\noindent {\bf Understanding the costs of different rules:}
To better understand the computational costs of the different rules, we 
consider a setting where the maximum number of selected pairs (i.e., $|\Delta_{\{\ell\}}'|$) is set to be $O(n)$ across all rules\footnote{The choice of $P,K,M,G$ are specified in the caption of \Cref{table:comparison}}. 

According to \Cref{remark:rates}, to achieve $\epsilon$ accuracy, we need 
$O( (\sigma\alpha_{\{\ell\}}/{\mu})\log({1}/{\epsilon}))$ outer iterations in the worst case and $O(({\sigma\beta_{\{\ell\}}}/{\mu})\log({1}/{\epsilon}))$ outer iterations in the best case. Hence, for a fixed $\epsilon>0$, the total augmentation cost is proportional to $\alpha_{\{\ell\}}\times$\textit{Augmentation Cost} for the worst case and $\beta_{\{\ell\}}\times$\textit{Augmentation Cost} for the best case. See  \Cref{table:comparison} for an illustration. \begin{table}[h]\centering
\scalebox{0.95}{\begin{tabular}{c|cccc}
     \hline
     Rule& $|\Delta'_{\{\ell\}}|$&Augmentation Cost&$ \alpha_{\{\ell\}}$ &  $\beta_{\{\ell\}}$\\
     \hline
     1& $O(n)$&$O(n^2d+d|W|)$&$O(n)$&$O(1)$\\
     2&$O(n)$&$O(nd+d|W|)$&$O(n)$&$O(n)$\\
     3& $O(n)$&$O(nd+d|W|)$&$O(n)$&$O(n)$\\
     4&$O(n)$&$O(n\sqrt{n}d+d|W|)$&$O(n)$&$O(\sqrt{n})$\\
     5&$O(n)$&$O(n\sqrt{n}d+d|W|)$&$O(n)$&$O(\sqrt{n})$\\
     \hline
\end{tabular}}
\caption{\small {Comparing Augmentation Rules: We present an instance of \Cref{table:summary} where $|\Delta_{\{\ell\}}'|$ is the same across $\ell$. Specifically, we set $P=1$ for Rule 1, $K=n$ for Rule 2, $P=1$ for Rule 3, $M=n\sqrt{n},K=n$ for Rule 4, and $G=\sqrt{n},P=\sqrt{n}$ for Rule 5. Here, we ignore log-terms in the big $O$ notation.}}
\label{table:comparison}
\end{table}

We make a note of the following observations from \Cref{table:comparison}. 
\begin{compactitem}
\item As the maximum size of the augmentation set (i.e, $|\Delta_{\{\ell\}}'|$) at each iteration is the same across all rules, $\alpha_{\{\ell\}}$ is the same across all the five rules $\ell$. 

\item  In the worst case, different rules take the same number of iterations to achieve $\epsilon$ accuracy. 
The largest total augmentation costs (i.e., $\alpha_{\{\ell\}}\times$Augmentation Cost) are incurred by the purely greedy rule (Rule 1), and then the random-then-greedy rules (Rule 4 and Rule 5).

\item In the best case, the greedy rules take fewer iterations---the purely greedy rule (Rule~1) is better than random-then-greedy rules (Rules 4 and 5). The purely random rules (Rules 2 and 3) have similar iteration counts for the best and worst cases (as $\alpha_{\{\ell\}} = \beta_{\{\ell\}}$). 
\end{compactitem}

\medskip

The above discussion pertains to our theoretical guarantees.
{We note that the parameter ${\mu}/{(\sigma\alpha_{\{\ell\}})}$
appearing in Theorem~\ref{thm:outer-conv} can be difficult to compute (see Section~\ref{app:lemma-5-helper-thm} for a description of $\mu$), and may be small for large datasets making the linear rate slow in the worst-case scenario.
Our numerical experiments offer a refined understanding of the operating characteristics of the different algorithms, and what occurs in practice.}

{We note that though our work focuses on a problem arising from  convex regression, our algorithmic framework with randomized and random-then-greedy augmentation rules can also be applied to other convex quadratic programming problems involving a large number of decision variables with nonnegativity constraints.}

\subsubsection{Related work}
As discussed earlier, \hyperlink{algo}{Algorithm~1} (with exact optimization for the subproblems) is inspired by constraint generation, a classic tool for solving large scale linear programs~\cite{dantzig1960decomposition,bertsimas1997introduction}. Recently~\cite{bertsimas2021sparse} explore constraint generation for convex regression, but our framework differs. 
\cite{bertsimas2021sparse} use a primal approach for~\eqref{eqn:nonreg} and solve the restricted primal problem to {\emph{optimality}} using commercial QP solvers (e.g., Gurobi). They do not present computational guarantees for their procedure. 
In this paper we consider the subgradient regularized problem~\eqref{eqn:main-primal}, and our algorithms are on the {\emph{dual}} problem~\eqref{eqn:main-dual}. We extend the framework of~\cite{bertsimas2021sparse} to allow for {\emph{both}} exact and inexact optimization of the reduced problems. As a main computational bottleneck in the constraint generation procedure is in the augmentation step which requires scanning all the $O(n^2)$ constraints, we present randomized augmentation rules, which examine only a small subset of the constraints, before augmenting the active-set. Our computational guarantees, which to our knowledge are novel, apply to both the exact and inexact sub-problem optimization processes and deterministic/randomized augmentation rules.

As far as we can tell, with the exception of~\cite{bertsimas2021sparse}, convex optimization-based approaches for convex regression that precede our work (e.g,~\cite{mazumder2018computational,lin2020efficient,aybat2016parallelizable}), do not use active set-type methods and hence apply to smaller problem instances $n\approx 3000$. We note that active-set type methods are commonly used for convex QPs see, e.g., \cite{laiu2019constraint,friedman2010regularization} and references therein. We also explore active-set type methods, (randomized) augmentation rules, inexact optimization of sub-problems, and their computational guarantees for subgradient regularized convex regression.

The primal problem~\eqref{eqn:main-primal} can also be viewed as a projection problem onto a polyhedron with $O(n^2)$ constraints. \cite{hager2016projection} consider the problem of projecting a point into a polyhedron. To solve this problem, they 
explore some active set ideas and develop convergence guarantees for their procedure. The active-set subproblem considered in~\cite{hager2016projection} is different from what we consider.
Furthermore, our work differs in using randomized augmentation rules, inexact optimization of the subproblems, and their associated computational guarantees. Using problem structure, we can address instances larger than those studied in~\cite{hager2016projection}.

\section{Primal Feasibility and Duality Gap}\label{sec:dualgap}
Given a feasible solution for the full dual~\eqref{eqn:main-dual}, 
{{we show how it can be used to obtain a feasible solution for the primal problem~\eqref{eqn:main-primal} while also achieving a good primal objective value}}
(note that \eqref{def:primal-dual} need {\it{not}} deliver a primal feasible solution).
A primal feasible solution is useful as it leads to a convex function estimate and also a duality gap certificate.

Given a primal candidate $(\bm{\phi},\bm{\xi})$, we will
 construct $(\tilde{\bm{\phi}},\tilde{\bm{\xi}})$ that is feasible for~\eqref{eqn:main-primal}. 
To this end, let $v_{(i,j)}=\phi_j-\phi_i-\langle \bm{x}_j-\bm{x}_i,\bm{\xi}_i\rangle,$ for all $i\neq j$ and we use the convention $v_{(j,j)}=0$. Furthermore, let
 \begin{equation}
 \nu_j= \min_i~v_{(i,j)} ~~~\text{and}~~~\kappa_j\in\arg\min\{\lVert\bm{\xi}_k\rVert:k\in \arg\min_iv_{(i,j)}\}
 \end{equation}
and define 
$(\tilde{\bm{\phi}},\tilde{\bm{\xi}}):=\M{\psi}(\M\phi, \M\xi)$ as follows
 \begin{equation}\label{eqn:efn-feasible}
\tilde{\bm{\xi}}_j=\bm{\xi}_{\kappa_j}~~~\text{and}~~~~\tilde{\bm{\phi}}=\bm{\phi}-\bm{\nu}+c\bm{1} 
\end{equation}
where,  $c$ is such that $\bm{1}^\top\tilde{\bm{\phi}}=\bm{1}^\top\bm{y}$.
The following proposition shows that $(\tilde{\bm{\phi}},\tilde{\bm{\xi}})$ is feasible for the full primal problem.
\begin{proposition}\label{prop:primal-feas-soln}
For any pair $(\bm{\phi},\bm{\xi})$, the pair $(\tilde{\bm{\phi}},\tilde{\bm{\xi}})$ defined in~\eqref{eqn:efn-feasible},
is feasible for~\eqref{eqn:main-primal}. 
Furthermore, if $(\bm{\phi}^\star,\bm{\xi}^\star)$ is an optimal solution for~\eqref{eqn:main-primal} then it is a fixed point for the map $\M{\psi}(\cdot, \cdot)$, i.e., $\M\psi(\bm{\phi}^\star,\bm{\xi}^\star)=(\bm{\phi}^\star,\bm{\xi}^\star).$ 
\begin{proof}[{Proof of \Cref{prop:primal-feas-soln}}]
Given $\{{\phi}_{i}\}$, $\{\bm{\xi}_{i}\}$ and a scalar $c$, we define the following piecewise linear convex function (in $\M{x}$):
$$\varphi(\bm{x})=\max_{i \in [n]}~\{\langle \bm{x},\bm{\xi}_i\rangle +(\phi_i-\langle \bm{x}_i,\bm{\xi}_i\rangle)\}+c.$$
For any $j$,  we have
$$\max_{i\in[n]}\{\langle \bm x_j,\bm\xi_i\rangle+(\phi_i-\langle \bm x_i,\bm\xi_i\rangle)\}=\max_{i\in[n]}\{\phi_j-(\phi_j-\phi_i-\langle\bm x_j-\bm x_i,\bm\xi_i\rangle)\}=\phi_j-\min_{i \in [n]} v_{(i,j)}.$$
By definition of $\tilde{\bm\phi},\tilde{\bm\xi}$ given by~\eqref{eqn:efn-feasible}, it follows that
$$\varphi(\bm{x}_j)=\tilde{\phi}_j\quad\text{and}\quad \tilde{\bm\xi}_{j} =\bm{\xi}_{\kappa_j}\in\partial \varphi(\bm{x}_j).$$
Therefore, $(\tilde{\bm{\phi}},\tilde{\bm{\xi}})$ is feasible for the full primal problem.

Due to the feasibility of $(\bm{\phi}^\star,\bm{\xi}^\star)$, we have $\bm{v}^\star=\bm{A}\bm{\phi}^\star+\bm{B}\bm{\xi}^\star\geq \M{0}$, and thus $\min_iv_{(i,j)}=0=v_{(j,j)}$. This means $\bm\nu = \M{0}$, so it suffices to show that $c=0$. Note that the first term in the primal objective $\frac{1}{2}\norm{\bm\phi-\bm y}^2$ is minimized when $\bm{\phi}$ and $\bm y$ have the same mean (otherwise we can add a constant to $\bm\phi$ to decrease the objective). Therefore, the $c$ that satisfies $\bm{1}^\top(\bm\phi^\star+c\bm{1})=\bm{1}^\top\bm y$ must be 0. Thus, we have shown that $\tilde{\bm{\phi}}^\star=\bm{\phi}^\star$.
If $\tilde{\bm{\xi}}^\star\neq\bm{\xi}^\star$, then there exists $j$ such that $\kappa_j\neq j$, which means $\lVert\bm{\xi}_{\kappa_j}^\star\rVert\leq \lVert\bm{\xi}_j^\star\rVert$, and we have $f(\tilde{\bm{\phi}}^\star,\tilde{\bm{\xi}}^\star)\leq f(\bm{\phi}^\star,\bm{\xi}^\star)$. This contradicts the strict minimality of $(\bm{\phi}^\star,\bm{\xi}^\star)$ as the primal objective function $f$ is strictly convex.
\end{proof}
\end{proposition}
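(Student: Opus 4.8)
The plan is to produce, for an arbitrary input pair $(\bm{\phi},\bm{\xi})$, an explicit piecewise-linear convex function that interpolates the adjusted heights $\tilde{\bm{\phi}}$ with the adjusted slopes $\tilde{\bm{\xi}}$, and then to read off feasibility of $(\tilde{\bm{\phi}},\tilde{\bm{\xi}})$ for \eqref{eqn:main-primal} directly from the subgradient inequality. Concretely, for $\{\phi_i\}$, $\{\bm{\xi}_i\}$ and a scalar $c$ I would set $\varphi(\bm{x}) := \max_{i\in[n]}\{\langle \bm{x},\bm{\xi}_i\rangle + \phi_i - \langle \bm{x}_i,\bm{\xi}_i\rangle\} + c$, which is convex as a maximum of affine functions for \textit{any} input pair. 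The only computation that matters is that the $i$-th term evaluated at $\bm{x}_j$ equals $\phi_i + \langle \bm{x}_j-\bm{x}_i,\bm{\xi}_i\rangle = \phi_j - v_{(i,j)}$, so $\varphi(\bm{x}_j) = \phi_j - \min_i v_{(i,j)} + c = \phi_j - \nu_j + c = \tilde{\phi}_j$, which is exactly the definition in \eqref{eqn:efn-feasible}; moreover an index attaining the $j$-th maximum is a minimizer of $v_{(\cdot,j)}$, whose active affine piece has gradient the corresponding $\bm{\xi}$, so the tie-break $\kappa_j$ gives $\tilde{\bm{\xi}}_j = \bm{\xi}_{\kappa_j}\in\partial\varphi(\bm{x}_j)$. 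The subgradient inequality $\varphi(\bm{x}_j) \ge \varphi(\bm{x}_i) + \langle \tilde{\bm{\xi}}_i,\bm{x}_j-\bm{x}_i\rangle$ then reads $\tilde{\phi}_j - \tilde{\phi}_i \ge \langle \bm{x}_j-\bm{x}_i,\tilde{\bm{\xi}}_i\rangle$ for all $(i,j)\in\Omega$, i.e.\ feasibility of $(\tilde{\bm{\phi}},\tilde{\bm{\xi}})$; note the constant $c$ and the shift $-\bm{\nu}$ are harmless here because the constraints are invariant under $\bm{\phi}\mapsto\bm{\phi}+t\bm{1}$.

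For the fixed-point claim, take $(\bm{\phi}^\star,\bm{\xi}^\star)$ optimal for \eqref{eqn:main-primal}. Feasibility gives $v^\star_{(i,j)}\ge 0$, and with the convention $v_{(j,j)}=0$ this forces $\nu_j = \min_i v^\star_{(i,j)} = 0$ for every $j$, hence $\bm{\nu}=\bm{0}$ and $\tilde{\bm{\phi}}^\star = \bm{\phi}^\star + c\bm{1}$. To pin down $c=0$ I would use that shifting $\bm{\phi}$ by a multiple of $\bm{1}$ changes neither the constraints nor the regularizer, so $t\mapsto \tfrac12\|\bm{y}-\bm{\phi}^\star - t\bm{1}\|^2$ is minimized at $t=0$ by uniqueness of the primal minimizer; therefore $\bm{1}^\top\bm{\phi}^\star = \bm{1}^\top\bm{y}$, and the normalization $\bm{1}^\top\tilde{\bm{\phi}}^\star = \bm{1}^\top\bm{y}$ then yields $c=0$ and $\tilde{\bm{\phi}}^\star = \bm{\phi}^\star$. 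For the slopes, since $j$ itself attains $\min_i v^\star_{(i,j)}$, the norm-minimizing choice of $\kappa_j$ gives $\|\tilde{\bm{\xi}}^\star_j\| = \|\bm{\xi}^\star_{\kappa_j}\| \le \|\bm{\xi}^\star_j\|$ for all $j$, so $f(\tilde{\bm{\phi}}^\star,\tilde{\bm{\xi}}^\star) \le f(\bm{\phi}^\star,\bm{\xi}^\star)$; combined with feasibility of $(\tilde{\bm{\phi}}^\star,\tilde{\bm{\xi}}^\star)$ from the first part and strict convexity of $f$ (unique minimizer), this forces $(\tilde{\bm{\phi}}^\star,\tilde{\bm{\xi}}^\star) = (\bm{\phi}^\star,\bm{\xi}^\star)$, i.e.\ $\bm{\psi}$ fixes the optimum.

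None of these steps is computationally heavy; the point I expect to require the most care is the role of the tie-breaking rule defining $\kappa_j$. One must exploit that $j$ lies in $\arg\min_i v^\star_{(i,j)}$ — a consequence of the $v_{(j,j)}=0$ convention together with primal feasibility — in order to conclude $\|\tilde{\bm{\xi}}^\star_j\|\le\|\bm{\xi}^\star_j\|$; this monotonicity of the regularizer under $\bm{\psi}$ is precisely what, together with strict convexity, upgrades ``feasible with no larger objective'' to ``equal to the optimum.'' A secondary bookkeeping point is that feasibility must be established for arbitrary input (first part) before it can legitimately be invoked for $(\tilde{\bm{\phi}}^\star,\tilde{\bm{\xi}}^\star)$ in the fixed-point argument.
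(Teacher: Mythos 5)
Your proposal is correct and follows essentially the same route as the paper's proof: the same piecewise-linear interpolant $\varphi(\bm{x})=\max_i\{\langle\bm{x},\bm{\xi}_i\rangle+\phi_i-\langle\bm{x}_i,\bm{\xi}_i\rangle\}+c$, the same identity $\varphi(\bm{x}_j)=\phi_j-\min_i v_{(i,j)}+c$ giving feasibility via the subgradient inequality, and the same fixed-point argument ($\bm{\nu}=\bm{0}$ from feasibility, $c=0$ from the mean-matching property of the optimal $\bm{\phi}^\star$, and the norm-minimizing tie-break plus strict convexity to force $\tilde{\bm{\xi}}^\star=\bm{\xi}^\star$). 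No gaps; your added care about the tie-breaking rule and the invariance of the constraints under $\bm{\phi}\mapsto\bm{\phi}+t\bm{1}$ only makes explicit what the paper leaves implicit.
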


As discussed in Section~\ref{subsec:active-set-aug}, each $(\phi_i,\bm\xi_i)$ defines a hyperplane $H_i:y=\langle \bm x-\bm x_i,\bm\xi_i\rangle+\phi_i $ containing $P_i=(\bm x_i,\phi_i)$. However, $(\bm\phi,\bm\xi)$ may not satisfy the convexity constraint (i.e., $P_j$ may not lie above $H_i$); and $|\nu_j|$ quantifies how far $P_j$ lies below the piecewise maximum of $H_i$s. Intuitively, $(\tilde{\bm\phi},\tilde{\bm\xi})$ attains feasibility by taking a piecewise maximum of these hyperplanes and then adjusts itself by a constant in an attempt to decrease the primal objective.

\medskip

\noindent {\bf Duality Gap:} In light of strong duality between \eqref{eqn:main-primal} and \eqref{eqn:main-dual}, we have 
$L(\bm{\lambda}^\star)=-f(\bm{\phi}^\star,\bm{\xi}^\star)$. 

From a dual feasible solution $\B\lambda$, we can obtain a primal variable  $(\M\phi, \M\xi)$ via~\eqref{def:primal-dual}. If $\M\psi(\M\phi, \M\xi)$ is a primal {\emph{feasible}} solution obtained from $(\M\phi, \M\xi)$ by the operation defined in~\eqref{eqn:efn-feasible}, then we can compute a duality gap via:
$$\underline{L}(\bm{\lambda})=-f(\M\psi(\M\phi, \M\xi)) \leq L^\star\leq L(\bm{\lambda}).$$
The gap $L(\bm{\lambda})-\underline{L}(\bm{\lambda})$ equals zero if and only if $\B{\lambda}$ is an optimal solution for \eqref{eqn:main-dual}.

\section{Numerical Experiments}\label{sec:experiments}
We present numerical experiments to study the different variants of our algorithm; and compare it with current approaches. As our focus is on large-scale instances of~\eqref{eqn:origin}, we study both real and synthetic datasets in the range $n \sim 10^4$-$10^5$ and $4\leq d\leq 20$.

\medskip

\noindent {\bf Datasets:} We consider the following synthetic and real datasets for the experiments.

\smallskip

\noindent {\it Synthetic Data:} 
Following~\cite{mazumder2018computational}, we generate data 
via the model $y_i = \phi^0(\bm{x}_i)+\epsilon_i, i\in[n]$ where, 
 $\phi^0(\B{x})$ is a convex function, 
$\epsilon_{i} \stackrel{\text{iid}}{{\sim}} N(0, \gamma^2)$ and $\gamma$ is chosen to match a specified value of
signal-to-noise ratio, $\mathrm{SNR}=\| \B\phi^0\|^2/ \|\B\epsilon\|^2=3$.
The covariates are drawn independently from a uniform distribution on $[-1,1]^d$.
Every feature is normalized to have zero mean and unit $\ell_2$-norm; and $\B{y}$ has zero mean and unit $\ell_{2}$-norm. We consider the following choices of the underlying true convex function $\phi^0(\B{x})$.

\smallskip

\begin{compactitem}
    \item SD1: $\phi^0(\bm{x})=\norm{\bm{x}}_2^2$
    \item SD2: $\phi^0(\bm{x})=\max_{1\leq i\leq 2d}\{\bm{\xi}_i^\top \bm{x}\}$, where $\bm{\xi}_i\in\mathbb{R}^d$ are independently drawn from the uniform distribution $[-1,1]^{d}$.

\end{compactitem}

\medskip

\noindent {\it Real Data:} 
We also consider the following real datasets in our experiments:

{{\begin{compactitem}
        \item RD1, RD2: These two datasets, studied in~\cite{kaya2019predicting} have $n=10,000$ and $d=4$. 
        \item RD3, RD4: These two datasets, studied in~\cite{zhang2017cautionary} have $n=10,000$ and $d=4$.
        \item RD5: This dataset, studied in~\cite{liang2015assessing}, has $n=10,000$ and $d=4$.
        \item RD6: This dataset, studied in~\cite{kaya2012local,tufekci2014prediction}, has $n=5,000$ and $d=4$.
        \item RD7: This dataset, from earlier work~\cite{mekaroonreung2012estimating,mazumder2018computational,bertsimas2021sparse} has $n=30,000$ and $d=4$. 
        \item RD8: This dataset, from~\cite{ramsey2012statistical,hannah2013multivariate,balazs2016convex}, has $n=10,000$ and $d=4$.
\end{compactitem}}}
For additional details on the real datasets see \Cref{app:real-data-details}. In all above datasets, covariates and response are centered and scaled so that each variable has unit $\ell_2$-norm.

\medskip
\noindent {\bf Algorithms:}  
We compare our approach versus the cutting plane (CP) method~\cite{bertsimas2021sparse} {and the ADMM method~\cite{mazumder2018computational} using the authors' implementations. All algorithms are run with the time limit of 12 hours}\footnote{{For the cutting plane method which uses Gurobi, it can take a long time to solve the reduced sub-problem thereby exceeding the allocated 12 hr time-limit before obtaining an accurate solution.}}. 
{For our algorithms, we consider two-stage methods:
In the first stage, we use random rules 2/4 for augmentation and perform inexact optimization over the active set---empirically, this results in good initial progress in the objective value but then the progress slows down as the augmentation rules include very few violated constraints. When the number of added constraints is less than $0.005n$ for consecutive 5 iterations\footnote{This is a choice we used in our experiments, and can be tuned in general for performance benefits.} we switch to the second stage where we use occasional greedy rules 1/5 for augmentation and exact optimization of sub-problems.}
Note that our theory (Section~\ref{sec:algo}) guarantees convergence of this two-stage procedure. 
Additional details on algorithm parameter choices can be found in \Cref{subsec:algo-params}. {In this section, we will use Rule~$a$-$b$ ($a\in\{2,4\},b\in\{1,5\}$) to denote different variants of our algorithm with random rule $a$ (Stage~1) and greedy rule $b$ (Stage~2).}

\medskip

\noindent {\bf Software Specifications:} 
All computations were carried out on MIT's Engaging Cluster on an Intel Xeon 2.30GHz machine, with one CPU and 8GB of RAM.
{{For ADMM and CP, we used a larger amount of memory 64GB RAM.}} Our algorithms are written in Julia (v1.5), and 
our code is available on github at:

~~~~~~~~~~~~~~~~~~~\url{https://github.com/wenyuC94/ConvexRegression}

\smallskip

\noindent\textbf{Performance of proposed algorithms: } \Cref{fig:profile} presents the relative objective of the dual, defined as
 $$\text{Rel. Obj.} = {(L(\bm\lambda^t)-L^\star)}/(|L^\star|+1)$$
as well as the primal infeasibility (\texttt{pinfeas})---also used in \cite{mazumder2018computational,bertsimas2021sparse}---defined as\footnote{{For CP~\cite{bertsimas2021sparse} and ADMM~\cite{mazumder2018computational}, \text{Rel. Obj.} is defined as $|f(\bm\phi^t,\bm\xi^t)-f^\star|/(1+|f^\star|)$, where $f^\star=-L^\star$; \texttt{pinfeas} is defined as $\|(\bm A\bm\phi^t+\bm B\bm\xi^t)_-\|/n$. Here $\bm a_+$ and $\bm a_-$ denotes the vector of $[\max\{a_j,0\}]_{j}$ and $[\min\{a_j,0\}]_j$, respectively.}}
$$\texttt{pinfeas} = \frac1n\|(\nabla L(\bm\lambda^t))_+\|$$ 
for different algorithms versus time (in seconds).
Above, $L^\star$ is taken as the minimum objective among solutions obtained by all the algorithms running for {12 hours}.
\Cref{fig:profile} considers synthetic datasets SD1 and SD2 with different $n$'s and $d$'s, and the real dataset RD1. {We note that the choices of $\rho$ displayed in \Cref{fig:profile} correspond to good statistical performance---this is discussed in further detail below and in \Cref{fig:rmse}. \Cref{fig:profile} suggests that our algorithms perform better than CP and ADMM -- both in terms of Rel. Obj. and primal infeasibility. 
Note that ADMM~\cite{mazumder2018computational} does not use active-set methods and consumes prohibitively large memory when $n\geq 30,000$.}

{\Cref{table:runtime} presents a snapshot of the runtimes of our proposed algorithms, CP and ADMM.} Here, the runtime corresponds to the time (s) taken by an algorithm to achieve a $0.05$ relative objective. Our proposed two-stage algorithms can achieve this accuracy quite quickly, while CP and ADMM are often unable to converge to that accuracy. We observe that for our algorithms, runtime generally increases with smaller $\rho$-values. 
\Cref{table:runtime-largen} presents results for larger datasets with $n=100,000$---we show two of our methods and do not include greedy augmentation Rule~1 due to large computational costs. As expected, for such large problems randomized augmentation rules play a key role, and our method is better at obtaining solutions with low to moderate level of precision.

\begin{figure}[h!]
    \centering
\resizebox{0.98\textwidth}{!}{\begin{tabular}{r c r c}
\multicolumn{4}{c}{ {Objective Profiles for SD1, $n=10,000$, $d=4$ and $\rho=10^{-4}$}}\\
\rotatebox{90}{  { {~~~~~~~~~~~~~Rel. Obj.}}}&\includegraphics[width =0.48 \textwidth,trim = 1cm 0cm 2.25cm 2cm,clip ]{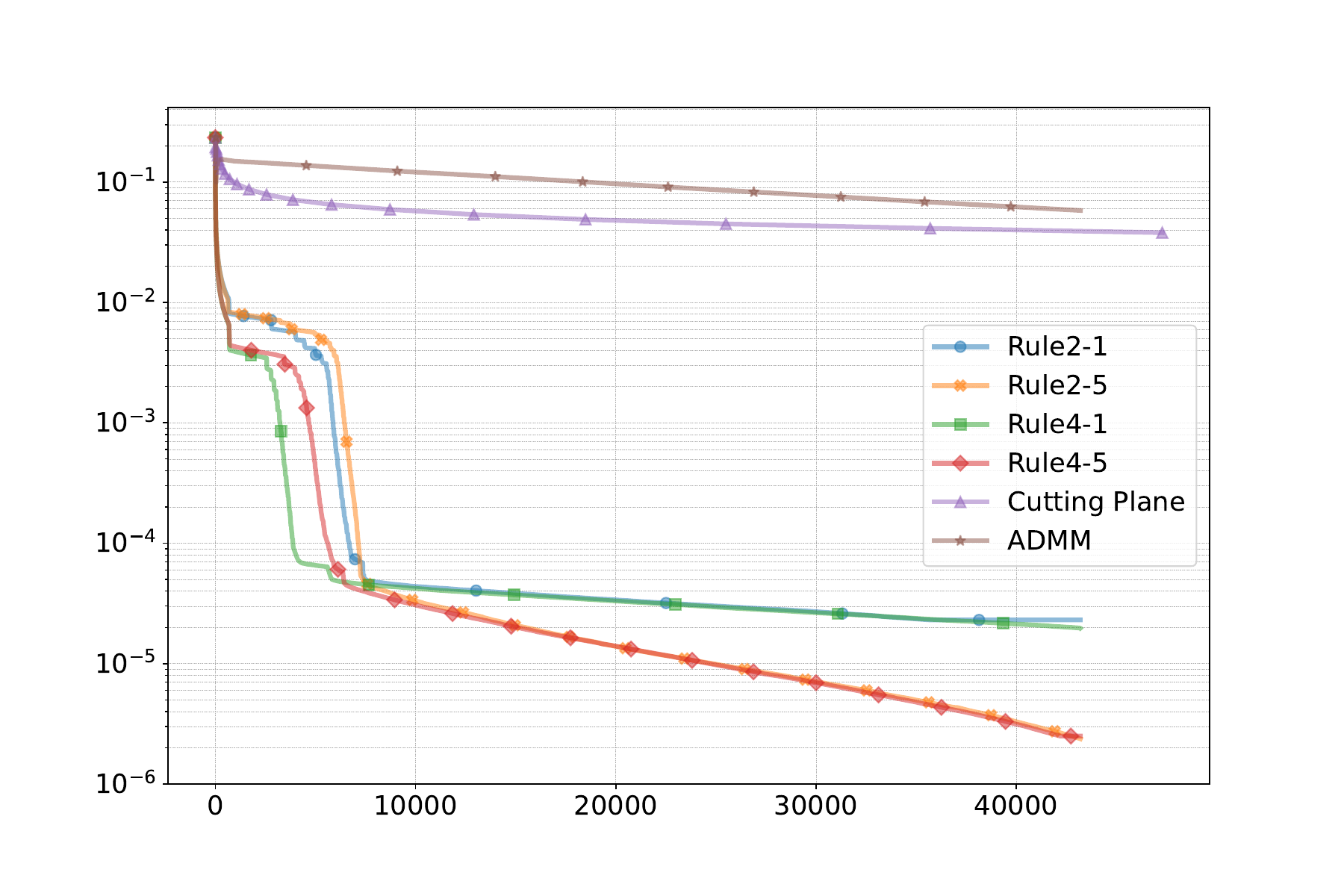}&\rotatebox{90}{  { {~~~~~~~~~~~~~\texttt{pinfeas}}}}&\includegraphics[width =0.48 \textwidth,trim = 1cm 0cm 2.25cm 2cm,clip ]{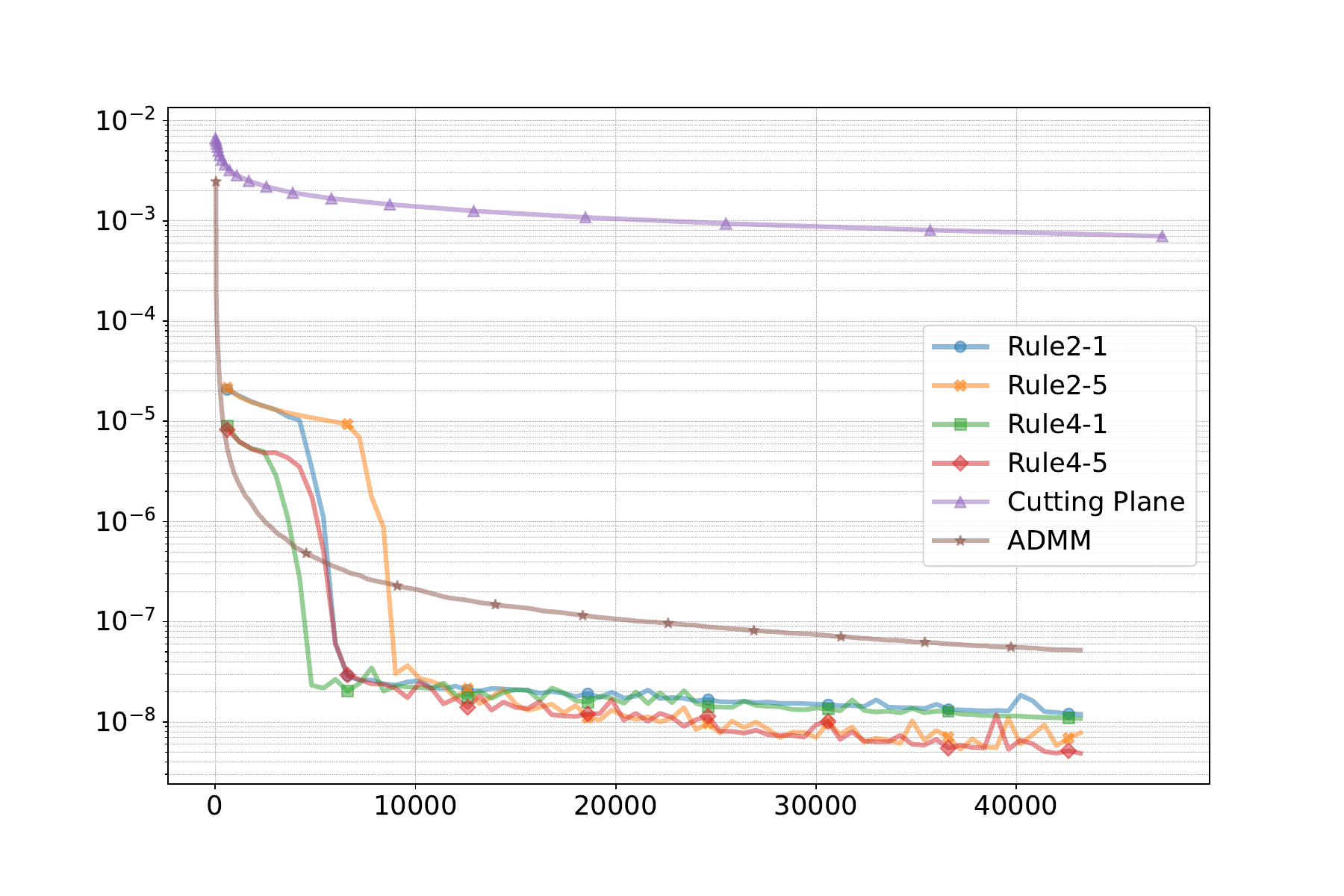}\\
  \multicolumn{4}{c}{ {Objective Profiles for SD1, $n=30,000$, $d=4$ and $\rho=10^{-4}$}} \\
\rotatebox{90}{  { {~~~~~~~~~~~~~Rel. Obj.}}}&\includegraphics[width =0.48 \textwidth,trim = 1cm 0cm 2.25cm 2cm,clip ]{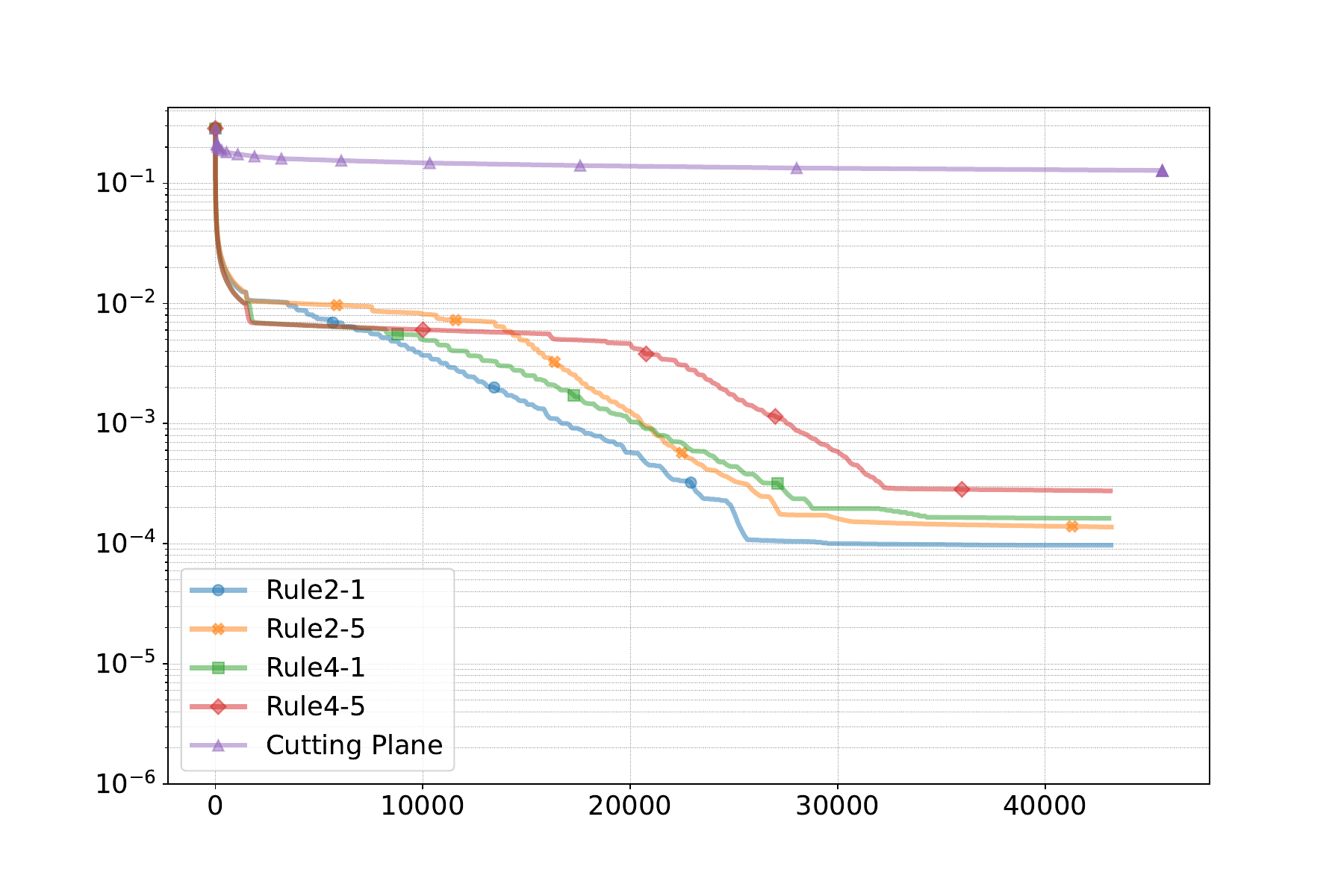}&\rotatebox{90}{  { {~~~~~~~~~~~~~\texttt{pinfeas}}}}&\includegraphics[width =0.48 \textwidth,trim = 1cm 0cm 2.25cm 2cm,clip ]{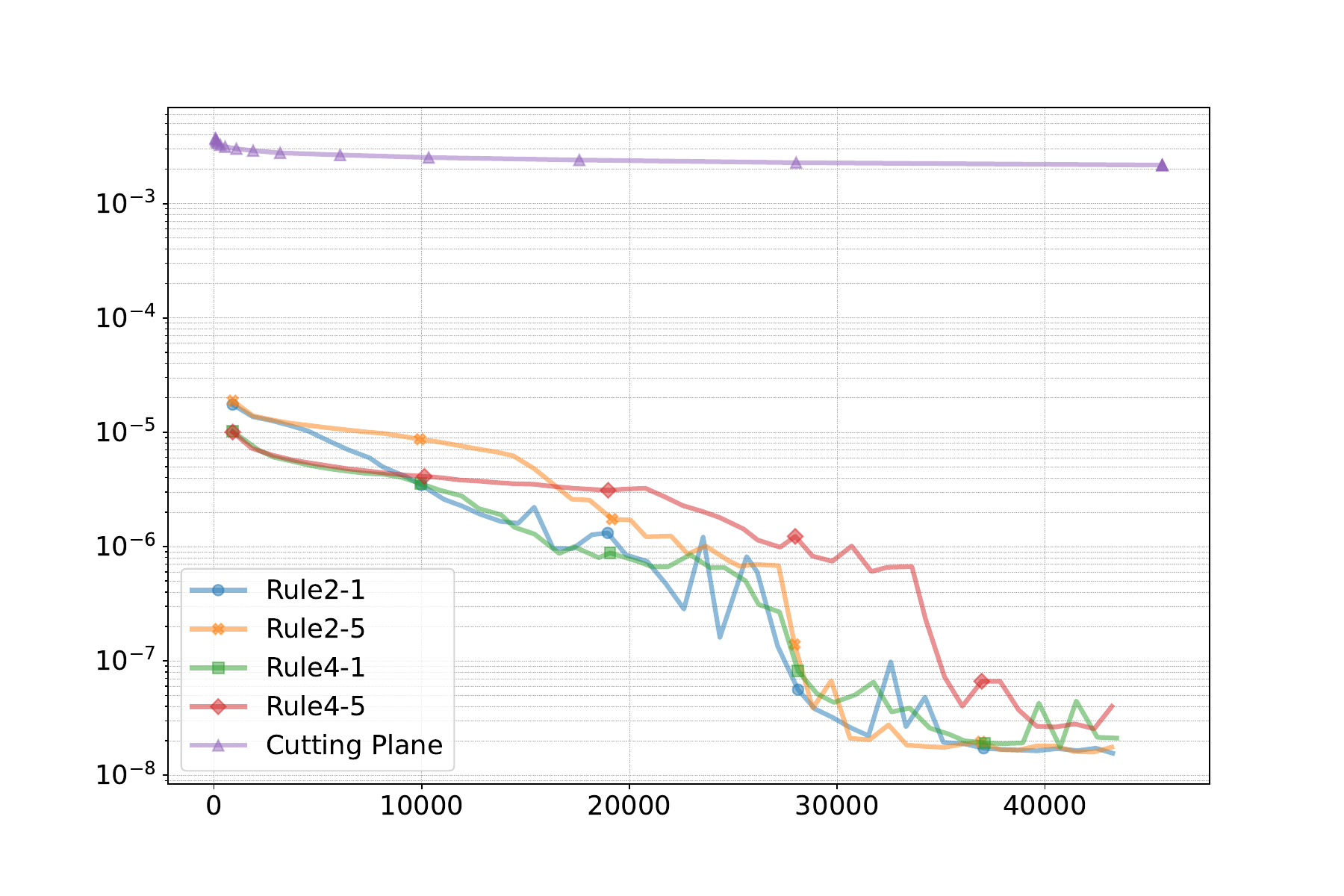}\\
 \multicolumn{4}{c}{ {Objective Profiles for SD2, $n=30,000$, $d=4$ and $\rho=10^{-4}$}} \\
\rotatebox{90}{  { {~~~~~~~~~~~~~Rel. Obj.}}}&\includegraphics[width =0.48 \textwidth,trim = 1cm 0cm 2.25cm 2cm,clip ]{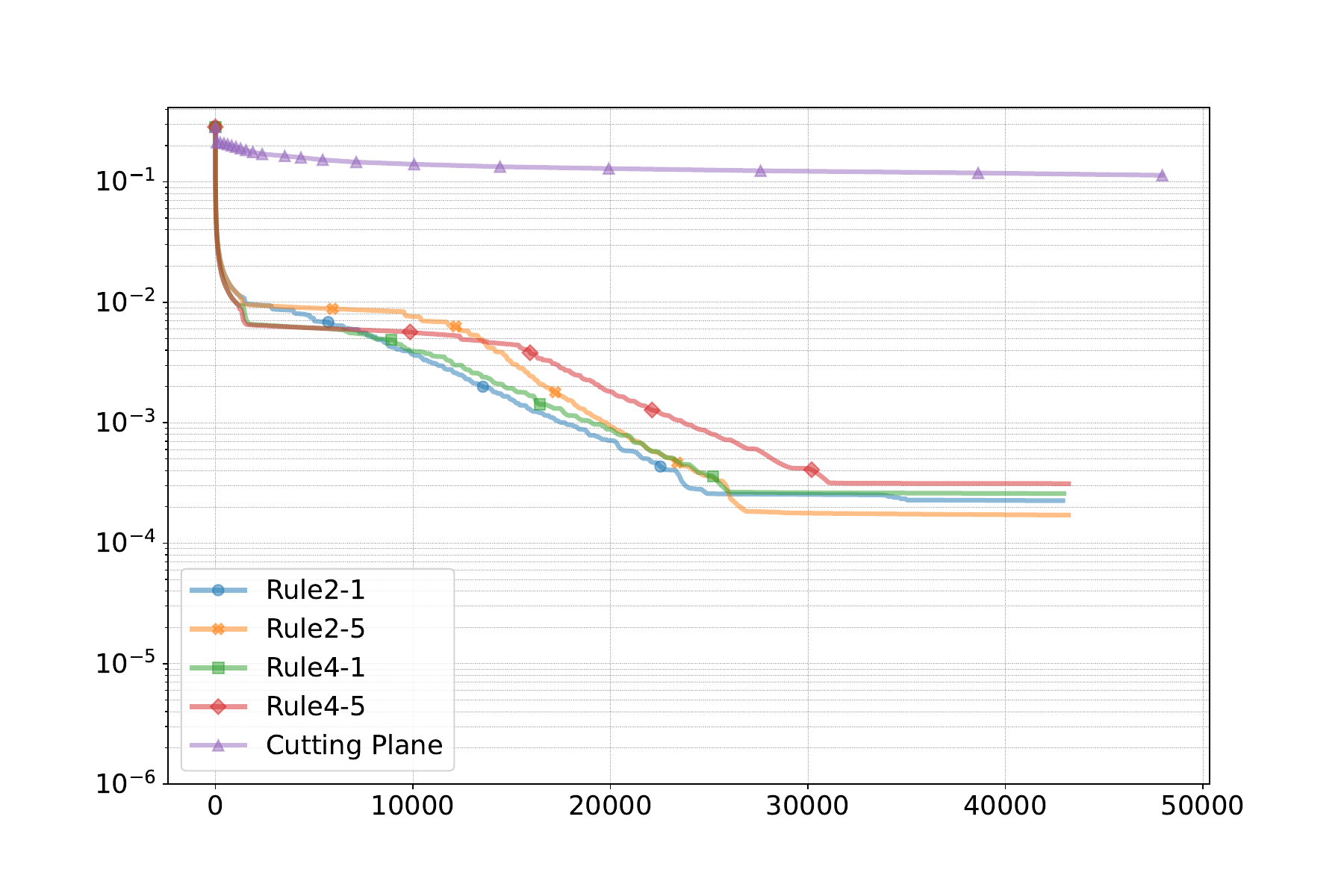}&\rotatebox{90}{  { {~~~~~~~~~~~~~\texttt{pinfeas}}}}&\includegraphics[width =0.48 \textwidth,trim = 1cm 0cm 2.25cm 2cm,clip ]{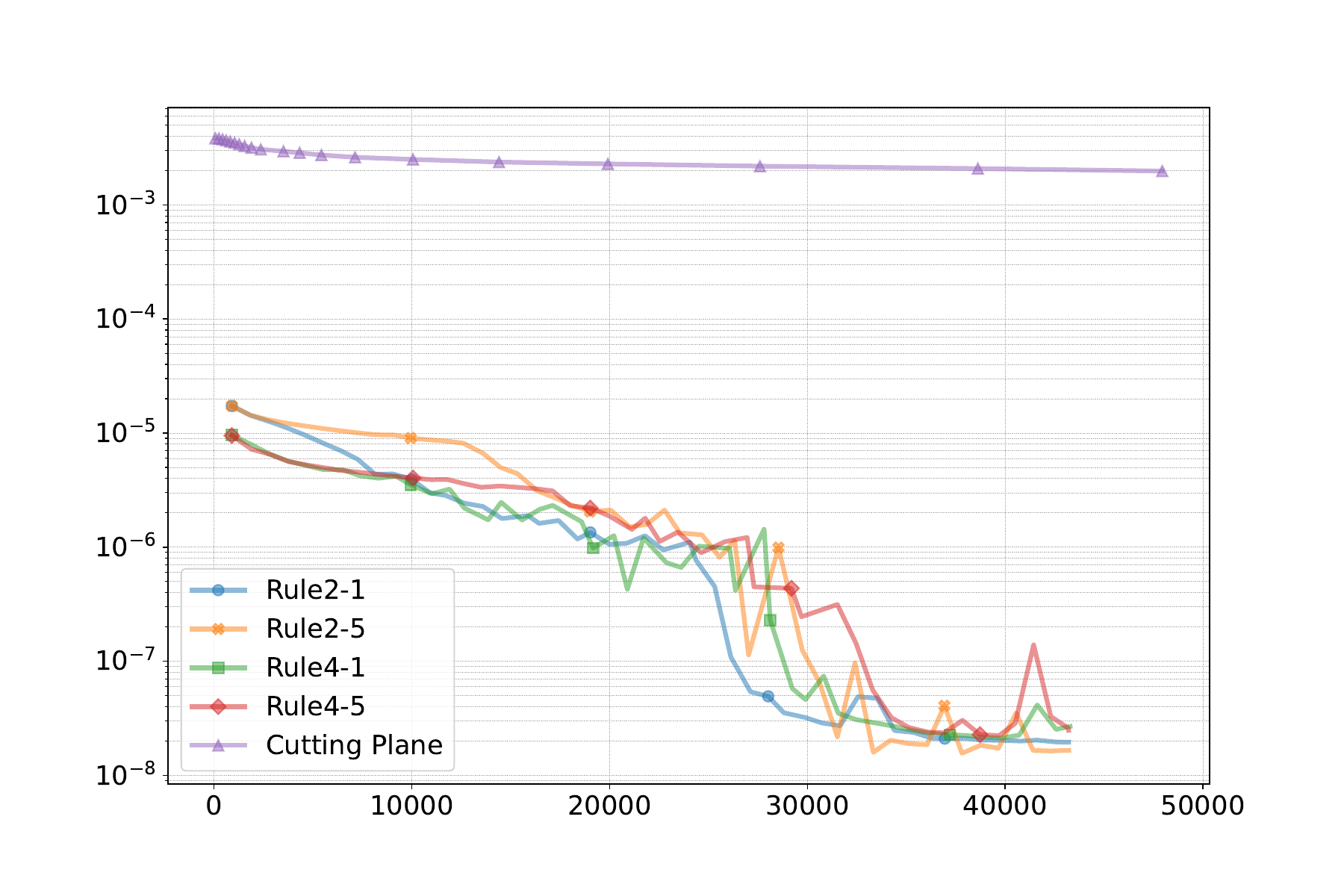}\\
\multicolumn{4}{c}{ {Objective Profiles for RD1, $n=10,000$, $d=4$ and $\rho=10^{-5}$}} \\
\rotatebox{90}{{{~~~~~~~~~~~~~~~~~Rel. Obj.}}}&\includegraphics[width =0.48 \textwidth,trim = 1cm 0cm 2.25cm 2cm,clip ]{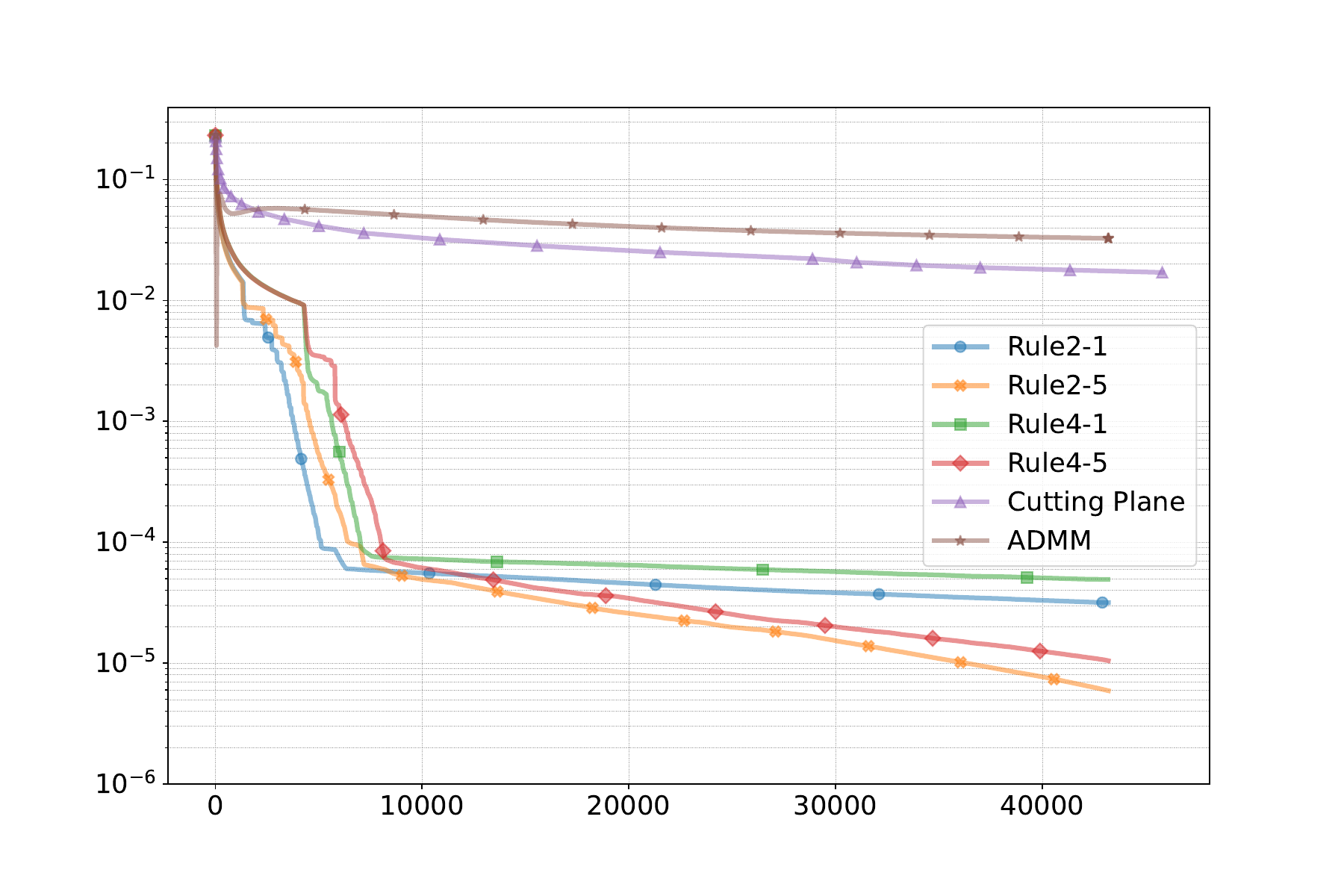}&\rotatebox{90}{  { {~~~~~~~~~~~~~\texttt{pinfeas}}}}&\includegraphics[width =0.48 \textwidth,trim = 1cm 0cm 2.25cm 2cm,clip ]{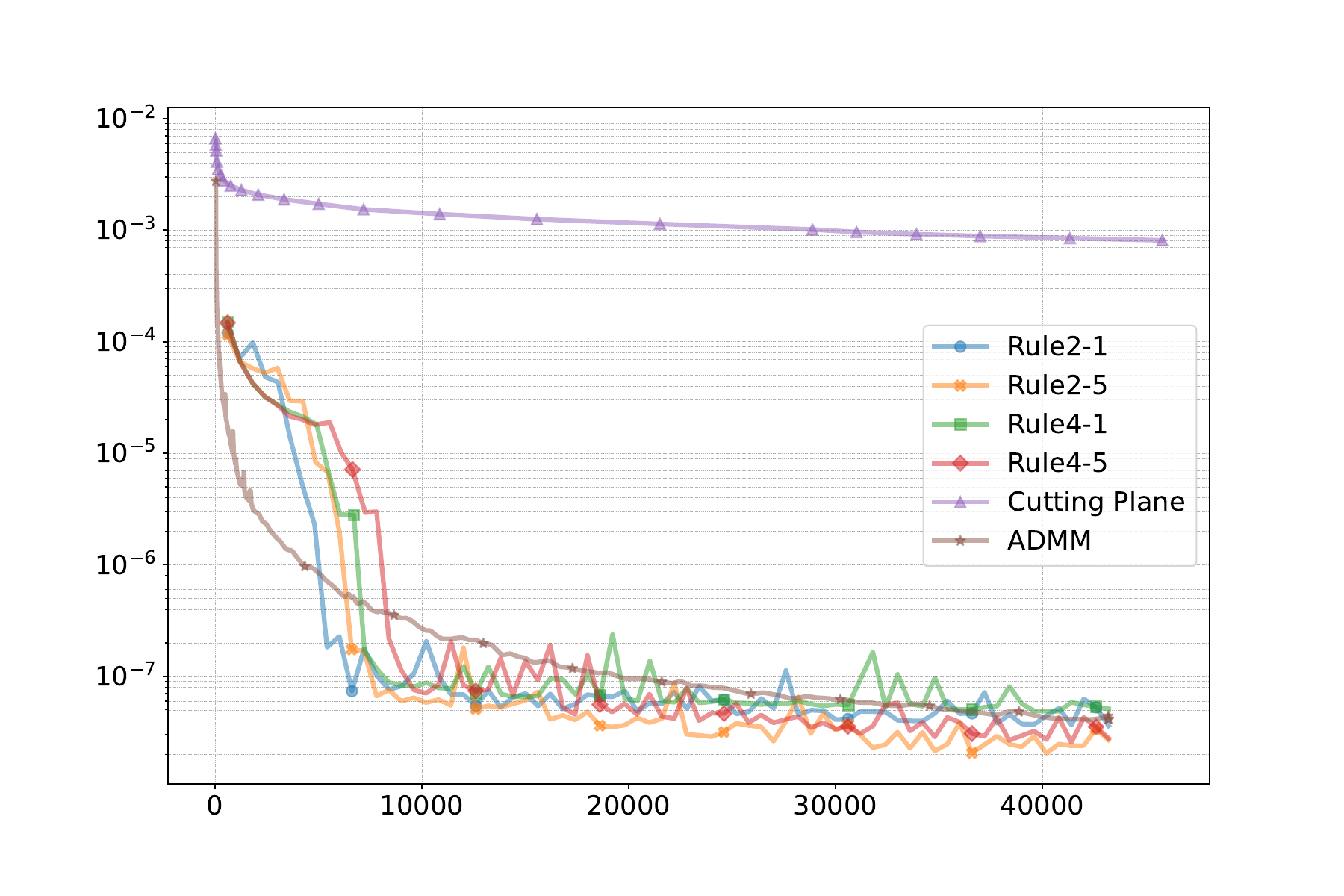}\\
& {\small{time (s)}} & & {\small{time (s)}} 
    \end{tabular}
    }
    \caption{{\small{Plots (in log-scale) of Relative Objective [left panel] and primal infeasibility [right panel]
   versus time (secs). We consider three synthetic data sets (top 3 rows) and a real dataset (bottom row). 
    We compare our algorithms against the cutting plane method~\cite{bertsimas2021sparse} and the ADMM method~\cite{mazumder2018computational}. For each algorithm, we run 5 repetitions, each bold line corresponds to the median of the profiles of one algorithm.
      The ADMM profiles (2nd, 3rd rows) are missing as they run out of memory (64GB).}}} 
    \label{fig:profile}
\end{figure}

\begin{table}[!ht]
\centering
\caption{{\small{Comparison of runtime (s) of our algorithms versus CP~\cite{bertsimas2021sparse} and ADMM~\cite{mazumder2018computational}. Here runtime refers to the time taken to achieve a Rel. Obj. of $5\times10^{-2}$. We report the median runtime and standard error (in bracket) across 5 replications (random instances). Note:
 `-' means that no replication of the algorithm achieves this level of relative accuracy within 4hrs, 
 `-*' means that some replications encountered convergence issues and others did not reach the tolerance within 4hrs,  `**' means all replications crash due to either numerical/memory problems. The entry 4320.90* (column=CP and 
 row=RD4) means that some of replications crashed due to numerical/memory issues, and we report the median runtime for the replications that did not crash.}}}
\label{table:runtime}
\resizebox{1.0\textwidth}{!}{
\begin{tabular}{lrrr|rrrrrr}
\toprule
data &      $\tfrac{n}{10^4}$ &    $d$ &  {\small{$\log_{10}\rho$}} &          Rule2-1 &          Rule2-5 &          Rule4-1 &          Rule4-5 &  CP~\cite{bertsimas2021sparse}  & ADMM~\cite{mazumder2018computational}\\
\midrule
 SD1 &  3 &   4 &      -3 &       3.81 {\footnotesize (0.95)} &       3.67 {\footnotesize (0.66)} &       2.33 {\footnotesize (0.25)} &       2.41 {\footnotesize (0.24)} &   -  & **\\
 SD1 &  3 &   4 &      -4 &      52.66 {\footnotesize (1.93)} &     52.35 {\footnotesize (35.44)} &      44.14 {\footnotesize (9.56)} &     43.71 {\footnotesize (45.87)} &   -  & **\\
 SD1 &  3 &  10 &      -3 &     31.56 {\footnotesize (10.62)} &     21.06 {\footnotesize (12.57)} &      10.64 {\footnotesize (8.69)} &      11.50 {\footnotesize (2.25)} &   - & **\\
 SD1 &  3 &  10 &      -4 &  1290.51 {\footnotesize (114.15)} &  1289.37 {\footnotesize (355.51)} &   364.57 {\footnotesize (202.95)} &   362.25 {\footnotesize (192.42)} &  -* & **\\
 SD1 &  3 &  20 &      -3 &    101.14 {\footnotesize (72.88)} &    180.22 {\footnotesize (64.26)} &     95.69 {\footnotesize (25.63)} &     93.91 {\footnotesize (28.92)} &   - & **\\
 SD1 &  3 &  20 &      -4 &  2767.88 {\footnotesize (373.64)} &  2774.59 {\footnotesize (416.69)} &   939.66 {\footnotesize (222.82)} &   898.22 {\footnotesize (297.24)} &   - & ** \\
 \midrule
 SD2 &  3 &   4 &      -3 &       4.29 {\footnotesize (3.22)} &       3.51 {\footnotesize (0.32)} &       2.33 {\footnotesize (0.14)} &       2.52 {\footnotesize (1.32)} &   - & **\\
 SD2 &  3 &   4 &      -4 &      39.22 {\footnotesize (1.83)} &     44.87 {\footnotesize (21.84)} &     37.05 {\footnotesize (21.41)} &     37.14 {\footnotesize (21.96)} &   -& ** \\
 SD2 &  3 &  10 &      -3 &     18.76 {\footnotesize (12.56)} &      19.00 {\footnotesize (3.86)} &      10.60 {\footnotesize (0.77)} &      14.00 {\footnotesize (8.59)} &   - & **\\
 SD2 &  3 &  10 &      -4 &   975.48 {\footnotesize (344.82)} &   978.28 {\footnotesize (330.26)} &   404.70 {\footnotesize (223.14)} &   403.52 {\footnotesize (217.71)} &  -*& ** \\
 SD2 &  3 &  20 &      -3 &     124.51 {\footnotesize (9.55)} &     124.22 {\footnotesize (9.39)} &     66.80 {\footnotesize (49.30)} &     66.01 {\footnotesize (49.05)} &   - & **\\
 SD2 &  3 &  20 &      -4 &  4292.47 {\footnotesize (365.84)} &  4693.06 {\footnotesize (434.83)} &  1400.07 {\footnotesize (249.20)} &  1187.88 {\footnotesize (119.68)} &   - & **\\
 \midrule
 RD1 &  1 &  4 &      -4 &    28.74 {\footnotesize (2.72)} &    28.68 {\footnotesize (2.81)} &    22.16 {\footnotesize (3.96)} &    21.72 {\footnotesize (4.12)} &   6827.02 {\footnotesize (963.13)} &                 - \\
 RD1 &  1 &  4 &      -5 &  168.99 {\footnotesize (13.11)} &  161.28 {\footnotesize (17.96)} &  276.40 {\footnotesize (20.29)} &  273.31 {\footnotesize (20.43)} &   3704.07 {\footnotesize (876.34)} & 9729.61 {\footnotesize (609.72)}\\
 RD2 &  1 &  4 &      -4 &    47.16 {\footnotesize (4.77)} &    61.88 {\footnotesize (3.99)} &    24.19 {\footnotesize (1.81)} &    23.02 {\footnotesize (2.65)} &                  - \\
 RD2 &  1 &  4 &      -5 &  264.68 {\footnotesize (21.88)} &  255.80 {\footnotesize (21.92)} &  115.46 {\footnotesize (43.57)} &  117.49 {\footnotesize (41.41)} &                 ** &     248.71 {\footnotesize (9.77)} \\
 RD3 &  1 &  4 &      -4 &    18.16 {\footnotesize (2.07)} &    18.01 {\footnotesize (2.06)} &    11.09 {\footnotesize (2.04)} &    11.31 {\footnotesize (2.09)} &                  - &                 - \\
 RD3 &  1 &  4 &      -5 &    83.41 {\footnotesize (4.48)} &    70.47 {\footnotesize (6.94)} &    75.07 {\footnotesize (6.94)} &    74.31 {\footnotesize (6.97)} &  7805.34 {\footnotesize (1325.51)} &     244.73 {\footnotesize (4.75)}  \\
 RD4 &  1 &  4 &      -4 &    17.59 {\footnotesize (0.76)} &    18.04 {\footnotesize (0.85)} &    17.72 {\footnotesize (1.08)} &    15.92 {\footnotesize (1.19)} &                  -   &                 -\\
 RD4 &  1 &  4 &      -5 &   128.86 {\footnotesize (4.95)} &   128.51 {\footnotesize (5.00)} &   107.64 {\footnotesize (6.75)} &   122.39 {\footnotesize (5.62)} &       4320.90* &     261.39 {\footnotesize (8.66)} \\
 RD5 &  1 &  4 &      -4 &    11.98 {\footnotesize (0.82)} &    11.76 {\footnotesize (0.84)} &     7.72 {\footnotesize (0.53)} &     7.70 {\footnotesize (0.55)} &                  - &                 -\\
 RD5 &  1 &  4 &      -5 &   104.33 {\footnotesize (7.92)} &   114.19 {\footnotesize (9.29)} &    83.22 {\footnotesize (5.31)} &    81.75 {\footnotesize (2.51)} &                  - &     248.79 {\footnotesize (7.82)}\\
 RD6 &  $\tfrac{1}{2}$ &  4 &      -4 &     3.59 {\footnotesize (0.22)} &     3.15 {\footnotesize (0.27)} &     2.61 {\footnotesize (0.25)} &     2.34 {\footnotesize (0.26)} &      233.47 {\footnotesize (6.98)} &  1707.11 {\footnotesize (115.08)} \\
 RD6 &   $\tfrac{1}{2}$ &  4 &      -5 &     9.35 {\footnotesize (0.68)} &    10.81 {\footnotesize (0.53)} &     7.41 {\footnotesize (1.05)} &     6.92 {\footnotesize (1.10)} &      64.06 {\footnotesize (13.64)} &      71.84 {\footnotesize (5.13)}  \\
 RD7 &  3 &  4 &      -4 &    20.84 {\footnotesize (3.08)} &    20.84 {\footnotesize (3.13)} &    17.15 {\footnotesize (3.35)} &    25.51 {\footnotesize (3.65)} &                  - &                ** \\
 RD7 &  3 &  4 &      -5 &    {40.81 {\footnotesize (7.80)} }&    { 41.31 {\footnotesize (7.73)}} &    {39.99 {\footnotesize (16.76)}} &   { 85.39 {\footnotesize (17.16)}} &                  - &                ** \\
 RD8 &  1 &  4 &      -4 &     4.87 {\footnotesize (0.12)} &     5.25 {\footnotesize (0.46)} &     4.98 {\footnotesize (0.85)} &     5.00 {\footnotesize (0.86)} &                  -  &                 -\\
 RD8 &  1 &  4 &      -5 &    37.56 {\footnotesize (4.07)} &    33.31 {\footnotesize (4.28)} &    54.85 {\footnotesize (8.00)} &    55.22 {\footnotesize (9.21)} &                  - &     235.58 {\footnotesize (5.29)}\\
\bottomrule
\end{tabular}
}
\end{table}

\begin{table}[!ht]
    \centering
      \scalebox{0.85}{
    \begin{tabular}{lrrr|rrr}
\toprule
data &      $n$ &    $d$ &  $\log_{10}\rho$ &                  Rule2-5 &                 Rule4-5 &  CP~\cite{bertsimas2021sparse}  \\
\midrule
 SD1 &  100000 &   4 &      -3 &      19.15 {\footnotesize (5.05)} &      16.50 {\footnotesize (1.12)} &   - \\
 SD1 &  100000 &   4 &      -4 &     94.52 {\footnotesize (21.63)} &    102.66 {\footnotesize (23.53)} &   - \\
 SD1 &  100000 &  10 &      -3 &      27.30 {\footnotesize (7.66)} &      22.48 {\footnotesize (6.07)} &   - \\
 SD1 &  100000 &  10 &      -4 &    598.83 {\footnotesize (54.45)} &    627.96 {\footnotesize (59.55)} &   - \\
 SD1 &  100000 &  20 &      -3 &     71.08 {\footnotesize (12.05)} &      38.84 {\footnotesize (6.29)} &   - \\
 SD1 &  100000 &  20 &      -4 &  2428.77 {\footnotesize (232.50)} &  1041.10 {\footnotesize (113.51)} &  -* \\
 \midrule
 SD2 &  100000 &   4 &      -3 &      29.89 {\footnotesize (2.62)} &      28.46 {\footnotesize (5.69)} &   - \\
 SD2 &  100000 &   4 &      -4 &    121.77 {\footnotesize (23.12)} &    111.20 {\footnotesize (19.85)} &   - \\
 SD2 &  100000 &  10 &      -3 &      27.08 {\footnotesize (2.60)} &      28.15 {\footnotesize (3.17)} &   - \\
 SD2 &  100000 &  10 &      -4 &    509.58 {\footnotesize (27.91)} &    422.83 {\footnotesize (73.77)} &   - \\
 SD2 &  100000 &  20 &      -3 &     75.59 {\footnotesize (13.97)} &      49.22 {\footnotesize (6.37)} &   - \\
 SD2 &  100000 &  20 &      -4 &  2810.69 {\footnotesize (262.19)} &  1397.47 {\footnotesize (134.45)} &   - \\
\bottomrule
\end{tabular}}
  \caption{{Runtime (s) of our algorithms, and the cutting plane (CP) method~\cite{bertsimas2021sparse} for $n=100,000$. ADMM runs out of memory (64GB) on these instances. See \Cref{table:runtime} for more details on the notations.}} \label{table:runtime-largen}
\end{table}

\noindent\textbf{Subgradient regularization can improve statistical performance:}  
As mentioned earlier, the presence of $\ell_2$-regularization on the subgradients (i.e., a value of $\rho>0$) in~\eqref{eqn:origin}, can lead to improved statistical performance of the convex function estimate when compared to the unregularized case (i.e., when $\rho=0$). {Intuitively, this is due in part to the 
behavior of the convex regression fit near the boundary of the convex hull of the covariates~\cite{balabdaoui2007consistent,mazumder2018computational}.
The $\ell_{2}$-regularization on subgradients can help regulate boundary behavior and improve performance of the estimator: see~\cite{mazumder2018computational} for theoretical support.
Here, we present some numerical evidence to support this observation.} \Cref{fig:rmse} presents the training and test root mean squared error (RMSE)\footnote{{The RMSE is computed by the following procedure: (i) obtain the primal feasible solution $(\tilde{\bm\phi},\tilde{\bm\xi})$ according to \Cref{sec:dualgap}, (ii) obtain the prediction $\hat y$ for each data point $\bm x$ via a piecewise-maximum interpolation scheme $\hat y = \max_{i} \{\tilde \phi_i+\langle \tilde\xi_i,\bm x-\bm x_i \rangle\}$; (iii) evaluate RMSE based on the predictions $\hat y$ and the observed values $y$.}} 
on some real datasets. 
To quantify the performance of the convex function estimate near the boundary of the convex hull, we compute the RMSE on the boundary points\footnote{We compute the convex hull of the training set and identify points in the test set near the boundary of this convex hull, according to the distance of each point to the convex hull.}. {\Cref{fig:rmse} shows what values of $\rho>0$ result in good statistical performance (in terms of RMSE).
In particular, values of $\rho=10^{-4}\sim10^{-5}$ generally result in good RMSE performance for the real data sets.} 
We also observe that 
$\rho=10^{-3}$-$10^{-4}$ result in good statistical performance for the synthetic data sets.
{{In practice, we recommend selecting a value of $\rho$ that minimizes RMSE on a validation dataset (or based on cross-validation).}}

\begin{figure}[ht]
    \centering
\resizebox{0.97\textwidth}{!}{\begin{tabular}{r c c}
& RD1 with $n=10,000$& RD2 with $n=10{,}000$\\
\rotatebox{90}{{{~~~~~~~~~~~~~~~~~~~RMSE}}}&\includegraphics[width =0.5 \textwidth, trim = .25cm .25cm 2cm 1.5cm,clip ]{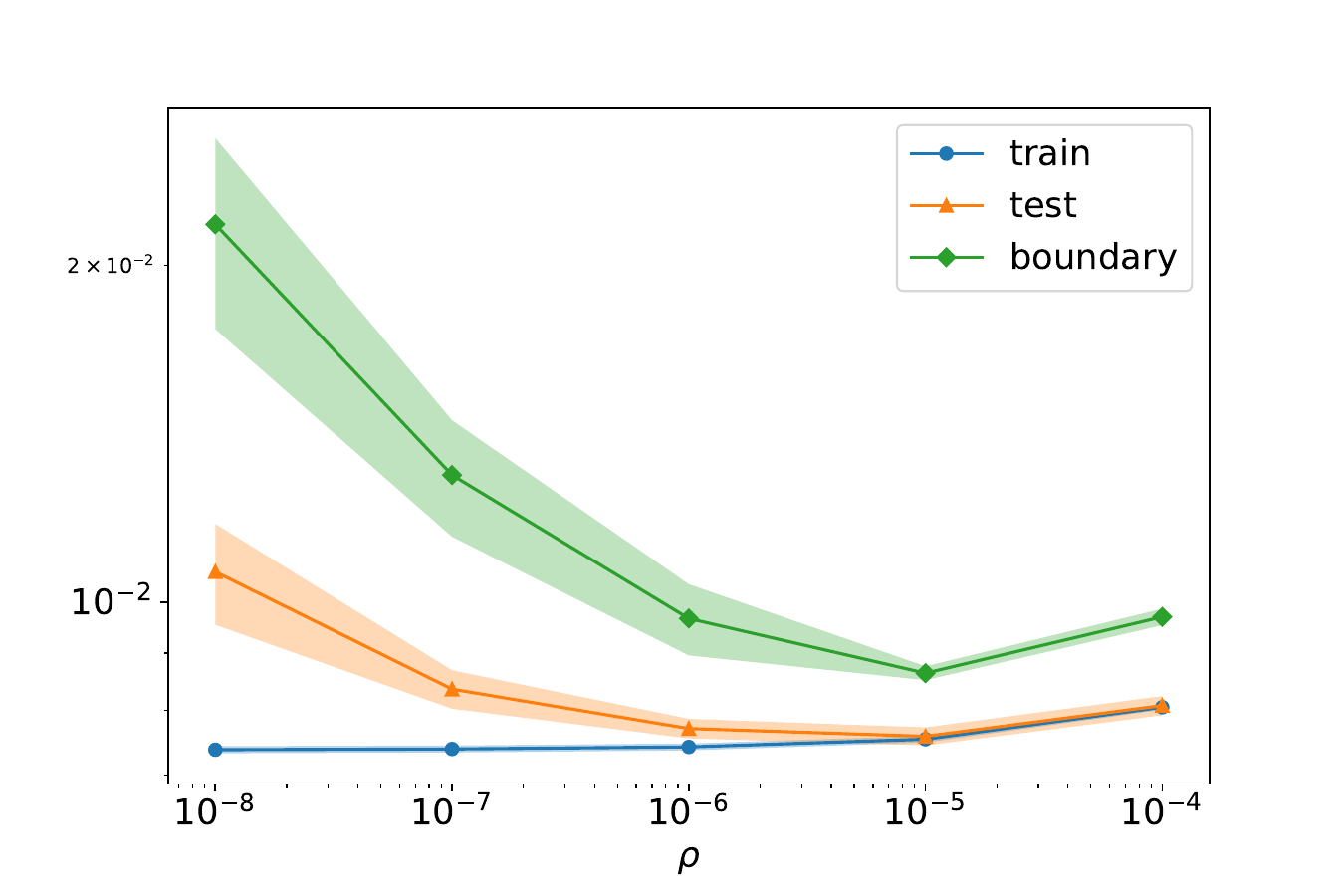}&\includegraphics[width =0.5 \textwidth, trim = .25cm .25cm 2cm 1.5cm,clip ]{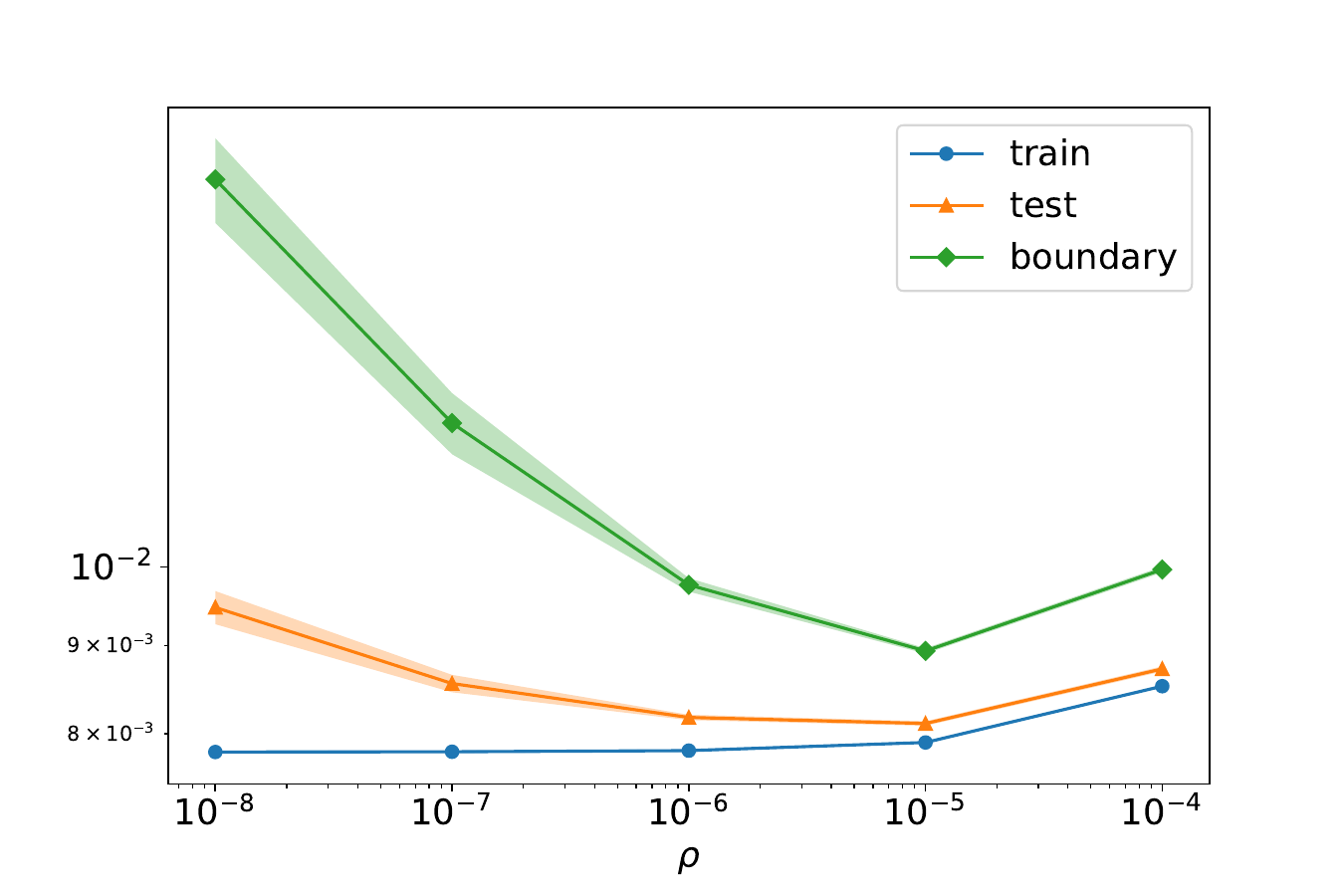}\\
& RD3 with $n=10{,}000$& RD4 with $n=10{,}000$\\
\rotatebox{90}{{{~~~~~~~~~~~~~~~~~~~RMSE}}}&\includegraphics[width =0.5 \textwidth,  trim = .25cm .25cm 2cm 1.5cm,clip ]{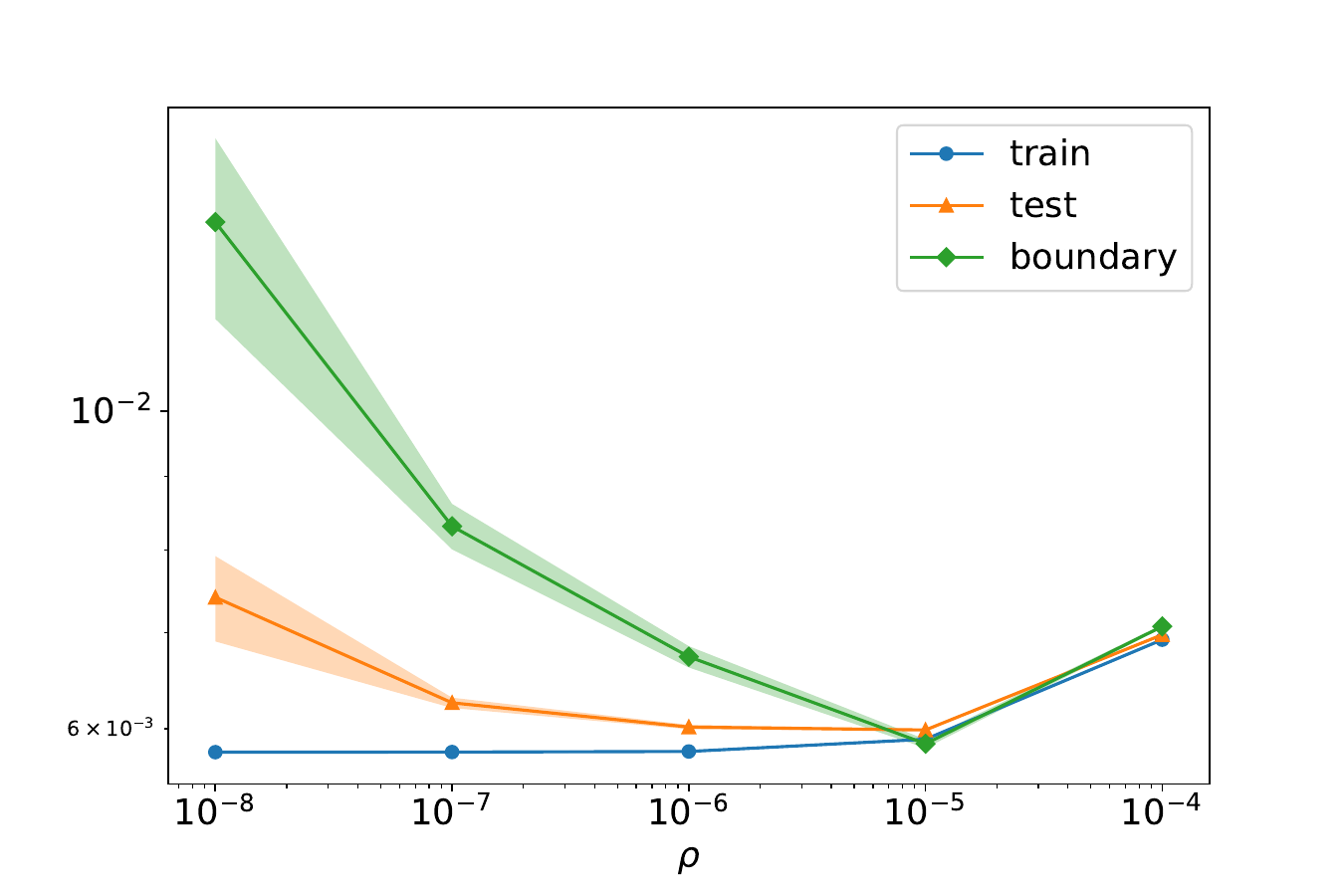}&\includegraphics[width =0.5 \textwidth, trim = .25cm .25cm 2cm 1.5cm,clip ]{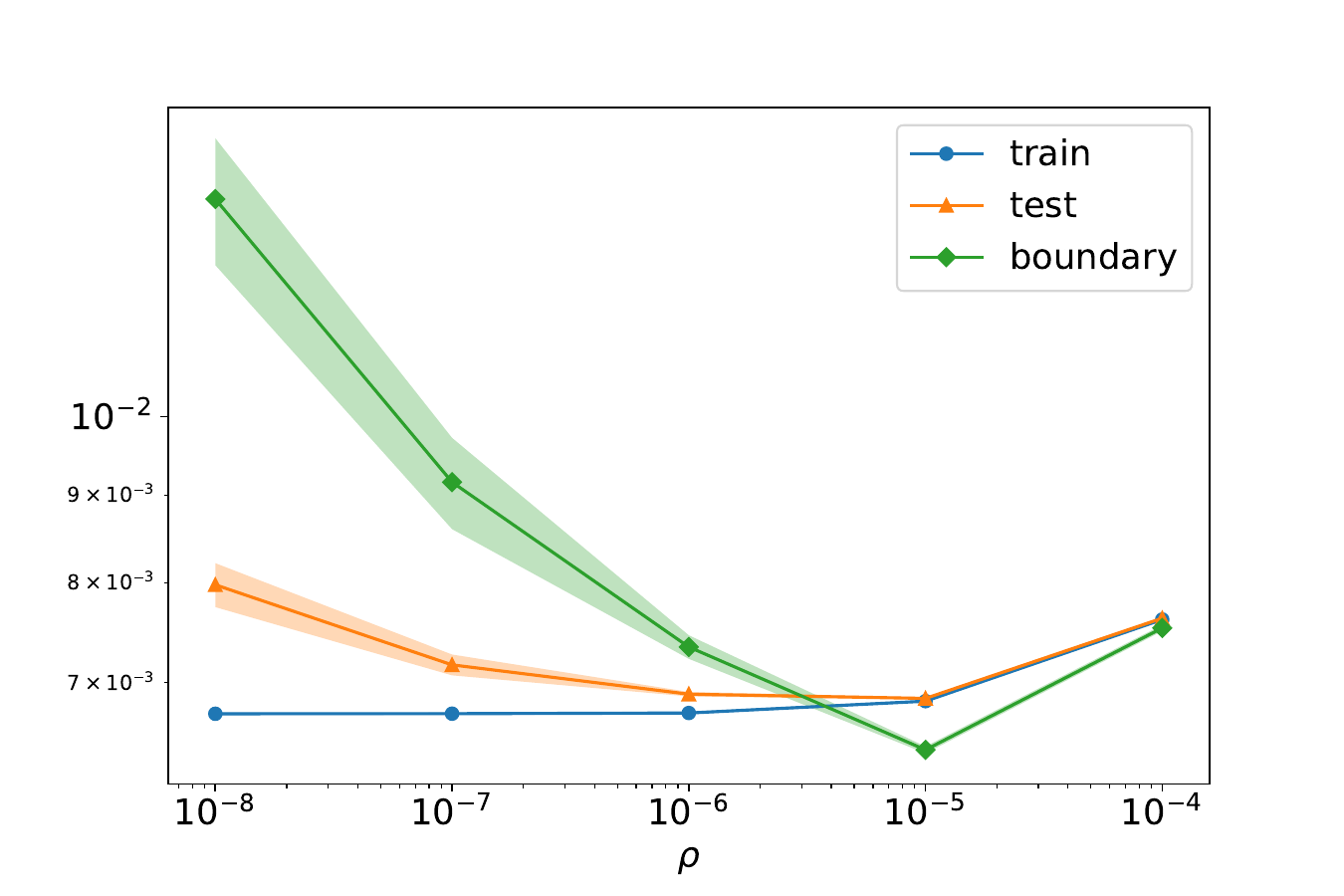}
    \end{tabular}
    }
    \caption{{\small{Plots (in log-scale) of RMSEs evaluated on training set, test set and boundary test set versus $\rho$'s for 4 real datasets. We consider ten replications (subsamples) and 
    plot the mean (markers) and standard error (error bars). The training RMSE decreases with $\rho$ and appears to stabilize when $\rho$ becomes smaller than $10^{-5}$ (approx). 
    We observe that the minimum RMSE on the test/boundary set occurs when $\rho \approx 10^{-5}$, and this value is quite close to the test RMSE at $\rho\approx 10^{-4}$. 
We study both these $\rho$-values in our runtime comparisons.}}}
    \label{fig:rmse}
\end{figure}

\section{Proofs}\label{app:proofs}

In this section, we  present the proofs of \Cref{lem:outer-conv-lem2-main} and \Cref{thm:outer-conv}. 
The proof of \Cref{thm:outer-conv} is based on \Cref{lem:outer-conv-lem2-main,lem:outer-conv-lem1}.
We first present the proof of \Cref{lem:outer-conv-lem2-main} (Sec. \ref{subsecA:proof-lem2}); then present \Cref{lem:outer-conv-lem1} (Sec. \ref{app:lemma-5-helper-thm}), followed by the proof of \Cref{thm:outer-conv}.
\subsection{Proof of \Cref{lem:outer-conv-lem2-main}}\label{subsecA:proof-lem2}
For any $\bm u\in\R^N$ and $P\leq N$, we define two norms:
\begin{equation}\label{defn-u-N-1}
\norm{\bm u}_{\mathcal{G}[P]} = \left(\sum_{i\in\mathcal{G}[[N],P,\bm u]}u_i^2\right)^{1/2}~~~\text{and}~~~\norm{\bm u}_{\mathcal{U}[P]}=\E\left(\sum_{i\in\mathcal{U}[[N],P]} u_i^2\right)^{1/2}
\end{equation}
where, $\mathcal{G}[[N],P,\bm u]$, $\mathcal{U}[P]$ are defined in~\Cref{def:indexset} and $\mathbb{E}(\cdot)$ is the expectation wrt scheme ${\mathcal U}$. \Cref{prop:norm-equiv} links these norms to the Euclidean norm:
\begin{proposition} \label{prop:norm-equiv} {For $\bm u\in\R^N$ and the norms defined in~\eqref{defn-u-N-1}}, the following statement holds:
$$\norm{\bm{ u}}^2\geq \norm{\bm{ u}}_{\mathcal{G}[P]}^2\geq \frac{P}{N}\norm{\bm{ u}}^2=\norm{\bm{ u}}_{\mathcal{U}[P]}^2.$$
\end{proposition}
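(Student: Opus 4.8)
The plan is to handle the three relations separately, from the easiest to the one requiring a (mild) averaging argument. For the first inequality $\norm{\bm\theta}^2 \geq \norm{\bm\theta}_{\mathcal{G}[P]}^2$, simply note that $\norm{\bm\theta}_{\mathcal{G}[P]}^2 = \sum_{i \in \mathcal{G}[[N],P,\bm\theta]} \theta_i^2$ is a sum of $P$ of the $N$ nonnegative quantities $\theta_1^2,\dots,\theta_N^2$, hence is bounded above by their total sum $\norm{\bm\theta}^2$.

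For the final equality, I would unpack $\norm{\bm\theta}_{\mathcal{U}[P]}^2 = \E\big[\sum_{i\in\mathcal{U}[[N],P]}\theta_i^2\big]$ (with the expectation over the uniform choice of a size-$P$ subset of $[N]$). Writing $\sum_{i\in\mathcal{U}[[N],P]}\theta_i^2 = \sum_{i=1}^N \mathbf{1}\{i \in \mathcal{U}[[N],P]\}\,\theta_i^2$ and taking expectations, linearity gives $\norm{\bm\theta}_{\mathcal{U}[P]}^2 = \sum_{i=1}^N \p(i\in\mathcal{U}[[N],P])\,\theta_i^2$. Since each fixed index lies in a uniformly random $P$-subset with probability $P/N$, this equals $\tfrac{P}{N}\sum_{i=1}^N\theta_i^2 = \tfrac{P}{N}\norm{\bm\theta}^2$.

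For the middle inequality $\norm{\bm\theta}_{\mathcal{G}[P]}^2 \geq \tfrac{P}{N}\norm{\bm\theta}^2$, the cleanest route is to observe that, because all summands are nonnegative, the greedy top-$P$ set achieves the maximum: $\norm{\bm\theta}_{\mathcal{G}[P]}^2 = \max_{S \subseteq [N],\, |S| = P} \sum_{i\in S}\theta_i^2$. A maximum over a finite family is at least the average over that family, in particular at least the expectation under the uniform distribution on $P$-subsets, which by the previous paragraph equals $\tfrac{P}{N}\norm{\bm\theta}^2 = \norm{\bm\theta}_{\mathcal{U}[P]}^2$. (Alternatively, one can sort $\theta_{(1)}^2\geq\cdots\geq\theta_{(N)}^2$ and note that the average of the $P$ largest terms is at least the average of all $N$ terms.) Chaining the three bounds yields the claimed string of inequalities. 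There is no real obstacle here — the only point to state carefully is that nonnegativity of $\theta_i^2$ makes $\mathcal{G}[[N],P,\bm\theta]$ an actual maximizer over $P$-subsets, which is what links $\mathcal{G}[P]$ to the uniform-sampling quantity.
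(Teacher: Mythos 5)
Your proof is correct and follows essentially the same route as the paper: the first inequality is the trivial subset-sum bound, the equality is the $P/N$ inclusion-probability computation (which the paper simply declares "easy to see"), and the middle inequality comes from comparing the greedy maximum to the uniform average. The only cosmetic difference is that the paper realizes the "max $\geq$ average" step by writing $\norm{\bm{\theta}}_{\mathcal{G}[P]}^2$ as a fractional LP over $\{\bm\pi:\sum_\omega\pi_\omega\leq P,\,0\leq\pi_\omega\leq1\}$ and plugging in the feasible point $\pi_\omega=P/N$, whereas you average directly over the discrete family of $P$-subsets — both instantiate the same idea.
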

\begin{proof}[Proof of Proposition~\ref{prop:norm-equiv}]
{For $\bm u\in\R^N$,} we see that $\norm{\bm{ u}}_{\mathcal{U}[P]}^2=(P/N)\norm{\bm{ u}}^2.$
Notice that
\begin{align*}
    \norm{\bm{ u}}_{\mathcal{G}[P]}^2~=~\max_{\bm{\pi}}\sum_{\omega\in[N]}\pi_\omega| u_\omega|^2~~~\text{s.t.}~~~\sum_{\omega\in[N]}\pi_\omega \leq  P, ~~~ 0\leq \pi_\omega\leq 1,\forall\omega.
\end{align*}

Since $\pi_\omega = {P}/{N}~\forall \omega$, is  feasible for the above problem, it follows that $$\norm{\bm{ u}}_{\mathcal{G}[P]}^2\geq \frac{P}{N}\norm{\bm{ u}}^2=\norm{\bm{ u}}_{\mathcal{U}[P]}^2.$$
Equality above is attained if and only if $ u_\omega = C$ $\forall \omega$ for some $C$. Furthermore, we note that
 $$\norm{\bm{ u}}_{\mathcal{G}[P]}^2=\sum_{\omega\in\mathcal{G}[[N],P]}| u_\omega|^2\leq \sum_{\omega\in[N]}| u_\omega|^2=\norm{\bm{ u}}^2,$$
and this equality is attained if and only if $ u_\omega =0$ for all $\omega\not\in \mathcal{G}[[N],P]$, i.e. the $N-P$ smallest values of $| u_\omega|$ are 0. 
\end{proof}

\begin{proof}[{Proof of \Cref{lem:outer-conv-lem2-main}}]
We divide the proof into 5 parts depending upon the 5 rules.

\medskip

\noindent \textbf{Rule 1: Greedy within each Block.} ($\delta_1(\bm\theta,\{\Omega_i\}) =\cup_{i=1}^n\mathcal{G}[\Omega_i,P,\bm\theta] $, $\alpha_{\{1\}} = \frac{n-1}{P}$, and $\beta_{\{1\}}=1$.) For this selection rule, we have
\begin{align*}
    \norm{\bm{\theta}}_{\{1\}}^2&=
    \sum_{i=1}^n\norm{\bm{\theta}_{\Omega_i}}_{\mathcal{G}[P]}^2.
\end{align*}
It is easy to see that $\|{\bm{\theta}}\|_{\{1\}}$ is a norm. It follows from \Cref{prop:norm-equiv} that 
$$\norm{\bm{\theta}}^2=\sum_{i=1}^n\norm{\bm{\theta}_{\Omega_i}}^2\geq \sum_{i=1}^n \norm{\bm{\theta}_{\Omega_i}}_{\mathcal{G}[P]}^2\geq \sum_{i=1}^n\frac{P}{n-1}\norm{\bm{\theta}_{\Omega_i}}^2=\frac{P}{n-1}\norm{\bm{\theta}}^2.$$
Therefore, we have $\alpha_{\{1\}}=\frac{n-1}{P}$, and $\beta_{\{1\}}=1.$

\medskip

\noindent \textbf{Rule 2: Random.}  ($\delta_2(\bm\theta,\{\Omega_i\})=\mathcal{U}[\Omega,K]$,  $\alpha_{\{2\}}=\frac{n(n-1)}{K}$ and $\beta_{\{2\}}=\frac{n(n-1)}{K}$).

For this selection rule, we have
$$\norm{\bm{\theta}}_{\{2\}}^2=\norm{\bm{\theta}}_{\mathcal{U}[K]}^2=\frac{K}{n(n-1)}\norm{\bm{\theta}}^2.$$
Thus, in this case the norm-equivalence constants are $\alpha_{\{2\}}=\beta_{\{2\}}=\frac{n(n-1)}{K}$.

\medskip

\noindent \textbf{Rule 3: Random within each Block.} ($\delta_3(\bm\theta,\{\Omega_i\}) = \cup_{i=1}^n\mathcal{U}[\Omega_i,P]$, $\alpha_{\{3\}}=\frac{n-1}{P}$, and $\beta_{\{3\}}=\frac{n-1}{P}$.) For this selection rule, we have
$$\norm{\bm{\theta}}_{\{3\}}^2=\sum_{i=1}^n\norm{\bm{\theta}_{\Omega_i}}_{\mathcal{U}[P]}^2 =\sum_{i=1}^n\frac{P}{n-1}\norm{\bm{\theta}_{\Omega_i}}^2=\frac{P}{n-1}\norm{\bm{\theta}}^2.$$
Hence, it follows  that $\alpha_{\{3\}}=\beta_{\{3\}}=\frac{n-1}{P}.$

\medskip 

\noindent \textbf{Rule 4: Random then Greedy.} ($\delta_4(\bm\theta,\{\Omega_i\})=\mathcal{G}[\mathcal{U}[\Omega,M],K,\bm\theta]$, $\alpha_{\{4\}}=\frac{n(n-1)}{K}$, and $\beta_{\{4\}}=\frac{n(n-1)}{M}$.) We adapt the proof of~\cite{lu2018randomized} to show that under this selection rule
$$\norm{\bm{\theta}}_{\{4\}}^2=\sum_{l=1}^{n(n-1)}\pi(l)|\theta_{(l)}|^2,$$
where $|\theta_{(l)}|$ is the $l$-th largest value in $\{|\theta_\omega|\}_{\omega\in \Omega}$ and $\pi(l)$ is given by
$$\pi(l)=\frac{M}{n(n-1)}\sum_{k=1}^{K}\frac{\binom{l-1}{k-1}\binom{n(n-1)-l}{M-k}}{\binom{n(n-1)-1}{M-1}}.$$
Here, by convention, we define $\binom{N}{\alpha}=0$ if $\alpha<0$ or $\alpha>N$.

Let $\pi(l)$ be the probability that $|\theta_{(l)}|$ is selected. Since the subsample is selected uniformly at random, it suffices to count the number of combinations that include $|\theta_{(l)}|$ and those in which $|\theta_{(l)}|$ ranks among the top $K$ values. This is equivalent to choosing $k-1(\leq K-1)$ elements from $\{|\theta_{(s)}|\}_{s\leq l-1}$, selecting the element $|\theta_{(l)}|$ and then choosing the remaining $(M-k)$ elements from the rest. Therefore, the number of such combinations is 
$$N(l)=\sum_{k=1}^{K}\binom{l-1}{k-1}\binom{n(n-1)-l}{M-k}.$$
Thus, $$\pi(l)=\frac{N(l)}{\binom{n(n-1)}{M}}=\frac{M}{n(n-1)}\sum_{k=1}^{K}\frac{\binom{l-1}{k-1}\binom{n(n-1)-l}{M-k}}{\binom{n(n-1)-1}{M-1}}.$$
Notice that when $l\leq K$  (i.e., $l-1\leq K-1$), then
$$N(l)=\sum_{k=1}^{l}\binom{l-1}{k-1}\binom{n(n-1)-l}{M-k}=\binom{n(n-1)-1}{M-1},$$
and thus $\pi(l)=\frac{M}{n(n-1)}.$ 

When $l\geq n(n-1)-(M-K)+1$ and $M-k\geq M-1-K>n(n-1)-l$, then $N(l)=0$ and thus $\pi(l)=0$. 

As each element appears in the same number of combinations of size $M$ (in the random selection step), and the greedy selection step favors the larger one, we have the following ordering: $\pi(1)\geq \ldots\geq \pi(n(n-1))$. Therefore, $\norm{\bm{\theta}}_{\{4\}}$ is a norm.

Since $\pi(l)=\E[\mathbb{I}_{\{(l)\in \mathcal{G}[\mathcal{U}[\Omega,M],K]\}}]$, it follows that
$$\sum_{l=1}^{n(n-1)}\pi(l)=\E\left[\sum_{l=1}^{n(n-1)}\mathbb{I}_{\{(l)\in \mathcal{G}[\mathcal{U}[\Omega,M],K]\}}\right]=K.$$

Then, we have
\begin{equation} \label{norm-rule4-1}
    \norm{\bm{\theta}}_{\{4\}}^2=\sum_{l=1}^{n(n-1)}\pi(l)|\theta_{(l)}|^2\geq \sum_{l=1}^{n(n-1)}\frac{K}{n(n-1)}|\theta_{(l)}|^2=\frac{K}{n(n-1)}\norm{\bm{\theta}}^2.
    \end{equation}
On the other hand, since $\pi(l)\leq \pi(1)=\frac{M}{n(n-1)}$, it follows that
\begin{equation} \label{norm-rule4-2}
\norm{\bm{\theta}}_{\{4\}}^2=\sum_{l=1}^{n(n-1)}\pi(l)|\theta_{(l)}|^2\leq \sum_{l=1}^{n(n-1)}\frac{M}{n(n-1)}|\theta_{(l)}|^2=\frac{M}{n(n-1)}\norm{\bm{\theta}}^2.
\end{equation}
Hence from~\eqref{norm-rule4-1} and~\eqref{norm-rule4-2}, we obtain $\alpha_{\{4\}}=\frac{n(n-1)}{K}$ and $\beta_{\{4\}}=\frac{n(n-1)}{M}$.

\medskip

\noindent \textbf{Rule 5: Random Blocks then Greedy within each Block.}\\ ($\delta_5(\bm\theta,\{\Omega_i\})=\cup_{i\in\mathcal{U}[[n],G]}\mathcal{G}[\Omega_i,P,\bm\theta]$, $\alpha_{\{5\}}=\frac{n(n-1)}{GP}$, and $\beta_{\{5\}}=\frac{n}{G}$.)

By the selection rule, we have
$$\norm{\bm{\theta}}_{\{5\}}^2=\frac{G}{n}\sum_{i=1}^n\norm{\bm{\theta}_{\Omega_i}}_{\mathcal{G}[P]}^2=\frac{G}{n}\norm{\bm{\theta}}_{\{1\}}^2$$
with $\alpha_{\{5\}}=\frac{n}{G}\alpha_{\{1\}}=\frac{n(n-1)}{GP}$ and $\beta_{\{5\}}=\frac{n}{G}\beta_{\{1\}}=\frac{n}{G}.$

\end{proof}

\subsection{Auxiliary lemmas for the proof of \Cref{thm:outer-conv}}\label{app:lemma-5-helper-thm}
Here we present \Cref{lem:conv-proof-geq,lem:outer-conv-lem1} that will be used for the proof of \Cref{thm:outer-conv}. 

\begin{lemma}[Hoffman's Lemma \cite{hoffman1952approximate}] 
Let $\Lambda =\{\bm{\lambda}:\bm{D\lambda}=\bm{s},\bm{\lambda}\leq \M{0}\}$. There is a constant $\mu_{\bm D}>0$ that depends only on $\bm D$ such that for any $\bm{\lambda}\leq \M{0}$, there exists an $\bm{\lambda}_0\in \Lambda$ with
$$\norm{\bm{D\lambda}-\bm{s}}^2\geq \mu_{\bm D}\norm{\bm{\lambda}-\bm{\lambda}_0}^2.$$
The constant $\mu_{\bm D}$ is called the Hoffman's constant associated with $\bm D$.
\end{lemma}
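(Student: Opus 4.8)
The plan is to construct the required point $\bm\lambda_0$ as the Euclidean projection of $\bm\lambda$ onto $\Lambda$ and to read off the constant $\mu_{\bm D}$ from the conditioning of the submatrices of $\bm D$. Throughout I assume $\Lambda\neq\varnothing$ (otherwise the statement is vacuous); since $\Lambda$ is a nonempty closed convex, indeed polyhedral, set, the projection $\bm\lambda_0=\Pi_\Lambda(\bm\lambda)$ exists and is unique. If $\bm D\bm\lambda=\bm s$ then $\bm\lambda\in\Lambda$, $\bm\lambda_0=\bm\lambda$ and there is nothing to prove, so assume the residual $\bm r:=\bm D\bm\lambda-\bm s\neq\bm{0}$. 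The core idea is that, because $\Lambda$ is polyhedral, the optimality condition for the projection, $\bm\lambda-\bm\lambda_0\in N_\Lambda(\bm\lambda_0)$, expresses $\bm\lambda-\bm\lambda_0$ through a multiplier vector $\bm u$ attached to the equality block $\bm D$, and the whole lemma reduces to bounding $\norm{\bm u}$ by $\norm{\bm\lambda-\bm\lambda_0}$ with a constant depending only on $\bm D$.

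First I would compute the normal cone. Since $\Lambda=\{\bm\mu:\bm D\bm\mu=\bm s,\ \bm\mu\leq\bm{0}\}$ is polyhedral, its normal cone at $\bm\lambda_0$ is generated by the equality rows together with the active sign constraints:
$$N_\Lambda(\bm\lambda_0)=\Big\{\bm D^\top\bm u+\textstyle\sum_{i\in A}v_i\bm e_i:\ \bm u\ \text{free},\ v_i\geq 0\Big\},\qquad A:=\{i:(\bm\lambda_0)_i=0\}.$$
Writing $\bm\lambda-\bm\lambda_0=\bm D^\top\bm u+\bm v$ with $\bm v=\sum_{i\in A}v_i\bm e_i$ and $v_i\geq 0$, I would then expand
$$\norm{\bm\lambda-\bm\lambda_0}^2=(\bm\lambda-\bm\lambda_0)^\top(\bm D^\top\bm u+\bm v)=\big(\bm D(\bm\lambda-\bm\lambda_0)\big)^\top\bm u+\sum_{i\in A}\big(\lambda_i-(\bm\lambda_0)_i\big)v_i.$$
Here $\bm D(\bm\lambda-\bm\lambda_0)=\bm D\bm\lambda-\bm s=\bm r$ (using $\bm D\bm\lambda_0=\bm s$), and each term in the second sum equals $\lambda_i v_i\leq 0$, because $i\in A$ forces $(\bm\lambda_0)_i=0$, while $\lambda_i\leq 0$ and $v_i\geq 0$. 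Hence $\norm{\bm\lambda-\bm\lambda_0}^2\leq \bm r^\top\bm u\leq\norm{\bm r}\,\norm{\bm u}$.

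It remains to bound $\norm{\bm u}$, and this is the step I expect to be the main obstacle, since $\bm u$ is not unique and a naive choice could depend on $\bm\lambda$ and $\bm s$. I would resolve it with a Carath\'eodory argument for convex cones: $\bm\lambda-\bm\lambda_0$ is a nonnegative combination of the generators $\{\pm\bm d_j\}_{j}\cup\{\bm e_i\}_{i\in A}$, where $\bm d_j$ are the rows of $\bm D$, so it can be represented using a \textit{linearly independent} sub-collection, assembled into a matrix $\bm G_J$, with coefficient vector $\bm c=\bm G_J^{+}(\bm\lambda-\bm\lambda_0)$ satisfying $\norm{\bm c}\leq\norm{\bm\lambda-\bm\lambda_0}/\sigma_{\min}(\bm G_J)$. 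Reading $\bm u$ off the coefficients attached to the $\pm\bm d_j$ generators gives $\norm{\bm u}\leq\norm{\bm\lambda-\bm\lambda_0}/\gamma$, where $\gamma:=\min_J\sigma_{\min}(\bm G_J)>0$ is the minimum over the finitely many full-rank sub-collections and hence depends only on $\bm D$. Combining with the previous display yields $\norm{\bm\lambda-\bm\lambda_0}\leq\norm{\bm r}/\gamma$, i.e. $\norm{\bm D\bm\lambda-\bm s}^2\geq\gamma^2\norm{\bm\lambda-\bm\lambda_0}^2$, so $\mu_{\bm D}:=\gamma^2$ works. The delicate points to get right are the exact description of the polyhedral normal cone (no constraint qualification is needed precisely because $\Lambda$ is polyhedral) and the finiteness and linear-independence bookkeeping that makes $\gamma$ a function of $\bm D$ alone rather than of the particular $\bm\lambda$ or $\bm s$.
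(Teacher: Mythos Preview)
Your proof is correct, but note that the paper does not supply its own proof of this lemma: it is quoted verbatim as Hoffman's classical result \cite{hoffman1952approximate} and used as a black box (via \cite{karimi2016linear}) to obtain the subsequent sufficient-decrease inequality. So there is no ``paper's approach'' to compare against here.

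Since you have gone beyond what the paper does, a brief remark on your argument: the projection-plus-normal-cone route you take is a standard modern way to recover Hoffman's bound, and the steps are sound. The only place worth tightening in a final write-up is the Carath\'eodory step. The multiplier pair $(\bm u,\bm v)$ coming from $N_\Lambda(\bm\lambda_0)$ is not unique, and the inequality $\norm{\bm\lambda-\bm\lambda_0}^2\leq \bm r^\top\bm u$ you derived holds for \emph{every} admissible representation; the Carath\'eodory reduction then furnishes one particular admissible $(\bm u',\bm v')$ with $\norm{\bm u'}\leq\norm{\bm\lambda-\bm\lambda_0}/\gamma$, and you should apply the inequality to that specific $\bm u'$. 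You do this implicitly, but it helps to say explicitly that the new representation still has $\bm v'$ supported on $A$ with nonnegative entries (which follows because Carath\'eodory selects generators from the original finite set). Finally, when you define $\gamma=\min_J\sigma_{\min}(\bm G_J)$, the index set $A$ varies with $\bm\lambda_0$, so to make $\gamma$ depend only on $\bm D$ you should take the minimum over linearly independent sub-collections of the \emph{full} generator set $\{\pm\bm d_j\}_j\cup\{\bm e_i\}_{i\in[N]}$ rather than only those with $i\in A$; this is a finite set determined by $\bm D$, so $\gamma>0$ and the conclusion stands.
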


Let $\bm{D}^\star=\begin{bmatrix}\bm A^\top\\\bm{B}^\top/\sqrt{\rho}\end{bmatrix}$ be the coefficient matrix corresponding to set of optimal solutions $\B\Lambda^\star$ with $W=\Omega$ in \eqref{eqn:opt-dual-poly-W}. We get the following sufficient descent lemma according to \cite{karimi2016linear}:
\begin{lemma}[\cite{karimi2016linear} Case 3, Page 18]\label{lem:conv-proof-geq}
For any $\bm{\lambda}\leq \M{0}$ and optimal dual solution $\bm{\lambda}^\star$ of \eqref{eqn:main-dual}, we have
\begin{equation}\label{eqn:conv-proof-geq}
    L(\bm{\lambda}^\star)\geq L(\bm{\lambda})-\frac{1}{2\mu_{\bm{D}^\star}}\mathcal{D}(\bm{\lambda},\nabla L(\bm{\lambda}),\mu_{\bm{D}^\star}),
\end{equation}
where $\mu_{\bm{D}^\star}>0$ is the Hoffman's constant associated with $\bm{D}^\star$.
\end{lemma}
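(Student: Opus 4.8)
The plan is to recognize this inequality as the instance of the proximal Polyak--{\L}ojasiewicz inequality (Case~3 of~\cite{karimi2016linear}) that holds whenever a strongly convex function is composed with a linear map and minimized over a polyhedron --- precisely the situation of~\eqref{eqn:main-dual}. Recall that for a feasible $\bm\lambda\le 0$, a direction $\bm g$, and a parameter $\alpha>0$,
\[
\mathcal{D}(\bm\lambda,\bm g,\alpha)=-2\alpha\,\min_{\bm\lambda'\le 0}\Big[\langle\bm g,\bm\lambda'-\bm\lambda\rangle+\tfrac{\alpha}{2}\norm{\bm\lambda'-\bm\lambda}^2\Big],
\]
so that $-\tfrac{1}{2\alpha}\mathcal{D}(\bm\lambda,\nabla L(\bm\lambda),\alpha)$ equals the displayed minimum. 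It therefore suffices to plug in \textit{one} well-chosen feasible point $\bm\lambda'$ and show that the resulting bracket is at most $L^\star-L(\bm\lambda)$ when $\alpha=\mu_{\bm D^\star}$. The right choice is an exact dual optimum supplied by Hoffman's Lemma.

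First I would record the quadratic structure of $L$. Writing $\bm D^\star$ for the matrix stacking $\bm A^\top$ over $\bm B^\top/\sqrt\rho$ and $\widetilde{\bm y}=(\bm y;\bm 0)\in\R^{n+nd}$, a direct computation gives $\tfrac{1}{2\rho}\bm\lambda^\top(\rho\bm A\bm A^\top+\bm B\bm B^\top)\bm\lambda=\tfrac12\norm{\bm D^\star\bm\lambda}^2$ and $\bm y^\top\bm A^\top\bm\lambda=\langle\widetilde{\bm y},\bm D^\star\bm\lambda\rangle$, hence $L(\bm\lambda)=\tfrac12\norm{\bm D^\star\bm\lambda}^2-\langle\widetilde{\bm y},\bm D^\star\bm\lambda\rangle$ is a convex quadratic with Hessian $(\bm D^\star)^\top\bm D^\star$, so its second-order Taylor expansion is exact:
\[
L(\bm\mu)-L(\bm\lambda)-\langle\nabla L(\bm\lambda),\bm\mu-\bm\lambda\rangle=\tfrac12\norm{\bm D^\star(\bm\mu-\bm\lambda)}^2\qquad\text{for all }\bm\mu,\bm\lambda .
\]
Next, by \Cref{prop:relaxed-dual-char} with $W=\Omega$, the optimal set of~\eqref{eqn:main-dual} is the polyhedron $\Lambda^\star=\{\bm\lambda:\bm D^\star\bm\lambda=\bm s^\star,\ \bm\lambda\le 0\}$ for $\bm s^\star=(\bm y-\bm\phi^\star;\,-\sqrt\rho\,\bm\xi^\star)$ as in~\eqref{eqn:opt-dual-poly-W}, and every element of $\Lambda^\star$ attains value $L^\star$. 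I then invoke Hoffman's Lemma with $\bm D=\bm D^\star$, $\bm s=\bm s^\star$: for the fixed $\bm\lambda\le 0$ there exists $\bm\lambda^\star\in\Lambda^\star$ with $\norm{\bm D^\star\bm\lambda-\bm s^\star}^2\ge\mu_{\bm D^\star}\norm{\bm\lambda-\bm\lambda^\star}^2$; since $\bm D^\star\bm\lambda^\star=\bm s^\star$ this reads $\norm{\bm D^\star(\bm\lambda-\bm\lambda^\star)}^2\ge\mu_{\bm D^\star}\norm{\bm\lambda-\bm\lambda^\star}^2$.

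Finally I would take $\bm\lambda'=\bm\lambda^\star$ (feasible, since $\bm\lambda^\star\le 0$) and $\alpha=\mu_{\bm D^\star}$ in the bracket defining $\mathcal{D}$, and substitute $\langle\nabla L(\bm\lambda),\bm\lambda^\star-\bm\lambda\rangle=L(\bm\lambda^\star)-L(\bm\lambda)-\tfrac12\norm{\bm D^\star(\bm\lambda^\star-\bm\lambda)}^2$ from the exact Taylor identity. The bracket then becomes
\[
L(\bm\lambda^\star)-L(\bm\lambda)-\tfrac12\norm{\bm D^\star(\bm\lambda^\star-\bm\lambda)}^2+\tfrac{\mu_{\bm D^\star}}{2}\norm{\bm\lambda^\star-\bm\lambda}^2\ \le\ L(\bm\lambda^\star)-L(\bm\lambda)=L^\star-L(\bm\lambda),
\]
the inequality being exactly the Hoffman bound. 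Hence $-\tfrac{1}{2\mu_{\bm D^\star}}\mathcal{D}(\bm\lambda,\nabla L(\bm\lambda),\mu_{\bm D^\star})\le L^\star-L(\bm\lambda)$, which rearranges to the claimed $L(\bm\lambda^\star)\ge L(\bm\lambda)-\tfrac{1}{2\mu_{\bm D^\star}}\mathcal{D}(\bm\lambda,\nabla L(\bm\lambda),\mu_{\bm D^\star})$.

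I do not anticipate a serious obstacle here: the only non-elementary ingredient is Hoffman's Lemma, which is quoted above, and the remaining work is the bookkeeping needed to cast~\eqref{eqn:main-dual} into the ``strongly convex function composed with a linear map over a polyhedron'' form of~\cite{karimi2016linear}. The one point requiring care is keeping the sign conventions aligned --- minimization of $L$, the $-2\alpha$ in the definition of $\mathcal{D}$, and the direction of the Hoffman inequality --- and pinning the step-size parameter to be exactly $\mu_{\bm D^\star}$ so that the $-\tfrac12\norm{\bm D^\star(\cdot)}^2$ and $+\tfrac{\mu_{\bm D^\star}}{2}\norm{\cdot}^2$ terms cancel.
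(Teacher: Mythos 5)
Your proof is correct. The paper does not actually prove this lemma---it imports it wholesale from Karimi et al.\ (Case~3, p.~18)---and what you have written is precisely the argument behind that citation, specialized to \eqref{eqn:main-dual}: write $L(\bm\lambda)=\tfrac12\norm{\bm D^\star\bm\lambda}^2-\langle\widetilde{\bm y},\bm D^\star\bm\lambda\rangle$ so the quadratic Taylor expansion is exact, identify the optimal set as the polyhedron $\{\bm D^\star\bm\lambda=\bm s^\star,\ \bm\lambda\le 0\}$ from \Cref{prop:relaxed-dual-char}, and use Hoffman's bound to dominate $\tfrac{\mu_{\bm D^\star}}{2}\norm{\bm\lambda^\star-\bm\lambda}^2$ by $\tfrac12\norm{\bm D^\star(\bm\lambda^\star-\bm\lambda)}^2$ after plugging the Hoffman-supplied optimum into the minimum defining $\mathcal D$. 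All signs and the choice $\alpha=\mu_{\bm D^\star}$ check out, so this is a valid self-contained derivation of the result the paper only cites.
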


\begin{definition}\label{defn-defn-mathcalD}
Define a function $\mathcal{D}(\cdot,\cdot,\cdot):\mathbb{R}_-^N\times\mathbb{R}^N\times \mathbb{R}_{++}\to \mathbb{R}_+$ as
\begin{equation*}
    \mathcal{D}(\bm{\lambda},\bm{\theta},\mu)=-2\mu \min_{\bm{\lambda}'\leq 0}\left\{\langle\bm{\theta},\bm{\lambda}'-\bm{\lambda}\rangle+\frac{\mu}{2}\norm{\bm{\lambda}'-\bm{\lambda}}^2\right\}.
\end{equation*}
\end{definition}

\Cref{lem:outer-conv-lem1} makes use of the following useful proposition.
\begin{proposition}\label{prop:d-func}
Define $d(\cdot,\cdot,\cdot):\mathbb{R}_-\times \mathbb{R}_+\times \mathbb{R}_{++}\to\mathbb{R}$ as
$$d(\lambda,\theta, \mu)=-2\mu\min_{\lambda'\leq 0}\left\{\theta(\lambda'-\lambda)+\frac{\mu}{2}(\lambda'-\lambda)^2\right\}.$$
We have the following:
\begin{itemize}
    \item[(a)] if $\lambda=0$, then for any $\mu>0$, $d(\lambda,\theta,\mu)=\max\{\theta,0\}^2;$
    \item[(b)] if $\lambda\theta = 0 $ with $\lambda \leq 0$ and $\theta \leq 0$, then for any $\mu>0 $, $d(\lambda,\theta,\mu)=0$;
    \item[(c)] if $\lambda\leq 0$, then for any $\mu>0$, $d(\lambda,\theta,\mu)\geq 0$.
\end{itemize}
\end{proposition}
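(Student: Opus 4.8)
The plan is to collapse everything to a single explicit one–dimensional quadratic minimization. Substituting $u = \lambda' - \lambda$, the constraint $\lambda' \le 0$ becomes $u \le -\lambda$, and the inner problem defining $d(\lambda,\theta,\mu)$ is $\min_{u \le -\lambda} g(u)$ with $g(u) := \theta u + \tfrac{\mu}{2} u^2$. Since $\mu > 0$, $g$ is a strictly convex parabola whose unconstrained minimizer is $u^\star = -\theta/\mu$; the constrained minimizer is therefore $u^\star$ if $u^\star \le -\lambda$ and $-\lambda$ otherwise, with optimal value $g(u^\star) = -\theta^2/(2\mu)$ in the first regime and $g(-\lambda) = -\theta\lambda + \tfrac{\mu}{2}\lambda^2$ in the second. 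Multiplying the optimal value by $-2\mu$ then gives a closed form for $d$, and all three claims follow by specialization. Note also that $u = 0$ (i.e. $\lambda' = \lambda$) is feasible whenever $\lambda \le 0$, since then $-\lambda \ge 0$; this elementary fact is what drives parts (b) and (c).

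For (a), with $\lambda = 0$ the feasible set is $u \le 0$. If $\theta > 0$, then $u^\star = -\theta/\mu < 0$ is feasible, the optimal value is $-\theta^2/(2\mu)$, and hence $d = \theta^2$; if $\theta \le 0$, the minimum over $u \le 0$ is attained at $u = 0$ with value $0$, so $d = 0$. In either regime $d(\lambda,\theta,\mu) = \max\{\theta,0\}^2$. For (b), the condition $\lambda\theta = 0$ with $\lambda \le 0,\ \theta \le 0$ forces $\lambda = 0$ or $\theta = 0$; in the first subcase part (a) already gives $d = \max\{\theta,0\}^2 = 0$, while in the second subcase $g(u) = \tfrac{\mu}{2}u^2$ is minimized at the feasible point $u = 0$ with value $0$, so again $d = 0$.

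For (c), it suffices to observe that $u = 0$ is feasible whenever $\lambda \le 0$ and $g(0) = 0$, so $\min_{u \le -\lambda} g(u) \le 0$; multiplying by the positive constant $-2\mu$ yields $d(\lambda,\theta,\mu) \ge 0$. I do not expect any real obstacle here: the argument is entirely a finite case analysis of a one–sided quadratic minimization, and the only thing that needs care is bookkeeping the sign conditions that decide whether the unconstrained minimizer $-\theta/\mu$ is admissible, together with the repeated use of the feasibility of $u=0$ under $\lambda \le 0$.
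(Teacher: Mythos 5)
Your proof is correct and follows essentially the same route as the paper's: an explicit case analysis of the one-sided quadratic minimization, using feasibility of $\lambda'=\lambda$ (your $u=0$) for parts (b) and (c), with your version merely making the closed form more explicit via the substitution $u=\lambda'-\lambda$. One trivial slip: in part (c) you call $-2\mu$ a \emph{positive} constant, whereas it is negative; the conclusion still holds because the inner minimum is $\le 0$ and multiplying by the negative factor $-2\mu$ reverses the inequality.
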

\begin{proof}
\noindent Part (a):~~If $\lambda =0$, then $$d(\lambda,\theta,\mu)=-2\mu\min_{\lambda'\leq 0}\left\{\theta \lambda'+\frac{\mu}{2}(\lambda')^2\right\}=\left\{\begin{array}{ll}0&\text{ if }\theta \leq 0\\
\theta^2&\text{ if }\theta >0
\end{array}\right.,$$
i.e. $d(\lambda,\theta,\mu)=\max\{\theta,0\}^2$.

\noindent Part (b):~~If $\lambda\theta =0$ with $\lambda=0$ and $\theta\leq 0$, then $d(\lambda,\theta,\mu)=0$ follows from Part (a).

If $\lambda \theta= 0$ with $\lambda <0$, then $\theta = 0$, so $d(\lambda,\theta,\mu)=-2\mu\min_{\lambda'\leq 0}\{\frac{\mu}{2}(\lambda'-\lambda)^2\}=0.$

\noindent Part (c): If $\lambda \leq 0$, then $\lambda'=\lambda\leq 0$ is always feasible, so $d(\lambda,\theta,\mu)\geq 0.$
\end{proof}

\begin{lemma}\label{lem:outer-conv-lem1}
Suppose there exists $\alpha\geq1$ such that for any $m$ and $W^0$ the following
\begin{equation}\label{assume-lemma=proof11}
    \alpha \E_{\eta_{m+1}}\left[\sum_{\omega\in W^{m+1}\backslash W^m}\max\{\nabla_{\lambda_\omega}L(\bm{\lambda}^m),0\}^2\right]\geq \sum_{\omega\in \Omega\backslash W^m}\max\{\nabla_{\lambda_\omega}L(\bm{\lambda}^m),0\}^2
    \end{equation}
holds almost surely, where $\E_{\eta_{m+1}}$ denotes expectation over all the random sources in $(m+1)$-th iteration (conditional on events up to iteration $m$). 
Then, the following holds:
$$\E_{\eta_{m+1}}[L(\bm{\lambda}^{m+1})-L^\star]\leq \left(1-\frac{\mu_{\bm D^\star}}{\alpha\sigma}\right)[L(\bm{\lambda}^m)-L^\star].$$
\end{lemma}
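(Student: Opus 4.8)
The plan is to establish a one-step contraction in conditional expectation; iterating it gives the stated bound. Fix iteration $m$ and condition on all randomness up to its start, so that $W^m$ and $\bm{\lambda}^m=\bm{\lambda}^{[W^m]}$ are deterministic and the only remaining randomness is $\eta_{m+1}$, which produces $W^{m+1}\supseteq W^m$; here $\bm{\lambda}^{m+1}$ is the iterate returned by Step~1 applied to $(D_{W^{m+1}})$, warm-started at $\bm{\lambda}^m$. The backbone of the argument is a sufficient-decrease inequality that holds whether Step~1 solves $(D_{W^{m+1}})$ exactly or performs at least one proximal-gradient step with the conservative step-size $1/\sigma$ (admissible since $\sigma_{W^{m+1}}\le\sigma$ by \Cref{prop:relaxed-dual-char}):
\begin{equation}\label{eqn:suff-dec-plan}
L(\bm{\lambda}^{m+1})\ \le\ L(\bm{\lambda}^m)-\frac{1}{2\sigma}\sum_{\omega\in W^{m+1}} d\!\left(\lambda^m_\omega,\ \nabla_{\lambda_\omega}L(\bm{\lambda}^m),\ \sigma\right),
\end{equation}
where $d$ is the scalar function of \Cref{prop:d-func} and, since the minimization defining $\mathcal{D}$ separates over coordinates, $\mathcal{D}(\bm{\lambda},\bm{\theta},\mu)=\sum_\omega d(\lambda_\omega,\theta_\omega,\mu)$. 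For the exact case I would take $\bar{\bm{\lambda}}\le\M{0}$ to be a single projected-gradient step from $\bm{\lambda}^m$ that updates only the coordinates indexed by $W^{m+1}$: since $\bm{\lambda}^m$ is supported on $W^m\subseteq W^{m+1}$, $\bar{\bm{\lambda}}$ remains supported on $W^{m+1}$ and is feasible for $(D_{W^{m+1}})$, so $L(\bm{\lambda}^{m+1})=L_{W^{m+1}}^\star\le L(\bar{\bm{\lambda}})$; evaluating the $\sigma$-smoothness quadratic majorant of $L$ at $\bar{\bm{\lambda}}$, whose moved coordinates are exactly the minimizers of that majorant, gives the right-hand side of \eqref{eqn:suff-dec-plan}. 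For the inexact case the bound follows because the first proximal-gradient step on $L_{W^{m+1}}$ from $\bm{\lambda}^m$ already achieves this decrease (using that $\nabla L_{W^{m+1}}$ evaluated at the embedded point is the restriction of $\nabla L(\bm{\lambda}^m)$ to $W^{m+1}$) and proximal-gradient is monotone, so later steps only help.

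Next I would convert the sum over $W^{m+1}$ in \eqref{eqn:suff-dec-plan} into the full $\mathcal{D}$. Split $W^{m+1}=W^m\cup(W^{m+1}\setminus W^m)$; for every $\omega\in\Omega\setminus W^m$ one has $\lambda^m_\omega=0$, so \Cref{prop:d-func}(a) gives $d(\lambda^m_\omega,\nabla_{\lambda_\omega}L(\bm{\lambda}^m),\sigma)=\max\{\nabla_{\lambda_\omega}L(\bm{\lambda}^m),0\}^2$. Taking $\E_{\eta_{m+1}}$ in \eqref{eqn:suff-dec-plan} (the $W^m$-indexed terms are deterministic at this stage), applying the hypothesis \eqref{assume-lemma=proof11} to $\E_{\eta_{m+1}}[\sum_{\omega\in W^{m+1}\setminus W^m}\max\{\nabla_{\lambda_\omega}L(\bm{\lambda}^m),0\}^2]$, rewriting the resulting sum over $\Omega\setminus W^m$ once more via \Cref{prop:d-func}(a), and using $\sum_{\omega\in W^m} d\ge\frac{1}{\alpha}\sum_{\omega\in W^m} d$ (valid because $\alpha\ge1$ and $d\ge0$ by \Cref{prop:d-func}(c)), I would obtain
\begin{equation}\label{eqn:sum-to-omega-plan}
\E_{\eta_{m+1}}\!\Big[\sum_{\omega\in W^{m+1}} d\big(\lambda^m_\omega,\nabla_{\lambda_\omega}L(\bm{\lambda}^m),\sigma\big)\Big]\ \ge\ \frac{1}{\alpha}\,\mathcal{D}\big(\bm{\lambda}^m,\nabla L(\bm{\lambda}^m),\sigma\big).
\end{equation}
Combining \eqref{eqn:suff-dec-plan} and \eqref{eqn:sum-to-omega-plan} then yields $\E_{\eta_{m+1}}[L(\bm{\lambda}^{m+1})]\le L(\bm{\lambda}^m)-\tfrac{1}{2\sigma\alpha}\mathcal{D}(\bm{\lambda}^m,\nabla L(\bm{\lambda}^m),\sigma)$.

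To close, I would invoke the proximal Polyak--{\L}ojasiewicz bound \Cref{lem:conv-proof-geq}, which gives $\mathcal{D}(\bm{\lambda}^m,\nabla L(\bm{\lambda}^m),\mu_{\bm{D}^\star})\ge 2\mu_{\bm{D}^\star}\,(L(\bm{\lambda}^m)-L^\star)$. Since $\mu\mapsto d(\lambda,\theta,\mu)$ is nondecreasing for $\lambda\le0$ (a short one-variable computation in the spirit of \Cref{prop:d-func}), $\mathcal{D}(\bm{\lambda}^m,\nabla L(\bm{\lambda}^m),\cdot)$ is nondecreasing, and since $\mu_{\bm{D}^\star}\le\sigma$ (as holds for any Polyak--{\L}ojasiewicz / Hoffman constant, by smoothness of $L$), we get $\mathcal{D}(\bm{\lambda}^m,\nabla L(\bm{\lambda}^m),\sigma)\ge 2\mu_{\bm{D}^\star}(L(\bm{\lambda}^m)-L^\star)$. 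Substituting into the previous display and subtracting $L^\star$ from both sides gives $\E_{\eta_{m+1}}[L(\bm{\lambda}^{m+1})-L^\star]\le\big(1-\tfrac{\mu_{\bm{D}^\star}}{\alpha\sigma}\big)[L(\bm{\lambda}^m)-L^\star]$, which is the claim.

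The step I expect to be the main obstacle is the sufficient-decrease inequality \eqref{eqn:suff-dec-plan}: it must be proved uniformly for both the exact and the inexact sub-problem solves, with the parameter $\mu$ in $\mathcal{D}(\cdot,\cdot,\mu)$ pinned to the global smoothness constant $\sigma$ so that it meshes cleanly with \Cref{lem:conv-proof-geq}; the accompanying monotonicity bookkeeping for $\mathcal{D}$ in its last argument, together with $\mu_{\bm{D}^\star}\le\sigma$, is where care is needed. Once this deterministic skeleton is in place, the probabilistic part---taking $\E_{\eta_{m+1}}$ and applying \eqref{assume-lemma=proof11} coordinatewise---is routine.
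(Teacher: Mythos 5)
Your proposal is correct and follows essentially the same route as the paper's proof: a sufficient-decrease bound of the form $L(\bm{\lambda}^{m+1})\le L(\bm{\lambda}^m)-\tfrac{1}{2\sigma}\sum_{\omega\in W^{m+1}}d(\lambda^m_\omega,\nabla_{\lambda_\omega}L(\bm{\lambda}^m),\sigma)$ valid for both exact and inexact sub-problem solves, the hypothesis \eqref{assume-lemma=proof11} to pass from $W^{m+1}\setminus W^m$ to $\Omega\setminus W^m$ (with $\alpha\ge1$ and $d\ge0$ absorbing the $W^m$ terms), and the Hoffman-based bound of \Cref{lem:conv-proof-geq} to close the loop. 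The only difference is organizational: you treat the exact and inexact cases uniformly and make explicit the monotonicity of $d(\lambda,\theta,\cdot)$ together with $\mu_{\bm{D}^\star}\le\sigma$, which the paper's Step~3 uses implicitly when matching $d(\cdot,\cdot,\sigma)$ against $d(\cdot,\cdot,\mu_{\bm{D}^\star})$.
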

\begin{proof}[Proof of Lemma~\ref{lem:outer-conv-lem1}]
We make use of the following notation in the proof: 
$$\text{$\lambda_\omega^m$ is the $\omega$-th component of $\bm{\lambda}^m$},~~~~~\nabla^m_\omega = \nabla_{\lambda_\omega}L(\bm{\lambda}^m)~~~\text{and}~~~~\tilde{\nabla}^m_\omega= \max\{\nabla^m_\omega,0\}.$$
The proof has three steps, and we consider both exact and inexact cases in each step.

\noindent {\bf Step 1:} In the first part, we will show that
\begin{align}\label{main-results-step11}
  \nonumber & L(\bm{\lambda}^m)-  L(\bm{\lambda}^\star)~~~~~~~~~~~~~~~~~~~~~~~~~~~~~~~~~~~~~~~~~~~~~~~~~~~~~~~~~~~~~ \\
   \leq & \left\{\begin{array}{ll}\displaystyle\frac{1}{2\mu_{\bm D^\star}}\sum_{\omega\in \Omega\backslash W^m}(\tilde{\nabla}_\omega^m)^2&\text{ (exact case)}\\
\displaystyle\frac{1}{2\mu_{\bm D^\star}}\left[\sum_{\omega\in W^m}d(\lambda^m_\omega,\nabla^m_\omega,\mu_{\bm D^\star})+\sum_{\omega\in \Omega\backslash W^m}(\tilde{\nabla}_\omega^m)^2\right]&\text{ (inexact case)}.
\end{array}\right. 
\end{align}

It follows from \Cref{lem:conv-proof-geq} that
\begin{equation}
    L(\bm{\lambda}^\star)\geq L(\bm{\lambda}^m)-\frac{1}{2\mu_{\bm D^\star}}\mathcal{D}(\bm{\lambda}^m,\nabla L(\bm{\lambda}^m),\mu_{\bm D^\star}).
\end{equation}
By the definitions of ${\mathcal D}(\cdot)$ (\Cref{defn-defn-mathcalD}) and $d(\cdot)$ (\Cref{prop:d-func}),
we have 
\begin{align}
    \mathcal{D}(\bm{\lambda}^m,\nabla L(\bm{\lambda}^m),\mu_{\bm D^\star})&=\sum_{\omega\in \Omega}d(\lambda^m_\omega,\nabla^m_\omega,\mu_{\bm D^\star})\nonumber\\
    &=\sum_{\omega\in W^m}d(\lambda^m_\omega,\nabla^m_\omega,\mu_{\bm D^\star})+\sum_{\omega\in \Omega\backslash W^m}d(\lambda^m_\omega,\nabla^m_\omega,\mu_{\bm D^\star})
\end{align}
\subparagraph{Inexact Case:} By the definition of $\bm{\lambda}^{W^m}$, we know that $\lambda_\omega^m=0$ for $\omega\not\in W^m$. Then it follows from \Cref{prop:d-func} that 
\begin{align}\label{step1-inexact-case-11}
    \mathcal{D}(\bm{\lambda}^m,\nabla L(\bm{\lambda}^m),\mu_{\bm D^\star})
    =\sum_{\omega\in W^m}d(\lambda^m_\omega,\nabla^m_\omega,\mu_{\bm D^\star})+\sum_{\omega\in \Omega\backslash W^m}(\tilde{\nabla}_\omega^m)^2
\end{align}
\subparagraph{Exact Case:} By the primal feasibility, dual feasibility and complementary slackness of $\bm{\lambda}_{W^m}^\star$, we have $\lambda_\omega^m\nabla_\omega^m=0$ with $\lambda_\omega^m,\nabla_\omega^m\leq 0$ for $\omega\in W^m$. Combining this with $\lambda_\omega^m=0$ for $\omega\not\in W^m$, by \Cref{prop:d-func}, we have
\begin{equation}\label{step1-exact-case-11}
\mathcal{D}(\bm{\lambda}^m,\nabla L(\bm{\lambda}^m),\mu_{\bm D^\star})=\sum_{\omega\in \Omega\backslash W^m}(\tilde{\nabla}_\omega^m)^2.
\end{equation}
 The conclusions from~\eqref{step1-inexact-case-11} and~\eqref{step1-exact-case-11} lead to~\eqref{main-results-step11}.

\noindent {\bf Step 2:} In the second step, we will show that 
\begin{align}\label{main-results-step22}
\nonumber    &L(\bm{\lambda}^{m+1})-L(\bm{\lambda}^m)~~~~~~~~~~~~~~~~~~~~~~~~~~~~~~~ \\
    \leq& \left\{\begin{array}{ll}\displaystyle-\frac{1}{2\sigma}\sum_{\omega\in W^{m+1}\backslash W^m}(\tilde{\nabla}_\omega^m)^2&\text{ (exact case)}\\
\displaystyle-\frac{1}{2\sigma}\left[\sum_{\omega\in W^m}d(\lambda^m_\omega,\nabla^m_\omega,\sigma)+\sum_{\omega\in W^{m+1}\backslash W^m}(\tilde{\nabla}_\omega^m)^2\right]&\text{ (inexact case)}
\end{array}\right.
\end{align}

\subparagraph{Exact Case:} Let $\Lambda^{m+1}=\{\bm{\lambda}\in\mathbb{R}^{n(n-1)}:\lambda_\omega=0,\forall \omega\in\Omega\backslash W^{m+1}\}.$ 
Recall that $\bm{\lambda}^{m+1}$ minimizes $L(\B\lambda)$ over $\Lambda^{m+1}$.
Therefore, we have the following:
\begin{align}
    L(\bm{\lambda}^{m+1})&=\min_{\bm{\lambda}\in \Lambda^{m+1}}L(\bm{\lambda})\nonumber\\
    &\leq \min_{\bm{\lambda}\in \Lambda^{m+1}}  \left\{L(\bm{\lambda}^m)+\langle \nabla L(\bm{\lambda}^m),\bm{\lambda}-\bm{\lambda}^m\rangle +\frac{\sigma}{2}\norm{\bm{\lambda}-\bm{\lambda}^m}^2 \right\} \nonumber\\
    &=L(\bm{\lambda}^m)-\frac{1}{2\sigma}\sum_{\omega\in W^m}d(\lambda^m_\omega,\nabla^m_\omega,\sigma)-\frac{1}{2\sigma}\sum_{\omega\in W^{m+1}\backslash W^m}d(\lambda^m_\omega,\nabla^m_\omega,\sigma)\nonumber\\
    &=L(\bm{\lambda}^m)-\frac{1}{2\sigma}\sum_{\omega\in W^{m+1}\backslash W^m}(\tilde{\nabla}_\omega^m)^2 \label{exact-case-step2-11}
\end{align}
where the first inequality uses $\sigma$-smoothness of $L$; the last line follows from an argument similar to~\eqref{step1-exact-case-11} where we use \Cref{prop:d-func} and complementary slackness.

\subparagraph{Inexact Case:} Here we take one (or more) projected gradient step(s) to partially minimize the reduced dual. Let $\bm{\lambda}^{m+\frac{1}{2}}$ be obtained after one projected gradient descent step from $\bm{\lambda}^m$. Then we have
\begin{align}
    L(\bm{\lambda}^{m+1})&\leq L(\bm{\lambda}^{m+\frac{1}{2}})\nonumber\\
    &\leq L(\bm{\lambda}^m)+\langle \nabla L(\bm{\lambda}^m),\bm{\lambda}^{m+\frac{1}{2}}-\bm{\lambda}^m\rangle +\frac{\sigma}{2}\norm{\bm{\lambda}^{m+\frac{1}{2}}-\bm{\lambda}^m}^2\nonumber\\
    &=L(\bm{\lambda}^m)-\frac{1}{2\sigma}\sum_{\omega\in W^{m+1}}d(\lambda^m_\omega,\nabla^m_\omega,\sigma)\nonumber\\
    &= L(\bm{\lambda}^m)-\frac{1}{2\sigma}\sum_{\omega\in W^m}d(\lambda^m_\omega,\nabla^m_\omega,\sigma)-\frac{1}{2\sigma}\sum_{\omega\in W^{m+1}\backslash W^m}(\tilde{\nabla}_\omega^m)^2,\label{inexact-case-step2-11}
\end{align}
where the first inequality follows from the descent property of projected gradient descent; the second inequality uses $\sigma$-smoothness of $L$; and the last line follows from \Cref{prop:d-func}.

Finally, the result in~\eqref{main-results-step22} follows from~\eqref{exact-case-step2-11} and~\eqref{inexact-case-step2-11}

\noindent {\bf Step 3:} For the third step, we will show that the following holds
\begin{equation}\label{main-result-step3}
   \sigma\alpha \E_{\eta_{m+1}}[L(\bm{\lambda}^{m+1})-L(\bm{\lambda}^m)]\leq -\mu_{\bm D^\star}[ L(\bm{\lambda}^m) -L(\bm{\lambda}^\star)].
\end{equation}
\subparagraph{Exact Case:} For this case, we have the following chain of inequalities:
\begin{align}
     -2\sigma\alpha \E_{\eta_{m+1}}[L(\bm{\lambda}^{m+1})-L(\bm{\lambda}^m)]&
    \geq \alpha\E_{\eta_{m+1}}\sum_{\omega\in W^{m+1}\backslash W^m}(\tilde{\nabla}_\omega^m)^2\nonumber\\
    &\geq \sum_{\omega\in \Omega\backslash W^m}(\tilde{\nabla}_\omega^m)^2\nonumber\\
    &\geq 2\mu_{\bm D^\star}[ L(\bm{\lambda}^m) -L(\bm{\lambda}^\star)], \label{step3-exact-case-11}
\end{align}
where the first inequality uses \eqref{main-results-step22}; the second inequality is the assumption \eqref{assume-lemma=proof11}; and the last line uses \eqref{main-results-step11}.

\subparagraph{Inexact Case:} Since $\lambda_\omega^m\leq 0$, we know $d(\lambda_\omega^m,\nabla_\omega^m,\sigma)\geq 0$ by \Cref{prop:d-func}. Using the fact that $\alpha\geq 1$, we have the following:
\begin{align}
     -2\sigma\alpha \E_{\eta_{m+1}}[L(\bm{\lambda}^{m+1})-L(\bm{\lambda}^m)]&
    \geq \alpha\sum_{\omega\in W^m}d(\lambda^m_\omega,\nabla^m_\omega,\sigma)+  \alpha\E_{\eta_{m+1}}\sum_{\omega\in W^{m+1}\backslash W^m}(\tilde{\nabla}_\omega^m)^2\nonumber\\
    &\geq \sum_{\omega\in W^m}d(\lambda^m_\omega,\nabla^m_\omega,\sigma) + \sum_{\omega\in \Omega\backslash W^m}(\tilde{\nabla}_\omega^m)^2\nonumber\\
    &\geq 2\mu_{\bm D^\star}[ L(\bm{\lambda}^m) -L(\bm{\lambda}^\star)],\label{step3-inexact-case-11}
\end{align}
where the first inequality uses \eqref{main-results-step22}; the second inequality uses assumption \eqref{assume-lemma=proof11}, $\alpha\geq 1$ and $d(\lambda_\omega^m,\nabla_\omega^m,\sigma)\geq 0$; and the last line uses \eqref{main-results-step11}.

The statement in~\eqref{main-result-step3} follows from~\eqref{step3-exact-case-11} and~\eqref{step3-inexact-case-11}.

Finally, we complete the proof by using~\eqref{main-result-step3} and observing that:
\begin{align*}
    \E_{\eta_{m+1}}[L(\bm{\lambda}^{m+1})-L^\star]=&  \E_{\eta_{m+1}}[L(\bm{\lambda}^{m+1})-L(\bm{\lambda}^m)]+ L(\bm{\lambda}^m)-L^\star \\
    \leq& \left(1-\frac{\mu_{\bm D^\star}}{\alpha\sigma}\right)[L(\bm{\lambda}^m)-L^\star].
\end{align*}
\end{proof}

\subsection{Proof of \Cref{thm:outer-conv}}\label{subsec:proof-outer-thm}
The proof of \Cref{thm:outer-conv} uses Lemmas~\ref{lem:outer-conv-lem2-main} and \ref{lem:outer-conv-lem1}.
\begin{proof}[\textbf{Proof of \Cref{thm:outer-conv}}]
Recall that $\lambda_\omega^m$ is the $\omega$-th component of $\bm{\lambda}^m$,  $\nabla^m_\omega = \nabla_{\lambda_\omega}L(\bm{\lambda}^m)$, and $\tilde{\nabla}^m_\omega= \max\{\nabla^m_\omega,0\}$. Now let $\{\Omega_i\}$ be one of the partitions $\{\Omega_{i\cdot}\}$ or $\{\Omega_{\cdot i}\}$, and $W_i^m$ be the corresponding partition for $W^m$. Let $\Delta=W^{m+1}\backslash W^m$.
Using this notation, the condition of \Cref{lem:outer-conv-lem1} reduces to 
\begin{equation}\label{lemma5-cond-simplified1}
\alpha \E_{\eta_{m+1}}[\sum_{\omega\in\Delta}(\tilde{\nabla}_\omega^m)^2]\geq \sum_{\omega \in \Omega\backslash W^m}(\tilde{\nabla}_\omega^m)^2.
    \end{equation}
    
We organize the proof into four steps: (1) we construct a random vector $\bm{g}\in\mathbb{R}^{n(n-1)}$ from $\bm{\nabla}^m = \nabla L(\bm{\lambda}^m)$; (2) we then show $\sum_{\omega\in\Omega\backslash W^m}(\tilde{\nabla}_\omega^m)^2$ equals $\norm{\bm{g}}^2$; 
(3) we relate $\E_{\eta_{m+1}}[\sum_{\omega\in\Delta}(\tilde{\nabla}_\omega^m)^2]$ to $\norm{\bm{g}}_{\{\ell\}}^2$; and (4) finally, we apply \Cref{lem:outer-conv-lem2-main,lem:outer-conv-lem1} to complete the proof.

\medskip

\noindent \textbf{Step 1:} Define each entry $g_\omega$ of $\bm{g}$ as follows:
\begin{equation}\label{defn-g-deltaw}
    g_\omega=\left\{\begin{array}{ll} \tilde{\nabla}_\omega^m&\text{ if }\omega\in \Omega\backslash W^m\\
    0&\text{ if }\omega\in  W^m
    \end{array}\right..
\end{equation}
Notice $\bm{g}$ is a random vector 
depending upon random sources from iterations $1, \ldots, m$.

\medskip

\noindent\textbf{Step 2:} By the definition of $\bm{g}$, we have
$\sum_{\omega\in\Omega\backslash W^m}(\tilde{\nabla}_\omega^m)^2 = \norm{\bm{g}}_2^2.$

\medskip

\noindent \textbf{Step 3:} Recall that $\Delta'$ denotes the set of selected pairs as per a rule, and we consider a 
subset 
$\Delta=\{\omega\in\Delta':\tilde{\nabla}_\omega^m=\nabla_\omega^m > 0\}$---that is, $\tilde{\nabla}_\omega^m=0$ for any $\omega\in\Delta'\backslash \Delta$. Thus,
$$\E_{\eta_{m+1}}\left[\sum_{\omega\in \Delta}(\tilde{\nabla}_\omega^m)^2\right]=\E_{\eta_{m+1}}\left[\sum_{\omega\in \Delta'}(\tilde{\nabla}_\omega^m)^2\right].$$

Note that $\Delta'=\delta_{\ell}(\tilde{\bm{\nabla}}^m_{\Omega\backslash W^m},\{\Omega_i\setminus W_i^m\})$, and let $\Delta''= \delta_{\ell}(\bm g,\{\Omega_i\})$.
Notice that $\M{g}$ has more zero coordinates compared to $\tilde{\bm{\nabla}}^m$ (see~\eqref{defn-g-deltaw}).
Thus, we have
$$\E_{\eta_{m+1}}\left[\sum_{\omega\in \Delta}(\tilde{\nabla}_\omega^m)^2\right]=\E_{\eta_{m+1}}\left[\sum_{\omega\in \Delta'}(\tilde{\nabla}_\omega^m)^2\right]\geq \E_{\eta_{m+1}}\left[\sum_{\omega\in \Delta''}(g_\omega)^2\right]=\norm{\bm{g}}_{\{\ell\}}^2,$$
where the last equality follows from \Cref{def:rule-norms}.

\medskip

\noindent \textbf{Step 4:} From Step 2, Step 3 and \Cref{lem:outer-conv-lem2-main}, we arrive at~\eqref{lemma5-cond-simplified1}. 
Therefore, it follows from \Cref{lem:outer-conv-lem1} that
$$\E_{\eta_{m+1}}[L(\bm{\lambda}^m)-L^\star]\leq\left(1-\frac{\mu_{\bm D^\star}}{\alpha_{\{\ell\}}\sigma}\right)[L(\bm{\lambda}^m)-L^\star]$$
and (using tower property of expectation) we arrive at the conclusion of the theorem:
$$\E[L(\bm{\lambda}^m)-L^\star]\leq\left(1-\frac{\mu_{\bm D^\star}}{\alpha_{\{\ell\}}\sigma}\right)^m[L(\bm{\lambda}^0)-L^\star].$$

\end{proof}
\begin{remark}\label{remark:FISTA-LBFGS}
Both accelerated gradient methods (APG) and L-BFGS can be used to solve the reduced problems to optimality---so, the proof for the exact subproblem optimization (in \Cref{thm:outer-conv}) for these cases will be the same as that for PGD, The proof for the inexact subproblem optimization for PGD uses only the sufficient decrease condition of the first PGD update and the fact that PGD is a descent algorithm. Since APG with adaptive restart (function scheme) \cite{o2015adaptive} and L-BFGS \cite{liu1989limited} are descent algorithms, the theory presented above, goes through as long as the progress made by APG or L-BFGS is at least as large as the progress made by the first step of APG. To this end, for L-BFGS, when setting the initial inverse Hessian approximation as the identity matrix, i.e. $\bm H_0=\bm I$, the first step of L-BFGS is exactly PGD update, so the theory works for L-BFGS. For APG with restarts, if we perform a PGD update preceding the APG updates, then the theory will go through as well.
\end{remark}

\section{Conclusion}

We present large-scale algorithms for subgradient regularized multivariate convex regression, a problem of key importance in nonparametric regression with shape constraints. We present an active set type method on the smooth dual problem~\eqref{eqn:main-dual}: we allow for inexact optimization of the reduced sub-problems and use randomized rules for augmenting the current active set. We establish novel computational guarantees for our proposed algorithms. For large-scale instances, our approach appears to be more suited to obtain low-moderate accuracy solutions.
Exploiting problem-structure, our open source toolkit can deliver approximate solutions for instances with $n \approx 10^5$ and $d=10$ (a QP with 10 billion decision variables) within a few minutes on a modest computer. Our approach appears to work well as long as the tuning parameter $\rho$ is not too small, while still corresponding to good statistical models. 
For solving~\eqref{eqn:main-primal}
with $\rho=0$ (or numerically very close to zero), our approach would not apply and we recommend using the cutting plane procedure of~\cite{bertsimas2021sparse} though this does not have computational guarantees.

\subsection*{Acknowledgements}
We would like to thank Haihao Lu for discussions in the early stages of the work. {{We also thank the referees and the Associate Editor for their comments that helped improve the paper.}} 
The authors were partially supported by ONR-N000142112841, ONR-N000141812298, 
ONR-N000142212665, NSF-IIS-1718258 and MIT-IBM Watson AI Lab.

\bibliographystyle{siamplain}

\bibliography{references}

\begin{appendix}
\section*{Appendix and Supplementary Material}
\section{Additional Technical Details}
\subsection{Examples of unavoidable factor $\alpha_{\{\ell\}}$}\label{subsubsec:unavoidable}
For simplicity, we consider an unconstrained problem with objective $f(x)=x^\top Hx$, where $H=\mathrm{diag}(\sigma,1,1,\ldots, 1 )\in\R^{p\times p}$ with $\sigma>1$. Then, $f$ is $1$-strongly convex and $\sigma$-smooth, and thus the step size will be $1/\sigma$.

\noindent {\it{Gradient Descent:}} Starting from any $x^k=(x_i^k:i\in[p])$, the gradient step yields
$$x^{k+1}=(0,\gamma x^k_2,\ldots, \gamma x^k_m),$$
where $\gamma =1-\frac1\sigma$. Therefore, for $k\geq 1$, $x^{k+1}_1=x^k_1=0$, and $f(x^{k+1})= \gamma^2f(x^k)$. Since the minimum objective value $f^*=0$, we have 
\begin{equation}\label{GD-rate}
    f(x^{k+1})-f^*=(1-(1-\gamma^2))(f(x^k)-f^*)
    \end{equation}

\noindent {\it{Our algorithm (inexact optimization)}:} We now consider the inexact active set algorithm (denoted by ASGD). Suppose, we start with initial active set $\mathcal{W}^0 = \{1\}$ and $x^0=(1,1,\ldots,1)$. We take a gradient step over $\mathcal{W}^0$ to obtain $x^1=(0,1,\ldots,1)$. Starting with $\mathcal{W}^{k-1}$ and $x^k$, we randomly select $i_k$ from $[p]\backslash\mathcal{W}^{k-1}$, and augment $\mathcal{W}^{k-1}$ with $i_k$, i.e. $\mathcal{W}^k=\mathcal{W}^{k-1}\cup\{i_k\}$. Then, we take a gradient step from $x^k$ over $\mathcal{W}^k$ to obtain $x^{k+1}$, it is easy to see that
$$x^{k+1}_i=\gamma x^k_i,\forall i\in\mathcal{W}^k,\quad \text{and}\quad x^{k+1}_i=x^k_i,\forall i\notin \mathcal{W}^k.$$
Moreover, by induction, we have that $x^{k+1}_1= 0$, $x^{k+1}$ contains $(p-k-1)$ coordinates of 1, and $k$ coordinates that are $\gamma, \gamma^2, \ldots,\gamma^k$. Therefore,
$$f(x^k)= (p-k-1)+\sum_{j=1}^k\gamma^{2k}=(p-k-1)+\frac{\gamma^2-\gamma^{2k+2}}{1-\gamma^2}.$$ Since $f^*=0$, we have for $k\geq 1$
$$\frac{f(x^k)-f(x^{k+1})}{f(x^k)}=\frac{1-\gamma^{2k+2}}{p-k-1+\frac{\gamma^2-\gamma^{2k+2}}{1-\gamma^2}}\geq \frac{1-\gamma^2}{p-1}.$$
The above inequality is tight, and it indicates
\begin{equation}\label{ASGD-rate}
    f(x^{k+1})-f^*\leq \left(1-\frac{1-\gamma^2}{p-1}\right)(f(x^k)-f^*)
\end{equation}
Compared to the rate of gradient descent in \eqref{GD-rate}, the above rate \eqref{ASGD-rate} has an additional $O(\frac{1}{p})$ factor. This is similar to having the factor $\alpha_{\{\ell\}}$ for the augmentation rules in Theorem~\ref{thm:outer-conv}. 

\section{Additional Experiment Details}\label{app:more-expts}

\subsection{Real dataset details}\label{app:real-data-details} 
{\bf RD1, RD2:} These are taken from~\url{https://archive.ics.uci.edu/ml/datasets/Gas+Turbine+CO+and+NOx+Emission+Data+Set}, and were studied in~\cite{kaya2019predicting}. The dataset has $36{,}733$ samples. 
RD1 has CO as response with features: AP, AFDP, GTEP, CDP. RD2 has NOx as response with features: AT, AP, AH, AFDP. We apply the log-transform on the responses. Each training set has $n=10,000$ and $d=4$,
with the remaining set aside for testing.\\
{\bf RD3, RD4:} These two datasets are taken from~\url{https://archive.ics.uci.edu/ml/datasets/Beijing+Multi-Site+Air-Quality+Data}; and have been studied in~\cite{zhang2017cautionary}. {This dataset has approximately $420,768$ samples; we select SO2, NO2, CO, O3 as features.} We apply the log-transform on all features and both responses PM2.5 and PM10. Each training dataset has $n=10,000$ and $d=4$ (remaining samples used for testing).\\
{\bf RD5:} This dataset is taken from~\url{https://archive.ics.uci.edu/ml/datasets/Beijing+PM2.5+Data}; and has been studied in~\cite{liang2015assessing}. 
We use PM2.5 as response with features: DEWP, TEMP, PRES, Iws. We use a log-transform of all features and response; and consider a training set with $n=10,000$ and $d=4$. \\
{\bf RD6:} This dataset is taken from~\url{https://archive.ics.uci.edu/ml/datasets/combined+cycle+power+plant}; and has been studied in~\cite{kaya2012local,tufekci2014prediction}. 
We consider a training set with $n=5,000$ and $d=4$. \\
{\bf RD7:} This dataset taken from~\cite{mekaroonreung2012estimating}, is available at \url{http://ampd.epa.gov/ampd/}, and has been recently used in~\cite{mazumder2018computational,bertsimas2021sparse}. 
We apply the log-transformation on all features and the response, and consider a training set with $n=30,000$ and $d=4$.\\
\noindent {\bf RD8:} This is taken from~\cite{ramsey2012statistical} and is the \code{ex1029} dataset available from \code{R} package \code{Sleuth2}---see also~\cite{hannah2013multivariate,balazs2016convex}. 
We first winsorize the data by excluding the samples that have response with score $\geq 2$; and consider a training set with $n=15,000$ and $d=4$.
Following~\cite{balazs2016convex}, we apply the transformation $x\mapsto 1.2^x$ to the education variable.

\subsection{Algorithm Parameters}\label{subsec:algo-params}
The tolerance for violations of constraints is set as $10^{-4}$ ($10^{-8}$ for the second stage). For inexact optimization, the tolerance for the relative objective change is $10^{-6}$ and the maximum number of PGD iterations is taken to be 5. For exact optimization, the violation tolerance is $10^{-7}$, the maximum number of PGD iterations for the sub-problems is $3,000$ and the minimum number of PGD iterations for the sub-problems is $5$.

In Rule 2, we take $K=n$; in Rule 4, we take $M=4n$ and $K=n$. In Rule 1, we take $P=1$; in Rule 5, we take $G=n/4$, and $P=4$.

In the second stage, we apply the occasional rule 1/5 when: (i) the number of constraints added by random rules 2/4 is less than $0.005n$ for consecutive 5 iterations; or (ii) the number of PGD iterations in the subproblems 
is the minimum number $5$ for consecutive 5 iterations.

Scripts used to run the experiments containing all algorithm parameters can be found in our github repository.

\end{appendix}
\end{document}